\newcommand{\End}{\mathop{\mathrm{End}}}
\newcommand{\rad}{\mathop{\mathrm{rad}}\nolimits}
\newcommand{\Ind}{\mathop{\mathrm{Ind}}\nolimits}
\newcommand{\Coind}{\mathop{\mathrm{Coind}}\nolimits}
\newcommand{\rk}{\mathop{\mathrm{rk}}\nolimits}
\newcommand{\J}{\mathrel{\mathscr J}} 
\newcommand{\R}{\mathrel{\mathscr R}} 
\newcommand{\eL}{\mathrel{\mathscr L}} 
\newcommand{\inv}{^{-1}}
\newcommand{\p}{\varphi}
\newcommand{\ov}[1]{\ensuremath{\overline {#1}}}
\newcommand{\til}[1]{\ensuremath{\widetilde {#1}}}
\newcommand{\wh}{\widehat}
\newcommand{\Hom}{\mathop{\mathrm{Hom}}\nolimits}
\newcommand{\Ext}{\mathop{\mathrm{Ext}}\nolimits}
\newtheorem{Thm}{Theorem}[section]
\newtheorem{Prop}[Thm]{Proposition}
\newtheorem{Lemma}[Thm]{Lemma}
{\theoremstyle{definition}
}
{\theoremstyle{remark}
}
\newtheorem{Cor}[Thm]{Corollary}
{\theoremstyle{remark}
}
{\theoremstyle{remark}
}
\theoremstyle{remark}
\theoremstyle{remark}
\theoremstyle{remark}
\theoremstyle{remark}
\newtheorem*{Claim*}{Claim}}
\numberwithin{equation}{section}
\title[Topology and monoid representations~II]{Topology and monoid representations~II: left regular bands of groups and Hsiao's monoid of ordered $G$-partitions}
\author{Benjamin Steinberg}
\address[B.~Steinberg]{%
    Department of Mathematics\\
    City College of New York\\
    Convent Avenue at 138th Street\\
    New York, New York 10031\\
    USA}
\email{bsteinberg@ccny.cuny.edu}
\thanks{The author was supported by a PSC CUNY grant, a Simons Foundation Collaboration Grant, award number 849561, and the Australian Research Council Grant DP230103184.}
\date{\today}
\keywords{Representations of monoids, topology,  left duo monoids, left regular bands of groups}
\subjclass[2020]{20M25,20M30, 20M50, 16S37, 16G99, 05E10, 05E45}
\begin{document}

\begin{abstract}
The goal of this paper is to use topological methods to compute $\mathrm{Ext}$ between irreducible representations of  von Neumann regular monoids in which Green's $\mathscr L$- and $\mathscr J$-relations coincide (e.g., left regular bands).   Our results subsume those of S.~Margolis, F.~Saliola, and B.~Steinberg, \emph{Combinatorial topology and the global dimension of algebras arising
  in combinatorics}, J. Eur. Math. Soc. (JEMS), \textbf{17}, 3037--3080  (2015).

   Applications include computing $\Ext$ between arbitrary simple modules and computing a quiver presentation for the algebra of Hsiao's monoid of ordered $G$-partitions (connected to the Mantaci-Reutenauer descent algebra for the wreath product $G\wr S_n$).   We show that this algebra is Koszul, compute its Koszul dual and compute minimal projective resolutions of all the simple modules using topology.  More generally, these results work for CW left regular bands of abelian groups.    These results generalize the results of S.~Margolis, F.~V. Saliola, and B.~Steinberg.
\emph{Cell complexes, poset topology and the representation theory of
  algebras arising in algebraic combinatorics and discrete geometry}, Mem. Amer. Math. Soc., \textbf{274}, 1--135, (2021).
\end{abstract}

\maketitle
\section{Introduction}
  In~\cite{rrbg}, Margolis and the author computed $\Ext^1$ between simple modules for regular left duo monoids, that is, regular monoids for which each left ideal is two-sided.  These generalize left regular bands.  The representation theory of left regular bands had been used earlier to analyze a number of important Markov chains~\cite{BHR,DiaconisBrown1,Brown1,bjorner2} and is connected with Solomon's descent algebra~\cite{SolomonDescent,BidigareThesis,Brown2,SaliolaDescent}.  Hsiao had introduced in~\cite{Hsiao} a regular left duo monoid  $\Sigma_n^G$ associated to a finite group $G$, called the monoid of ordered $G$-partitions,  such that the algebra of $S_n$-invariants of $K\Sigma_n^G$ is anti-isomorphic to the Mantaci-Reutenauer descent algebra~\cite{MantReut} of the wreath product $G\wr S_n$ of $G$ with the symmetric group (a certain subalgebra of $K[G\wr S_n]$ that serves as a noncommutative character ring).  We computed the quiver of the complex algebra of Hsiao's monoid in~\cite{rrbg}.

  In this paper, we show how to compute arbitrary $\Ext$ between simple modules of regular left duo monoids using topological means.  The author had previously shown, in joint work with Margolis and Saliola,  that $\Ext$ between simple modules for  left regular bands can be computed as the homology of associated cell complexes and projective resolutions can be built from chain complexes of contractible CW complexes with a cellular action of the monoid~\cite{MSS,ourmemoirs}). Here we do the same for regular left duo monoids.    We then apply the theory to compute arbitrary $\Ext$ between simple modules for the algebra of Hsiao's monoid.  Moreover, we compute a quiver presentation of the algebra, show it is Koszul and compute its Koszul dual as the incidence algebra of a certain poset, in the case that $G$ is abelian.  Additionally, we give an explicit construction of the minimal projective resolutions of the simple modules via the topology of permutohedra.  The results hold more generally for left regular bands of abelian groups whose idempotents are a CW left regular band in the sense of~\cite{ourmemoirs} (i.e., the set of idempotents and all its contractions are face posets of regular CW complexes).
Finally, we compute explicit projective resolutions of all simple modules from the order complex of the poset of principal right ideals for von Neumann regular left duo monoids.

\subsection*{Historical note}
As the initial version of this paper~\cite{toprep} was being polished for submission to ArXiv, the paper~\cite{2023arXiv230614985B} appeared, which has some overlap with our results.  A complete set of orthogonal primitive idempotents for regular left duo monoids (called there left regular bands of groups) is produced in~\cite{2023arXiv230614985B}, something also done in our paper.  They also produce a complete set of orthogonal primitive idempotents for the Mantaci-Reutenauer descent algebra, something we do not do in this paper.

\section{Left duo and right semicentral monoids}\label{s:secduo}
The set of idempotents of a monoid $M$ will be denoted $E(M)$.
Let $M$ be a finite monoid with group of units $G$, and let $S=M\setminus G$ be the ideal of singular elements.  Then $\mathcal EM$ denotes the classifying space~\cite{Rosenberg} of the category with objects $M$ and arrows $M\times M$ where $(m,m')\colon mm'\to m$.  The composition is given by $(m,m')(mm',m'')=(m,m'm'')$ and the identity at $m$ is $(m,1)$.  Note that $\mathcal EM$ is a contractible free $M$-CW complex (in the sense of~\cite{TopFinite1}).  Then $S\mathcal EM$ is the subcomplex of $\mathcal EM$ consisting of those cells of the form $s\sigma$ with $s\in S$ and $\sigma\in \mathcal EM$.  This is an $M$-invariant subcomplex.  See~\cite{affinetoprep} for details. If $P$ is a poset, the \emph{order complex} $\Delta(P)$ of $P$ is the simplicial complex with vertex set $P$ whose simplices are formed by the finite chains in $P$.  If $M$ acts on $P$ by order preserving maps, then $M$ acts on $\Delta(P)$ by simplicial maps.

Let us establish some notation.  If $X$ is a right $M$-set, then $\Omega_M(X) = \{xM\mid x\in X\}$ is the poset of cyclic $M$-subsets of $X$.  We sometimes write  $\Omega(X)$ if $M$ is clear from context.  Notice that if $M$ is a finite monoid with singular ideal $S$, then $M$ acts on the left of $\Omega_M(S)$  by order preserving maps.  There is an $M$-equivariant cellular mapping $\Phi_S\colon S\mathcal EM\to \Delta(\Omega_M(S))$ that is the identity on vertices.  This mapping is a homotopy equivalence when $M$ is regular (cf.~\cite[Corollary~6.18]{MSS}).

 Following established terminology in ring theory, an idempotent $e$ is \emph{right semicentral} if $eM=eMe$ and is \emph{left semicentral} if $Me=eMe$.  We denote by $\psi\colon M\to G(M)$ the group completion homomorphism.  This is the universal map from $M$ into a group.  The group $G(M)$ is called the \emph{group completion} of $M$.  When $M$ is finite, $\psi$ is a surjective homomorphism.   Concretely, it can be described in the following way using a combination of~\cite[Chapter~8, Example~1.7]{Arbib} and Graham's theorem~\cite{Graham} (see also~\cite[Chapter~4.13]{qtheor}).    Choose an idempotent $e$ in the minimal ideal of $M$.  Then $eMe=G_e$ is a group (cf.~\cite[Appendix~A]{qtheor} or~\cite{Arbib}), and $G(M)\cong G_e/N$ where $N$ is the normal closure of the subgroup of $G_e$ consisting of those elements than can be written as a product of idempotents from $MeM$.  The universal map takes $m$ to $emeN$.

In analogy to standard terminology in ring theory, we say that $M$ is \emph{left duo} if $mM\subseteq Mm$ for all $m\in M$.  This property is equivalent to $Mm=MmM$ for each $m\in M$ and is also equivalent to each left ideal of $M$ being two-sided.  For example, a band (i.e., a monoid in which each element is idempotent) is left duo if and only if it is a left regular band (i.e., satisfies the identity $xyx=xy$).  Regular left duo monoids (and their dual, right duo monoids) were studied in~\cite{rrbg} under the incorrect name of left (right) regular bands of groups (which, defined correctly, form a proper subclass).  In~\cite{rrbg} a description of the quiver of a regular left duo monoid was given, that is, a computation of $\Ext^1$ between simples was performed.  Here we give a topological description of all $\Ext$ between simples that, even for $\Ext^1$, is more conceptual than that in~\cite{rrbg}.  For left regular bands, this reduces to the results of~\cite{MSS} (see also~\cite{ourmemoirs}).

Actually, the more natural class of monoids to consider consists of those monoids whose idempotent-generated principal left ideals are two-sided, that is, monoids $M$ for which $eM\subseteq Me$ for all idempotents $e\in E(M)$ or, equivalently, monoids for which $eM=eMe$ for each idempotent $e$.  Rings with this property are sometimes called right semicentral, and so we shall call these \emph{right semicentral} monoids; the dual property is called  left semicentral.  Note that a regular monoid is right semicentral if and only if it is left duo since each principal left ideal is generated by an idempotent in a regular monoid.  The class of finite right semicentral monoids is closed under finite direct products, taking submonoids and quotient monoids.  Note that $M$ is right semicentral if and only if $eme=em$ for every idempotent $e\in E(M)$ and $m\in M$, from which these facts easily follow.

The following elementary proposition provides some structural properties of right semicentral monoids that follow from standard finite semigroup theory. We provide a proof for completeness.

\begin{Prop}\label{p:completeness}
Let $M$ be a finite right semicentral monoid.
\begin{enumerate}
\item The idempotents $E(M)$ of $M$ form a left regular band.
\item If $e,f\in E(M)$, then $Me\cap Mf=Mef$.
\item The idempotent-generated principal left ideals form a lattice $\Lambda(M)$ with intersection as the meet.
\item If $e\in E(M)$ and $m\in M$, then $e\in MmM$ if and only if $Me\subseteq Mm$, if and only if $Me\subseteq Mm^{\omega}$  where $m^{\omega}$ denotes the unique idempotent positive power of $m$.
\item There is a surjective homomorphism $\sigma_M\colon M\to \Lambda(M)$ (with respect to meet) given by $\sigma_M(m) = Mm^{\omega}$.  Moreover, $\Lambda(M)\cong \Lambda(E(M))$ and $\sigma_M|_{E(M)}$ can be identified with $\sigma_{E(M)}\colon E(M)\to \Lambda(E(M))$.
\item For each $e\in E(M)$, $eM = eMe$, and hence there is a retraction $\rho_e\colon M\to eMe$ given by $\rho_e(m)=em$.  Moreover, $eMe$ is right semicentral.
\item If $e$ is an idempotent of the minimal ideal of $M$, then $G(M)\cong eMe$ and $\rho_e$ can be identified with the universal map $\psi\colon M\to G(M)$.
\item If $e,f\in E(M)$, then $eM=fM$ if and only if $eE(M)=fE(M)$, if and only if $e=f$.
\end{enumerate}
\end{Prop}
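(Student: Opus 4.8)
The whole proposition is organized around the single identity $eme=em$ (for $e\in E(M)$, $m\in M$) recorded just before the statement, supplemented by standard finite semigroup theory only for the parts involving $m^\omega$ and the minimal ideal. I would prove (1)--(3) first as pure band computations. For (1), the identity immediately gives $efe=ef$, which is precisely the left regular band law, and since it also gives $fef=fe$, the computation $(ef)(ef)=e(fef)=efe=ef$ shows that products of idempotents are idempotent; hence $E(M)$ is a left regular band. For (2), the inclusion $Mef\subseteq Me\cap Mf$ is immediate from $ef=efe\in Mf$, while any $x\in Me\cap Mf$ satisfies $x=xe=xf$, so $x=(xe)f=x(ef)\in Mef$. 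Part (3) is then formal: (2) shows the idempotent-generated principal left ideals are closed under intersection and have top $M=M1$, so this finite meet-semilattice is a lattice.

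I would next prove (6), since it feeds (4). The equality $eM=eMe$ is already recorded; the map $\rho_e(m)=em$ is a homomorphism because $\rho_e(m)\rho_e(n)=emen=(eme)n=emn=\rho_e(mn)$, it fixes $eMe$ pointwise, and right semicentrality is inherited by the submonoid $eMe$. For (4), the implications $Me\subseteq Mm^\omega\Rightarrow Me\subseteq Mm\Rightarrow e\in MmM$ are trivial because $m^\omega$ is a power of $m$. The content is $e\in MmM\Rightarrow Me\subseteq Mm^\omega$: writing $e=amb$ and applying the homomorphism $\rho_e$ shows $e$ lies in the two-sided ideal generated by $em$ in the finite monoid $eMe$, so $em$---and hence its power $em^\omega$---is $\mathscr J$-equivalent to the identity $e$ of $eMe$ and is therefore a unit there; if $v$ denotes the inverse of $em^\omega$, then $e=v(em^\omega)=vm^\omega\in Mm^\omega$, whence $Me\subseteq Mm^\omega$.

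Part (5) is where I expect the real difficulty. Using (2) and (4), the homomorphism property reduces to the identity $M(mn)^\omega=Mm^\omega n^\omega$. The inclusion $\subseteq$ follows from (4) applied to the idempotent $(mn)^\omega$, since $(mn)^\omega\in MmM\cap MnM$ forces $M(mn)^\omega\subseteq Mm^\omega$ and $M(mn)^\omega\subseteq Mn^\omega$, and the right-hand side equals $Mm^\omega n^\omega$ by (2). The reverse inclusion is, again by (4), equivalent to $m^\omega n^\omega\in M(mn)M$, and it is exactly here that genuine finite semigroup input (stability, together with the behaviour of $\omega$-powers under products) is required; this is the main obstacle of the proposition. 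Surjectivity of $\sigma_M$ then holds because $\sigma_M(e)=Me$ for idempotent $e$, and the identification $\Lambda(M)\cong\Lambda(E(M))$ with $\sigma_M|_{E(M)}=\sigma_{E(M)}$ follows since every member of $\Lambda(M)$ is idempotent-generated and (2) computes meets identically in $M$ and in $E(M)$.

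Finally, for (7), choosing $e$ in the minimal ideal makes $eMe=G_e$ a group, so the homomorphism $\rho_e$ of (6) surjects onto a group and factors through the universal map $\psi$. To see that this factorization is an isomorphism I would invoke the description $G(M)\cong G_e/N$ recalled before the proposition and check that $N$ is trivial here: by (1) any product of idempotents of $MeM$ is again idempotent, so if such a product lies in the group $G_e$ it must equal $e$; then $\psi(m)=eme=em=\rho_e(m)$ identifies the two maps. Part (8) is once more a left regular band fact supplied by (1): if $eM=fM$ then $ef=f$ and $fe=e$, and combining these with the band law yields $e=efe=ef=f$, while the identical computation carried out inside $E(M)$ gives the equivalence with $eE(M)=fE(M)$.
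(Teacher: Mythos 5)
There is a genuine gap in your treatment of (5), and it sits at exactly the step you yourself flag as ``the main obstacle of the proposition.'' You correctly reduce the reverse inclusion $Mm^{\omega}n^{\omega}\subseteq M(mn)^{\omega}$ (via (4), applied to the idempotent $m^{\omega}n^{\omega}$, which is idempotent by (1)) to the claim $m^{\omega}n^{\omega}\in M(mn)M$ --- but then you leave that claim unproven, asserting that it requires ``stability, together with the behaviour of $\omega$-powers under products.'' That diagnosis is incorrect: no finite semigroup theory is needed, and the claim is a one-line computation. Since $m^{\omega}$ and $n^{\omega}$ are idempotent, write $m^{\omega}=m^{k}$ and $n^{\omega}=n^{l}$ with $k,l\geq 1$; then $m^{\omega}=(m^{\omega})^2=m^{2k}$ and $n^{\omega}=n^{2l}$, so
\[
m^{\omega}n^{\omega}=m^{2k-1}(mn)\,n^{2l-1}\in M(mn)M,
\]
as $2k-1,\,2l-1\geq 1$. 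This is precisely how the paper proceeds: it records $Mm^{\omega}n^{\omega}\subseteq MmnM$ as immediate and then invokes (4). With this line inserted, your argument for (5) is complete; without it, the central part of the proposition is missing.

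Everything else checks out. Parts (1)--(3), (6) and (8) follow the paper essentially verbatim, and in (4) and (7) you take genuinely different but valid routes. For (4), the paper manufactures the idempotent $f=yexm$ from $e=xmy$, shows $e\in Mf\subseteq Mm$, and then inducts to get $Me\subseteq Mm^{k}$ for all $k$; you instead apply $\rho_e$ to get $e=(ea)(em)(eb)$ inside $eMe$ and invoke the finiteness fact that an element generating the whole finite monoid as a two-sided ideal is a unit, so that $em^{\omega}=(em)^{k}$ is invertible and $e=vm^{\omega}$. Your version is correct but quietly imports the stability-type input that the paper's hands-on computation avoids --- ironically, the only genuine finite-semigroup input in your write-up occurs here, not in (5) where you claimed it was needed. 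For (7), the paper identifies $\rho_e$ with $\psi$ by factoring each through the other (using $\psi(e)=1$) and appealing to surjectivity, whereas you check via (1) that the subgroup $N$ in Graham's description $G(M)\cong G_e/N$ is trivial, since a product of idempotents of $MeM$ is idempotent and the only idempotent in $G_e$ is $e$; both arguments are clean, and yours makes good use of the concrete description the paper recalls just before the proposition.
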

\begin{proof}
If $e,f\in E(M)$, then since $e$ is right semicentral, we have that $efe=ef$. Thus $efef=eff=ef$ and $E(M)$ is a left regular band. This proves the first item.
The second follows because on the one hand, $Mef=Mefe\subseteq Me\cap Mf$, and on the other if $m\in Me\cap Mf$, then $m=me=mf$, and so $mef=m$.  Thus $Mef=Me\cap Mf$.  The third item follows from the second since $M= M1$ is the maximum idempotent-generated principal left ideal.

For the fourth item, trivially $Mm^{\omega}\subseteq Mm\subseteq MmM$ and so we need only show that if $e\in MmM$, then $e\in Mm^{\omega}$.  Write $e=xmy$ with $x,y\in M$.  Then $yexmyexm = yexm$, and so $f=yexm$ is an idempotent.   Now $xmfy=xmyexmy=e$, and so $e\in MfM=Mf$, as $f$ is right semicentral.  Thus $Me\subseteq Mf\subseteq Mm$. Suppose inductively that $Me\subseteq Mm^k$ with $k\geq 1$. Then, since $e$ is right semicentral, we have $e\in eMm\subseteq Mem\subseteq Mm^{k+1}$
  Thus $e\in Mm^n$ for all $n\geq 1$, and so $e\in Mm^{\omega}$, that is, $Me\subseteq Mm^{\omega}$.

For the fifth item, since $(mn)^{\omega}\in MmM\cap MnM$, we deduce from (4) that $M(mn)^{\omega}\subseteq Mm^\omega\cap Mn^\omega=Mm^{\omega}n^{\omega}$, where the last equality follows from (2).  Conversely, $Mm^{\omega}n^{\omega}\subseteq  MmnM$, and so $Mm^{\omega}n^{\omega}\subseteq M(mn)^\omega$ by (4).  Thus $M(mn)^{\omega}=Mm^{\omega}\cap Mn^{\omega}$, and so $\sigma_M$ is a homomorphism.  For the final statement of (5) it suffices to observe that for idempotents $e,f\in E(M)$, we have that $Me\subseteq Mf$ if and only if $ef=e$, if and only if $E(M)e\subseteq E(M)f$.

The first part of the sixth item follows because $e$ is right semicentral.  The monoid $eMe$ is right semicentral because if $f\in E(eMe)\subseteq E(M)$, then $fmf=fm$ for all $m\in M$ and hence, in particular, for all $m\in eMe$.

For the seventh item, it follows from standard finite semigroup theory that $eMe$ is a group~\cite[Appendix~A]{qtheor}.  Thus $\rho_e$ factors through the projection $\psi\colon M\to G(M)$.  But $\psi(em) = \psi(e)\psi(m)=\psi(m)$ since $\psi(e)=\psi(e)^2$ implies $\psi(e)=1$, and so $\psi$ factors through $\rho_e$.  Thus we can identify $\rho_e$ with $\psi$.

For the final item, note that if $eM=fM$, then $ef=f$ and $fe=e$.  But since $E(M)$ is a left regular band, $f=ef = efe=ee=e$.  This completes the proof.
\end{proof}

From now on we shall write $\sigma$ instead of $\sigma_M$.

We now turn to the case of finite regular left duo monoids.  It follows from Proposition~\ref{p:completeness}(8) that if $M$ is a regular left duo monoid with left regular band of idempotents $B$, then we can identify $B$, $\Omega_B(B)=B/{\mathscr R}$ and $\Omega_M(M)=M/{\mathscr R}$.  Moreover, $eB\subseteq fB$ if and only if $fe=e$.  Thus $B$ is a poset with respect to the ordering $e\leq f$ if $fe=e$; note that if $fe=e$ in a left regular band, then also $ef=efe=e^2=e$, and so $\leq$ is the natural partial order.  Next we want to translate the order preserving left action of $M$ (and hence its group of units) on $\Omega_M(M)$ given by $m(aM)=maM$, into an action on $B$.  If $m\in M$, then $m\in G_{m^\omega}$ by regularity.  Indeed, if $m=mam$, then $ma$ is idempotent, and direct computation yields $mam^{\omega}=m^{\omega}$.   But also $Mma\subseteq Mm^{\omega}$ by Proposition~\ref{p:completeness}(4).  Thus $m^{\omega}=mam^{\omega}=ma$. Therefore, $m^{\omega}mm^{\omega}=m^{\omega}m=mam=m$, and so $m\in G_{m^{\omega}}$ (as $m^{\omega}$ is a power of $m$).  Denote by $m^\dagger$ the inverse of $m$ in $G_{m^{\omega}}$.

\begin{Prop}\label{p:identify.poset}
Let $M$ be a finite regular left duo monoid with left regular band of idempotents $B$ and group of units $G$.
\begin{enumerate}
	\item $M$ acts by endomorphisms on $B$ via $(m,e)\mapsto mem^\dagger$ and the restriction of this action to $G$ is the action via conjugation.
	\item This action is by order preserving maps.
	\item The mapping $\lambda\colon B\to M/{\R}$ given by $\lambda(e)=eM$ is an $M$-equivariant order isomorphism.
\end{enumerate}
\end{Prop}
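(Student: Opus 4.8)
The whole proposition rests on a single \emph{absorption identity}: for every $m\in M$ and $e\in B$ one has $me=mem^{\omega}$. I would prove this first, since it trivializes everything that follows. It is immediate from the two facts available just before the statement, namely $mm^{\omega}=m$ (because $m\in G_{m^{\omega}}$ and $m^{\omega}$ is the identity of that group) together with right semicentrality in the form $m^{\omega}em^{\omega}=m^{\omega}e$: indeed $me=mm^{\omega}e=m(m^{\omega}em^{\omega})=mem^{\omega}$.

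With this in hand, the first task of item (1) is to check that $mem^{\dagger}$ is an idempotent, so that $e\mapsto mem^{\dagger}$ is a genuine self-map of $B$. Expanding $(mem^{\dagger})^{2}$ and collapsing $m^{\dagger}m=m^{\omega}$ leaves $mem^{\omega}em^{\dagger}$, which absorption (and $ee=e$) rewrites as $mem^{\dagger}$. The endomorphism property is equally short: $(mem^{\dagger})(mfm^{\dagger})=mem^{\omega}fm^{\dagger}=mefm^{\dagger}$, again by absorption, and this is $m(ef)m^{\dagger}$. For a unit $u\in G$ we have $u^{\omega}=1$ and $u^{\dagger}=u^{-1}$, so the formula specializes to $ueu^{-1}$, the conjugation action.

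For item (3) I would first dispatch the poset statement. The map $\lambda(e)=eM$ is injective by Proposition~\ref{p:completeness}(8), surjective because $mM=m^{\omega}M$ with $m^{\omega}\in B$, and an order isomorphism because $e\le f$ means $fe=e$, which says exactly that $e\in fM$, i.e.\ $eM\subseteq fM$. Equivariance, $\lambda(mem^{\dagger})=m\cdot\lambda(e)$, amounts to $mem^{\dagger}M=meM$: the inclusion $\subseteq$ holds since $mem^{\dagger}\in meM$, and $\supseteq$ holds since absorption gives $me=mem^{\omega}=mem^{\dagger}m\in mem^{\dagger}M$.

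Finally I would obtain the remaining action axioms of (1)—associativity and $1\bullet e=e$—not by a direct calculation (which would force an unpleasant comparison of $(mn)^{\dagger}$ with $n^{\dagger}m^{\dagger}$, an equality that need not hold) but by \emph{transport of structure}: $\lambda$ is a bijection intertwining $e\mapsto mem^{\dagger}$ with the honest left action $aM\mapsto maM$ on $M/{\R}$, so the action axioms descend through injectivity of $\lambda$. Item (2) transports the same way, or falls out directly: if $fe=e$ then $(mfm^{\dagger})(mem^{\dagger})=mfm^{\omega}em^{\dagger}=m(fe)m^{\dagger}=mem^{\dagger}$, so $m\bullet e\le m\bullet f$. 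The one place demanding care is isolating the absorption identity and deciding to route associativity through $\lambda$ rather than through $(mn)^{\dagger}$; once the identity is in place, every remaining step is a two- or three-term rewriting.
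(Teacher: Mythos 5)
Your proposal is correct and takes essentially the same route as the paper: the same absorption identity $me=mem^{\omega}$, the same two-line verifications of idempotency of $mem^{\dagger}$, of the endomorphism property, of the unit case, and of the equivariance $mem^{\dagger}M=meM$, with injectivity/surjectivity of $\lambda$ handled identically via Proposition~\ref{p:completeness}(8) and regularity. Even your ``transport of structure'' step for associativity agrees in substance with the paper's argument, since injectivity of $\lambda$ \emph{is} Proposition~\ref{p:completeness}(8) and the paper likewise proves $mne(mn)^{\dagger}=m(nen^{\dagger})m^{\dagger}$ by showing both idempotents generate the right ideal $mneM$ — so your concern that a direct calculation would force comparing $(mn)^{\dagger}$ with $n^{\dagger}m^{\dagger}$ does not apply; the paper's inline computation sidesteps that in exactly the way you do, merely without first packaging it as item (3).
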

\begin{proof}
Note that if $g\in G$, then $g^{\omega}=1$, and so $g^\dagger=g\inv$.
We shall repeatedly use that in a regular left duo monoid, if $m,x\in M$, then $mx\in Mm=Mm^{\omega}$, and so $mxm^{\omega} =mx$.  Also $m^\dagger m = m^{\omega}=mm^\dagger$ and $m=mm^{\omega}$.  Therefore, $mem^\dagger mem^\dagger = mem^{\omega}em^\dagger=mem^\dagger$.   Also if $e,f\in B$, then $mem^\dagger mfm^\dagger = mem^{\omega}fm^\dagger = mefm^\dagger$.   Thus $e\mapsto mem^\dagger$ is an endomorphism of $B$.  If $m,n\in M$ and $e\in B$, then we note that $mne(mn)^\dagger (mn) = mne(mn)^{\omega}=mne$ and $mnen^\dagger m^\dagger mn = mnen^\dagger m^{\omega}n = mnen^\dagger n=mnen^{\omega}=mne$, and so $mne(mn)^\dagger M=mneM=mnen^\dagger m^\dagger M$, whence $mne(mn)^\dagger = m(nen^\dagger)m^\dagger$ by Proposition~\ref{p:completeness}(8).   This proves (1), and (2) follows because any endomorphism of a left regular band preserves order.

 For the third item, $\lambda$ is surjective because $M$ is regular and injective by Proposition~\ref{p:completeness}(8).
It is order preserving because $e\leq f$ implies $fe=e$ and so $eM\subseteq fM$.  Conversely, if $eM\subseteq fM$, then $fe=e$, and so $e\leq f$.  It follows that $\lambda$ is an order isomorphism.
It remains to check $M$-equivariance.  But $\lambda(mem^\dagger) = mem^\dagger M=meM$ as $mem^\dagger m=mem^{\omega}=me$.
\end{proof}

Assume that $M$ is right semicentral and finite (not necessarily regular) with left regular band of idempotents $B$. Let $G_e$ denote the group of units of $eMe=eM$ for $e\in B$.   Then we have a surjective ring homomorphism $\p_e\colon KM\to KG_e$ for any commutative ring $K$ given by
\[\p_e(m) =\begin{cases} em, & \text{if}\ em\in G_e\\ 0 , & \text{else}\end{cases}\] which is the composition of $KM\twoheadrightarrow K[eMe]\twoheadrightarrow KG_e$.  Note that $\p_e(m) =em$ if and only if $MmM\supseteq Me$ by Proposition~\ref{p:completeness}(4) and since $eM\subseteq Me$.   Thus every simple $KG_e$-module is a simple $KM$-module via inflation along $\p_e$ and it follows from the Clifford-Munn-Ponizovskii theorem~\cite[Chapter~5]{repbook} that all simple $KM$-modules arise in this way.

Let us briefly recall Green's relations~\cite{Green}; see~\cite[Appendix~A]{qtheor} or~\cite{Arbib} for details.  Elements $a,b\in M$ are $\mathscr J$-equivalent if $MaM=MbM$, $\mathscr L$-equivalent if $Ma=Mb$ and $\mathscr R$-equivalent if $aM=bM$.  A $\mathscr J$-class $J$ is called \emph{regular} if it contains an idempotent or, equivalently, each element of $J$ is regular.  The $\mathscr J$-class (respectively, $\mathscr R$-class, $\mathscr L$-class) of an element $m\in M$ is denoted by $J_m$ (respectively, $R_m$, $L_m$).  Any $\mathscr L$- or $\mathscr R$-class of a regular $\J$-class contains an idempotent.  The set of $\mathscr J$-classes is partially ordered by $J\leq J'$ if $MJM\subseteq MJ'M$.

Recall that the \emph{apex}~\cite[Chapter~5]{repbook} of a simple $KM$-module $V$ for a finite monoid $M$ is the unique minimal $\mathscr J$-class $J$ of $M$ with $JV\neq 0$ and, moreover, $J$ is a regular $\J$-class.
Under the Clifford-Munn-Ponizovskii classification, $V$ then corresponds to the simple $KG_e$-module $eV$ where $e$ is an idempotent of $J$.

If $e$ is an idempotent of a monoid $M$, then $KL_e$ has a natural $KM$-$KG_e$-bimodule structure, and is free as a right $KG_e$-module.  We put $\Ind_e(V)=KL_e\otimes_{KG_e} V$ for a $KG_e$-module $V$ and call this an \emph{induced module}.  Dually, $KR_e$ is a $KG_e$-$KM$-bimodule, and we have $\Coind_e(V)=\Hom_{KG_e}(KR_e,V)$.  If $S$ is simple, then $\Ind_e(S)$ and $\Coind_e(S)$ are indecomposable modules.  Moreover, $\Ind_e(S)$ has simple top, $\Coind_e(S)$ has simple socle and both are isomorphic to the simple module associated to $S$ in the Clifford-Munn-Ponizovskii classification.  Details can be found in~\cite[Chapter~5]{repbook}.

  For a right semicentral monoid $M$ with left regular band of idempotents $B$, the regular $\mathscr J$-classes are the same as the $\mathscr L$-classes of idempotents since $MeM=Me$ for an idempotent $e$, and so the poset of regular $\mathscr J$-classes can be identified with $\Lambda(M)\cong \Lambda(B)$ (by Proposition~\ref{p:completeness}(5))  via $J_e\mapsto Be$ for $e\in B$.  We therefore will talk about a principal left ideal of $B$ being the apex of a simple $KM$-module as shorthand for the corresponding regular $\mathscr J$-class of $M$ being the apex.

We first generalize some notation from~\cite{ourmemoirs} to right semicentral monoids; we remark that the notation in~\cite{MSS} is different. If $M$ is a finite right semicentral monoid with left regular band of idempotents $B$, then we identify $\Lambda(M)$ and $\Lambda(B)$ as per Proposition~\ref{p:completeness}(5).  If $X\in \Lambda(B)$, then $M_{\geq X}=\{m\in M\mid \sigma(m)\geq X\}$ is a submonoid by Proposition~\ref{p:completeness}(5) and is a right semicentral monoid with left regular band of idempotents $B_{\geq X}$. We call this the \emph{contraction} of $M$ to $X$ (the nomenclature comes from the theory of oriented matroids, cf.~\cite{ourmemoirs}). If $e\in B$, we put $\partial eM = eM\setminus G_e = eMe\setminus G_e$.    This is the ideal of singular elements of $eM$.  Note that $eM$ has left regular band of idempotents $eB$. The origin of this notation is that, for many naturally occurring left regular bands $B$, $eB$ is the face poset of a regular cell decomposition of a ball and $\partial eB$ is the face poset of the corresponding cellular decomposition of the boundary sphere (see~\cite{ourmemoirs}).
If $\sigma(e)\geq X$, then we can talk about $\partial eM_{\geq X}$.
If $M$ is regular, we identify $\Omega_{M_{\geq X}}(\partial eM_{\geq X})$ with $\partial eB_{\geq X}$ via Proposition~\ref{p:identify.poset} and, under this identification, the $G_e$-action corresponds to the conjugation action.

Note that if $M$ is right semicentral and $e,f\in E(M)$ with $e$ in the minimal ideal of $M$, then $fe$ is also an idempotent in the minimal ideal of $M$ with $fe\in fM=fMf$.  Thus without loss of generality we may assume $e\in fM$.

Recall that if $V$ is a finite dimensional $KG$-module for a group $V$, then $V^*$ denotes the contragredient module $\Hom_K(V,K)$ with action $(gf)(v) = f(g\inv v)$ for $g\in G$, $f\in \Hom_K(V,K)$ and $v\in V$. Note that $V^*\otimes_K W\cong \Hom_K(V,W)$ as $KG$-modules where $G$ acts on $\Hom_K(V,W)$ by conjugation.

\begin{Prop}\label{p:ext.left.duo}
Let $M$ be a finite right semicentral monoid with left regular band of idempotents $B$.  Let $e,f\in B$ be idempotents with $e$ in the minimal ideal of $M$ and $e\leq f$ (i.e., $e\in fM$). Let $K$ be a field, $V$ a simple $KG_e$-module and $W$ a simple $KG_f$-module.
\begin{enumerate}
	\item If $e=f$, then $\Ext^n_{KM}(V,W)\cong \Ext^n_{KG_e}(V,W)$. In particular, if the characteristic of $K$ does not divide $|G_e|$, then $\Ext^n_{KM}(V,W)=0$ unless $n=0$ and $V\cong W$, in which case it is isomorphic to $\End_{KG_e}(V)$.
	\item If $e\neq f$, then $\Ext^n_{KM}(W,V) =0$.
\item If $e\neq f$, $\Ext^1_{KM}(V,W)\cong \Hom_{KG_f}(\til H_{0}(\Delta(\Omega(\partial fM)), K),V^\ast\otimes_K W)$. If $M$ is regular, then   \[\Ext^1_{KM}(V,W)\cong \Hom_{KG_f}(\til H_{0}(\Delta(\partial fB), K),V^\ast\otimes_K W).\]
\item If $e\neq f$, there is a spectral sequence \[\Ext^p_{KG_f}(\til H_{q-1}((\partial fM)\mathcal E(fMf), K),V^\ast\otimes_K W)\Rightarrow_p\Ext^n_{KM}(V,W).\] If $M$ is regular, there is a spectral sequence \[\Ext^p_{KG_f}(\til H_{q-1}(\Delta(\partial fB), K),V^\ast\otimes_K W)\Rightarrow_p \Ext^n_{KM}(V,W) .\]
\item If $e\neq f$ and the characteristic of $K$ does not divide $|G_f|$, then  $\Ext^n_{KM}(V,W)\cong \Hom_{KG_f}(\til H_{n-1}((\partial fM)\mathcal E(fMf), K),V^\ast\otimes_K W)$.  If $M$ is regular, then \[\Ext^n_{KM}(V,W)\cong \Hom_{KG_f}(\til H_{n-1}(\Delta(\partial fB), K),V^\ast\otimes_K W).\]
\end{enumerate}
\end{Prop}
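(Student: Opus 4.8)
The plan is to dispatch the diagonal and vanishing cases (1) and (2) through the idempotent-ideal recollement attached to the minimal ideal, and then to extract (3)--(5) from a single topological resolution. Since $e$ lies in the minimal ideal, the two-sided ideal $\mathcal I=KMeKM$ is idempotent and projective as a left $KM$-module (it is a direct sum of copies of the projective module $KMe=KL_e$), and $eKMe=KG_e$ by Proposition~\ref{p:completeness}(7); thus the idempotent $e$ determines a recollement of $\module{KM}$ with bottom stratum $\module{KG_e}$, where $j^{*}=e\cdot(-)$, $j_{!}=\Ind_e$ and $j_{*}=\Coind_e$. For (1), with $e=f$, both $V$ and $W$ live on this bottom stratum, and the exactness of $j^{*}$ together with the projectivity of $KMe$ identifies $\Ext^n_{KM}(V,W)$ with $\Ext^n_{KG_e}(V,W)$; the ``in particular'' clause is then Maschke's theorem. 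For (2), with $e\neq f$, the apex $Bf$ of $\til W$ strictly exceeds $Be$, so $e\til W=0$ and $\til W$ is inflated from $KM/\mathcal I$; the recollement identity $j^{*}i_{*}=0$ gives $\Ext^{\bullet}_{KM}(\til W,\Coind_e(V))=0$, and since the cokernel of $\til V\hookrightarrow\Coind_e(V)$ is again annihilated by $\mathcal I$, an induction on the apex yields $\Ext^n_{KM}(W,V)=0$.

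For the substantive items (3)--(5) I would first reduce to the case $f=1$. Because $e\leq f$ forces $ef=fe=e$, the idempotent $e$ lies in $fMf=fM$ and stays in its minimal ideal, with $e(fMf)e=eMe=G_e$; moreover $fMf$ is right semicentral with group of units $G_f$ and idempotents $fB$ by Proposition~\ref{p:completeness}(6). Using the retraction $\rho_f$ and the induction--restriction adjunctions attached to $f$, together with the fact that both simples have apex $\leq Bf$, I would establish a change-of-rings isomorphism $\Ext^n_{KM}(V,W)\cong\Ext^n_{K[fMf]}(V,W)$. This places the whole computation inside $fMf$, where $W$ is inflated along $\p_f$ and the singular ideal $\partial fM$ acts on $\til W$ as $0$.

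With $f=1$, so that $G=G_f$ is the group of units and $\til W$ is $W$ inflated with $S$ acting as zero, I would build a projective resolution $P_\bullet\to\til V$ from the contractible free $M$-CW complex $\mathcal EM$ and its invariant subcomplex $S\mathcal EM$: the augmented cellular chain complex of $\mathcal EM$ resolves the trivial module, and passing to the pair $(\mathcal EM,S\mathcal EM)$ and splicing in $\Ind_e(V)$ at the minimal ideal produces a resolution whose $n$-th term is induced from the $n$-cells of $(\partial fM)\mathcal E(fMf)$. Applying $\Hom_{KM}(-,\til W)$ annihilates every term not induced from the group of units and identifies $\Hom_{KM}(P_\bullet,\til W)$ with the complex $\Hom_{KG_f}\bigl(\til C_{\bullet-1}((\partial fM)\mathcal E(fMf),K),\,V^{\ast}\otimes_K W\bigr)$; the dual $V^{\ast}$ appears because $\Hom$ is contravariant and $V$ sits at the minimal-ideal vertices, while the conjugation action of $G_f$ on $\partial fM$ from Proposition~\ref{p:identify.poset} makes $V^{\ast}\otimes_K W\cong\Hom_K(V,W)$ the correct coefficient $KG_f$-module. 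The hyperhomology of this complex is computed by the universal-coefficient spectral sequence over $KG_f$, which is precisely the spectral sequence of (4); its edge map in total degree one gives (3). When $\mathrm{char}\,K\nmid|G_f|$ the algebra $KG_f$ is semisimple, all higher $\Ext^p_{KG_f}$ vanish, the spectral sequence collapses onto its $p=0$ row, and (5) follows. In every case, when $M$ is regular the $M$-equivariant homotopy equivalence $\Phi_S\colon S\mathcal EM\to\Delta(\Omega_M(S))$ replaces the cell complex by the order complex $\Delta(\partial fB)$.

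The step I expect to be the main obstacle is the identification in the previous paragraph: producing the resolution $P_\bullet\to\til V$ $G_f$-equivariantly and verifying that $\Hom_{KM}(P_\bullet,\til W)$ is exactly the asserted $\Hom_{KG_f}$-complex with coefficient module $V^{\ast}\otimes_K W$ carrying the conjugation action. The delicate bookkeeping is that the minimal-ideal cells carry the local group $G_e$ rather than $G_f$, so one must track how $G_f$ permutes the minimal idempotents and twists $V$ in order to assemble the $G_f$-module $\Hom_K(V,W)$, and check that the boundary maps of $(\partial fM)\mathcal E(fMf)$ match the differentials of the resolution. By contrast, the reduction to $f=1$ and the recollement arguments for (1) and (2) are comparatively routine given Proposition~\ref{p:completeness}.
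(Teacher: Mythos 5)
Your items (1) and (2) and the reduction to $f=1$ are correct and essentially parallel the paper's proof: the paper also passes to $fMf$ (via the change-of-rings fact that inflation along $\rho_f\colon KM\to K[fMf]$ preserves $\Ext$, quoting \cite[Corollary~2.2(2)]{affinetoprep}, which your adjunction argument in effect reproves, since $K[fMf]\cong fKM$ is projective as a right $KM$-module and $eKM=KG_e$), and it likewise identifies the inflated simples with $\Coind_e$, respectively $\Coind_1$. The only divergence there is that the paper proves (2) by the standard duality $D$ and the opposite monoid rather than your direct adjunction computation; both work, and your extra step about the cokernel of $\til V\hookrightarrow \Coind_e(V)$ is vacuous, since for $e$ in the minimal ideal that inclusion is an equality.

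The genuine gap is exactly the step you flag: constructing the projective resolution behind (3)--(5), which the paper outsources to \cite[Corollaries~4.8 and~4.11]{affinetoprep}. The mechanism there is not the one you propose. The resolution of $\til V$ is $C_\bullet(\mathcal E(fMf),K)\otimes_K V$, the chain complex of the \emph{whole} contractible free complex tensored with $V$; its terms are projective in \emph{arbitrary} characteristic because $V$ is inflated from the group completion $G(fMf)\cong G_e$ (Proposition~\ref{p:completeness}(7)), and tensoring a free module with a module inflated from the group completion preserves freeness by an untwisting argument (\cite[Theorem~4.4]{affinetoprep}). The singular subcomplex $(\partial fM)\mathcal E(fMf)$ enters only after applying $\Hom_{KM}(-,\til W)$: since $\partial fM$ annihilates $\til W$, every $KM$-map kills the span of the singular cells, so the $\Hom$-complex becomes $\Hom_{KG_f}$ of the relative chains $C_\bullet(\mathcal E(fMf),(\partial fM)\mathcal E(fMf);K)$ (free $KG_f$-modules, with homology $\til H_{\bullet-1}$ of the singular part) into $V^*\otimes_K W$. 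By contrast, your construction --- ``splicing in $\Ind_e(V)$'' into a resolution ``whose $n$-th term is induced from the $n$-cells of $(\partial fM)\mathcal E(fMf)$'' --- fails on two counts: the resolution's terms must come from all cells of $\mathcal E(fMf)$, not the singular ones (the singular cells index the answer, not the resolution); and $\Ind_e(V)=KL_e\otimes_{KG_e}V$ is projective only when $V$ is projective over $KG_e$, a hypothesis unavailable in (3) and (4) (no characteristic assumption at all) and even in (5), which restricts $|G_f|$ but not $|G_e|$. Once the resolution is built correctly, your hypercohomology spectral sequence, its collapse when $KG_f$ is semisimple, and the regular-case substitution via $\Phi_S$ and Proposition~\ref{p:identify.poset} all go through as you describe.
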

\begin{proof}
Assume first that $e=f$.  In particular, $eMe=G_e$. As $eV=V$ as a $KG_e$-module, applying~\cite[Corollary~2.2(2)]{affinetoprep}, we obtain $\Ext^n_{KM}(V,W)\cong \Ext^n_{KG_e}(V,W)$.  In particular, if the characteristic of $K$ does not divide $|G_e|$, then since $KG_e$ is semisimple by Maschke's theorem, $\Ext^n_{KG_e}(V,W)=0$ unless $n=0$ and, by Schur's Lemma, $V\cong W$, in which case it is isomorphic to $\End_{KG_e}(V)$.

To prove (2) we use the standard duality for finite dimensional algebras (cf.~\cite{assem} or~\cite[Remark~3.12]{affinetoprep}).  If $eM=eMe$, then $M^{op}e=eM^{op}e$ and if $e\in fM$, then $e\in M^{op}f$.  Also $G_e^{op}\cong G_e$ and $G_f^{op}\cong G_f$.  So Corollary~\cite[Corollary~2.2(1)]{affinetoprep} yields $\Ext^n_{KM}(W,V)\cong \Ext^n_{KM^{op}}(D(V),D(W))\cong \Ext^n_{K[eM^{op}e]}(D(V), D(W)e)=0$ (using right module notation) since $D(V)$ is a $K[eM^{op}e]$-module inflated to $KM$ and $D(W)e=0$ as $e$ annihilates $W$.

To prove (3)--(5), note that since $fM=fMf$, $fV=V$ and $fW=W$, we have that $\Ext^n_{KM}(V,W)\cong \Ext^n_{KfMf}(V,W)$ by Corollary~\cite[Corollary~2.2(2)]{affinetoprep}.  Thus, without loss of generality, we may assume that $f=1$.  Then, since $M$ is finite, we have  $R_1=G_1$, $1M\setminus R_1=S$ and $\Coind_1(W)=\Hom_{KG_1}(KG_1,W)\cong W$ via evaluation at $1$ as $KM$-modules.  Therefore, (3) follows from Corollary~\cite[Corollary~4.11]{affinetoprep} and (4),(5) follow from Corollary~\cite[Corollary~4.8]{affinetoprep},  where, in the regular case, we use Proposition~\ref{p:identify.poset} to identify $\Omega(\partial M)$ with $\partial B$ as $G_1$-posets.
\end{proof}

More can be said for the case of regular left duo monoids using the following result~\cite[Lemma~3.3]{rrbg} (see also~\cite[Lemma~16.6]{repbook}).

\begin{Thm}[Margolis-Steinberg]\label{t:Margolis.Steinberg}
Let $M$ be a finite regular monoid and $K$ a field.  If $I$ is an ideal of $M$ and $V,W$ are $KM$-modules annihilated by $I$, then $\Ext^n_{KM}(V,W)\cong \Ext^n_{KM/KI}(V,W)$.
\end{Thm}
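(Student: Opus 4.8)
The plan is to show that the quotient map $KM\to KM/KI$ does not alter $\Ext$ between modules killed by $I$, and to reduce the case of a general ideal to the case of the minimal ideal by an induction that peels off one $\mathscr J$-class at a time from the bottom. Let $K_0$ denote the minimal ideal of $M$; since every nonempty ideal contains $K_0$, we have $K_0\subseteq I$. I would first prove the statement for the ideal $K_0$, which replaces $KM$ by $K[M/K_0]$, and then invoke the inductive hypothesis on the smaller monoid $M'=M/K_0$ with its ideal $I/K_0$. Here $M'$ is again finite and regular (regularity passes to Rees quotients: if $a=axa$ in $M$ and $a\notin K_0$, the same relation holds modulo $K_0$, and the adjoined zero is regular), and $K[M'/K[I/K_0]]\cong KM/KI$, so composing the two isomorphisms gives the claim. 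The base case $I=K_0$ needs only the first step. The induction is on $|M|$, which drops since $|M/K_0|<|M|$.

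The engine is an elementary change-of-rings lemma I would isolate first. Let $A$ be a ring and $J\trianglelefteq A$ a two-sided ideal that is (i) idempotent, $J^2=J$, and (ii) flat (e.g.\ projective) as a right $A$-module. Then $\Tor^A_q(A/J,V)=0$ for all $q>0$ and every left $A/J$-module $V$: for $q=1$ one has $\Tor^A_1(A/J,V)\cong J\otimes_A V$, which vanishes because, writing $x=\sum x_iy_i\in J=J^2$, we get $x\otimes v=\sum x_i\otimes y_iv=0$ as $y_iv\in JV=0$; for $q\ge 2$ the sequence $0\to J\to A\to A/J\to 0$ identifies $\Tor^A_q(A/J,V)$ with $\Tor^A_{q-1}(J,V)$, which vanishes by flatness of $J$. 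Consequently, if $P_\bullet\to V$ is an $A$-projective resolution, then $(A/J)\otimes_A P_\bullet$ is an $A/J$-projective resolution of $V$; combined with the adjunction $\Hom_A(P,W)\cong \Hom_{A/J}((A/J)\otimes_A P,W)$ for an $A/J$-module $W$, this gives $\Ext^n_A(V,W)\cong \Ext^n_{A/J}(V,W)$ for all $n$.

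It remains to verify (i) and (ii) for $A=KM$ and $J=K[K_0]$. Idempotency is immediate from regularity: $K_0$ is completely simple, so each $a\in K_0$ satisfies $a=fa$ for an idempotent $f\in K_0$, whence $K_0=K_0^2$ and $K[K_0]^2=K[K_0]$. For projectivity I would use minimality of $K_0$: choosing one idempotent $e_L$ per $\mathscr L$-class $L$ of $K_0$, minimality forces $Me_L=L$, so $KL=KMe_L=(KM)e_L$ is a direct summand of the free module $KM$, hence projective. Since the $\mathscr L$-classes partition $K_0$ and each $KL$ is a left submodule (for $x\in L$, $m\in M$ one has $mx\in Mx=L$ inside the minimal ideal), we obtain $K[K_0]=\bigoplus_L KMe_L$ as left $KM$-modules, which is projective; applying the same reasoning to $M^{\mathrm{op}}$ (also regular) gives right projectivity, as needed in (ii).

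The main obstacle is conceptual rather than computational: for a general ideal $I$ the algebra $KI$ need not visibly decompose as a sum of projectives, so one cannot run the lemma for $I$ in a single stroke. The device that rescues the argument is exactly the reduction to the minimal ideal, where projectivity is transparent because idempotent-generated principal left ideals coincide with $\mathscr L$-classes. The points deserving explicit care are therefore (a) that the Rees quotient $M/K_0$ remains regular, legitimizing the induction, and (b) the identification $K[M'/K[I/K_0]]\cong KM/KI$ of quotient algebras; both are routine but should be stated. (Phrased invariantly, the content is that each $K[K_0]$ is a \emph{stratifying ideal}, so $KM\to KM/KI$ is a homological epimorphism, filtered through the $\mathscr J$-order.)
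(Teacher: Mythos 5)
The paper itself gives no proof of this statement; it is imported wholesale from Margolis--Steinberg \cite[Lemma~3.3]{rrbg} (see also \cite[Lemma~16.6]{repbook}), so your argument has to stand on its own. Its two main ingredients are sound. The change-of-rings lemma is correct: if $J$ is an idempotent two-sided ideal of $A$ that is flat as a right $A$-module, then $\Tor^A_q(A/J,V)=0$ for all $q>0$ and every left $A/J$-module $V$, so $(A/J)\otimes_A(-)$ carries an $A$-projective resolution of $V$ to an $A/J$-projective resolution, and adjunction yields the $\Ext$ isomorphism. The base case is also correct: the minimal ideal $K_0$ of \emph{any} finite monoid is completely simple, $Me=L_e$ and $eM=R_e$ for idempotents $e\in K_0$, so $K[K_0]=\bigoplus_L KMe_L=\bigoplus_R e_RKM$ is idempotent and projective on both sides.

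The induction, however, has a genuine gap, visible in two places. First, $KM/K[K_0]$ is not $K[M/K_0]$: it is the \emph{contracted} algebra $K[M/K_0]/K\cdot 0$, of dimension $|M|-|K_0|$ rather than $|M|-|K_0|+1$. So the isomorphism produced by your kernel step lands in modules over the contracted algebra, while your inductive hypothesis is a statement about the full algebra $K[M']$; the two do not compose without a further application of your lemma to the ideal $\{0\}$ of $M'$. That much is patchable. The fatal point is that your induction parameter fails to decrease exactly when $M$ has a zero element: then $K_0=\{z\}$, so $M'=M/K_0\cong M$ and $I/K_0\cong I$, and the ``inductive hypothesis'' you invoke is the very instance being proved. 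Regular monoids with zero are not a fringe case (symmetric inverse monoids, or any lattice viewed as a monoid under meet, which is a commutative regular left duo monoid of precisely the kind this paper applies the theorem to). A symptom of the problem: your argument never makes essential use of regularity of $M$ (the kernel of any finite monoid is completely simple), yet the statement is false without it --- for $M=\{1,a,z\}$ with $a^2=z$ and $I=\{a,z\}$ one has $\Ext^1_{KM}(K,K)\cong \Ext^1_{K[x]/(x^2)}(K,K)\neq 0$ while $\Ext^1_{KM/KI}(K,K)=0$ --- so regularity must enter exactly where your induction stalls. The repair is to run the induction in the contracted world: if $N$ is a finite regular monoid with zero $\theta$ and $J$ is a $0$-minimal $\mathscr{J}$-class of $N$ (regular, since $N$ is), then inside $KN/K\theta$ the ideal spanned by the image of $J$ equals $\bigoplus_R e_R(KN/K\theta)$, the sum over the $\mathscr{R}$-classes of $J$ (the overlaps at $\theta$, which ruin directness in $KN$ itself, are killed in the quotient), hence is again idempotent and right projective, and your lemma peels it off. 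Each such step strictly decreases the number of nonzero $\mathscr{J}$-classes in the ideal, so this induction terminates; your kernel step already places you in the contracted setting, and the theorem follows.
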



Applying Theorem~\ref{t:Margolis.Steinberg} and Proposition~\ref{p:ext.left.duo}, we obtain the following computation of $\mathrm{Ext}$ between any two simple modules of a finite regular left duo monoid, generalizing the earlier results of Margolis, Saliola and the author for left regular bands~\cite{MSS,ourmemoirs}.  Note if $M$ is right semicentral with left regular band of idempotents $B$, then the simple modules with apex $Be$ are those inflated from $KG_e$.

\begin{Thm}\label{t:left.duo}
Let $M$ be a finite regular left duo monoid with left regular band of idempotents $B$ and $K$ a field.    Let $V,W$ be simple $KM$-modules with apexes $X,Y\in \Lambda(B)$, respectively.   Suppose that $Y=Bf$ with $f\in B$.  Then $\Ext^n_{KM}(V,W)=0$ unless $X\leq Y$.
\begin{enumerate}
	\item If $X=Y$, then $\Ext^n_{KM}(V,W)\cong \Ext_{KG_f}^n(V,W)$. In particular, if the characteristic of $K$ does not divide $|G_f|$, then $\Ext^n_{KM}(V,W)=0$ unless $n=0$ and $V\cong W$, in which case $\Ext^n_{KM}(V,W)\cong \End_{KG_f}(V)$.
\item If $X<Y$, then \[\Ext^1_{KM}(V,W)\cong \Hom_{KG_f}(\til H_0(\Delta(\partial fB_{\geq X}),K),V^\ast\otimes_K W).\]
\item If $X<Y$, then there is a spectral sequence \[\Ext^p_{KG_f}(\til H_{q-1}(\Delta(\partial fB_{\geq X}),K),V^\ast\otimes_K W)\Rightarrow_p \Ext^n_{KM}(V,W).\]
\item If $X<Y$ and the characteristic of $K$ does not divide $|G_f|$, then \[\Ext^n_{KM}(V,W)\cong \Hom_{KG_f}(\til H_{n-1}(\Delta(\partial fB_{\geq X}),K),V^\ast\otimes_K W).\]
\end{enumerate}
\end{Thm}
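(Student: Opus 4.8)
The plan is to deduce everything from Proposition~\ref{p:ext.left.duo} by using Theorem~\ref{t:Margolis.Steinberg} to pass from $M$ to a contraction in which the relevant apex becomes the minimal ideal. The basic device is that $\sigma\colon M\to\Lambda(B)$ is a homomorphism to the meet-semilattice of principal left ideals (Proposition~\ref{p:completeness}(5)), so for any $Z\in\Lambda(B)$ the set $I_Z=\{m\in M\mid\sigma(m)\not\geq Z\}$ is a two-sided ideal whose complement is the submonoid $M_{\geq Z}$; since $M_{\geq Z}$ is multiplicatively closed, the Rees quotient identifies $KM/KI_Z$ with $KM_{\geq Z}$. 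Because a simple module of apex $Be$ is inflated from $KG_e$ along $\p_e$, the element $m$ annihilates it precisely when $\sigma(m)\not\geq Be$; hence $V$ is annihilated by $I_X$ and $W$ by $I_Y$. I also record that $M_{\geq Z}$ is again a finite regular left duo monoid with idempotents $B_{\geq Z}$: it is right semicentral as a submonoid, and it is closed under $m\mapsto m^\dagger$ because $\sigma(m^\dagger)=\sigma(m)$, so it is regular.

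For the main case $X\leq Y$ I contract to $X$. Since $X\leq Y$ forces $I_X\subseteq I_Y$, both $V$ and $W$ are annihilated by $I_X$, and Theorem~\ref{t:Margolis.Steinberg} gives $\Ext^n_{KM}(V,W)\cong\Ext^n_{KM_{\geq X}}(V,W)$. In $M_{\geq X}$ the minimal ideal is exactly the regular $\mathscr J$-class corresponding to $X$, so a representing idempotent $e$ of $X$ lies in the minimal ideal; replacing $e$ by $fe$ (which keeps $\sigma$-value $X$ and now satisfies $fe\in fM$) puts us in the situation of Proposition~\ref{p:ext.left.duo}. Applying that proposition inside $M_{\geq X}$ yields the four items: when $X=Y$ I take $e=f$ and part (1) gives item (1); when $X<Y$ the idempotents are distinct and the regular-case formulas of parts (3)--(5) give items (2)--(4). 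The only bookkeeping is that the band of $M_{\geq X}$ is $B_{\geq X}$, so the complex $\Delta(\partial fB)$ computed in $M_{\geq X}$ is exactly $\Delta(\partial fB_{\geq X})$, while $G_f$ and the coefficient module $V^\ast\otimes_K W$ are unchanged by the passage to $M_{\geq X}$.

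It remains to prove $\Ext^n_{KM}(V,W)=0$ when $X\not\leq Y$, and here I would use the directedness of $KM$ with respect to the apex order rather than a single contraction. The key point is purely combinatorial: for an idempotent $e$ and a simple $KG_e$-module $S$, every composition factor of $\Ind_e(S)$ has apex $\geq\sigma(e)$. Indeed, in the Sch\"utzenberger action on $\Ind_e(S)=KL_e\otimes_{KG_e}S$ an element $y$ sends $x\otimes s$ to $yx\otimes s$ or to $0$, and $\sigma(yx)=\sigma(y)\wedge\sigma(x)$; so if $\sigma(y)\not\geq\sigma(e)$ then $\sigma(yx)<\sigma(e)$, whence $yx\notin L_e$ and $y$ annihilates the whole module. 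Thus every simple occurring in $\Ind_e(S)$ has apex $\geq\sigma(e)$. Feeding this into the standard $\Delta$-filtration of the projective cover $P(V)$, every composition factor of every term of a minimal projective resolution of $V$ has apex $\geq X$; since $Y\not\geq X$, the simple $W$ never appears in the top of a syzygy of $V$, so $\Ext^n_{KM}(V,W)=0$ for all $n$.

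I expect the last paragraph to be the main obstacle. The cases $X\leq Y$ and $X>Y$ are both direct: $X\leq Y$ is handled by the contraction above, and $X>Y$ can be handled symmetrically by contracting to $M_{\geq Y}$ and reading off Proposition~\ref{p:ext.left.duo}(2) with $W$ in the role of the apex-minimal module. The genuinely new input is the incomparable case, where neither contraction annihilates both modules; this is exactly where one must import the global stratification of $KM$ (or, equivalently, replay the duality argument of Proposition~\ref{p:ext.left.duo}(2) after contracting to the Rees quotient $T=M/(I_X\cap I_Y)$, in which $Y$ becomes a $0$-minimal $\mathscr J$-class and $e_YTe_Y$ is a group). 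Verifying that this stratification is available in the present generality of von Neumann regular left duo monoids is the step I would budget the most care for.
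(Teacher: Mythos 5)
Your treatment of the comparable cases is the paper's own argument. The paper proves the theorem by exactly your contraction device: for $X\leq Y$ it forms $I'=\{m\in M\mid X\nleq\sigma(m)\}$, notes that both $V$ and $W$ are annihilated by $I'$, applies Theorem~\ref{t:Margolis.Steinberg} to replace $KM$ by $KM/KI'\cong KM_{\geq X}$, and reads items (1)--(4) off Proposition~\ref{p:ext.left.duo}(1),(3)--(5), with the same normalization $e\mapsto fe$ to arrange $e\in fM$ and the same identification of the band of $M_{\geq X}$ with $B_{\geq X}$; for $Y<X$ it likewise contracts, to $M_{\geq Y}$, and quotes Proposition~\ref{p:ext.left.duo}(2).

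Where you genuinely diverge is the remaining vanishing. The paper disposes of incomparable $X,Y$ in one line by citing \cite[Theorem~3.7]{rrbg}, which holds for all finite regular monoids, whereas you give a unified syzygy argument covering all of $X\not\leq Y$: composition factors of $\Ind_e(S)$ have apex $\geq\sigma(e)$ (your Sch\"utzenberger computation is correct, using that $\sigma$ is a $\wedge$-homomorphism by Proposition~\ref{p:completeness}(5)), hence every term of a minimal projective resolution of $V$ has all composition factors of apex $\geq X$, hence $\Hom_{KM}(P_n,W)=0$ for all $n$ and all Ext groups vanish. This is a legitimate alternative, and it even yields more than the citation does (a bound on \emph{all} composition factors of the resolution, not merely Ext-directedness). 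However, its pivot --- ``the standard $\Delta$-filtration of the projective cover $P(V)$'' --- is not standard: for a general finite monoid the projective cover of a simple with apex $X$ need not be filtered by induced modules, nor need its composition factors have apexes bounded below by $X$; this is a special feature of the regular left duo setting. The fact you need is precisely Corollary~\ref{c:proj.cover} ($P(V)\cong\Ind_{e_X}(P)$), which rests on Proposition~\ref{p:schutz.proj} ($KL_{e_X}\cong KM\eta_X$ is projective, via Saliola's idempotents \eqref{eq:defidempotents}). Those results appear in the paper \emph{after} Theorem~\ref{t:left.duo} but are proved independently of it, so your route closes without circularity --- but as written the step is a gap, since the filtration you call standard is itself one of the paper's theorems and must be invoked or reproved. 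Your fallback suggestion (contracting to the Rees quotient by $I_X\cap I_Y$ and replaying the duality argument of Proposition~\ref{p:ext.left.duo}(2)) is also viable, but would require the same level of care you correctly anticipate.
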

\begin{proof}
Since $M$ is regular, it follows from~\cite[Theorem~3.7]{rrbg} that if $X,Y$ are incomparable, then $\Ext^n_{KM}(V,W)=0$.	

Suppose first that $Y<X$ and let $X=Be$ with $e\in E(M)$.  Without loss of generality we may assume that $f\in eM$.  Let $I=\{m\in M\mid Y\nleq \sigma(m)\}$.  Then $I$ is a two-sided ideal of $M$ whose complement  is the submonoid $M_{\geq Y}$ of $M$.  Moreover, $V,W$ are annihilated by $I$ and hence are $KM/KI$-modules.  But $KM/KI\cong KM_{\geq Y}$ and $M_{\geq Y}f$ is the minimal ideal of $M_{\geq Y}$.  Therefore, $\Ext^n_{KM}(V,W)=0$ by Theorem~\ref{t:Margolis.Steinberg} and Proposition~\ref{p:ext.left.duo}(2).

Next assume that $X\leq Y$.  Let $I'=\{m\in M\mid X\nleq \sigma(m)\}$.  Then $I'$ is a two-sided ideal of $M$ with complement the  submonoid $M_{\geq X}$ of $M$.  Furthermore, $V,W$ are annihilated by $I'$ and hence are $KM/KI'$-modules.  Like before, $KM/KI'\cong KM_{\geq X}$ and the minimal ideal of $M_{\geq X}$ is generated by any idempotent $e$ with $X=Be$.  Applying Theorem~\ref{t:Margolis.Steinberg} and Proposition~\ref{p:ext.left.duo}(1) and (3)--(5) yields the result.
\end{proof}

Theorem~\cite[Theorem~5.15]{rrbg} (suitably dualized) computes the same $\Ext$ spaces as Theorem~\ref{t:left.duo}(2). Although it is not too difficult to verify directly that both theorems gives the same answer, the statement in Theorem~\ref{t:left.duo}(2) is much cleaner and easier to work with. The quivers of a number of natural regular left duo monoids were computed by Margolis and the author in~\cite{rrbg}, including that of a regular left duo monoid introduced by Hsiao~\cite{Hsiao} to study the Mantaci-Reutenauer descent algebra~\cite{MantReut} for Type B Coxeter groups and other wreath products with symmetric groups.  We shall use our results to compute all $\mathrm{Ext}$ between simple modules for Hsiao's monoid and a quiver presentation for its algebra in the next section.

We end this section by constructing a complete set of orthogonal primitive idempotents for the algebra of a regular left duo monoid and describing the projective indecomposable modules.  These results were independently obtained in~\cite{2023arXiv230614985B}, as discussed in the historical note.

Fix for the remainder of this section a regular left duo monoid $M$ with left regular band of idempotents $B$. Let $K$ be a field.  We recall the construction of a complete set of orthogonal primitive idempotents for $KB$ from~\cite{Saliola}.  These idempotents will no longer be primitive in $KM$, but we shall use them to construct the primitive idempotents.

Fix for each $X\in \Lambda(B)$ an element $e_X$ with $X=Be_X$.  Define $\eta_X$
recursively by
\begin{equation}\label{eq:defidempotents}
\eta_X =e_X-\sum_{Y<X}e_X\eta_Y.
\end{equation}
Observe that, by induction, one can write
\begin{align*}
    \eta_X=\sum_{b\in B} c_bb
\end{align*}
with the $c_b$ integers such that $e_X\geq b$ for all $b$ with $c_b\neq 0$ and
the coefficient of $e_X$ in $\eta_X$ is $1$.  Also note that by construction \[e_X=\sum_{Y\leq X}e_X\eta_Y.\]  In particular, since $e_{B}=1$, we have that $1=\sum_{X\in \Lambda(B)}\eta_X$.  Also, by construction, $e_X\eta_X=\eta_X=\eta_Xe_X$.

We refer the reader to~\cite[Theorem~4.2]{ourmemoirs} for a proof of the following.

\begin{Prop}\label{p:primidempotentprops}
The elements $\{\eta_X\}_{X\in \Lambda(B)}$ have the following properties.
\begin{enumerate}
\item If $a\in B$ and $X\in \Lambda(B)$ are such that $a\notin B_{\geq X}$, then
    $a\eta_X=0$.
\item The collection $\{\eta_X\mid X\in \Lambda(B)\}$ is a complete set of orthogonal primitive idempotents of $KB$.
\end{enumerate}
\end{Prop}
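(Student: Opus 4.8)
The plan is to deduce the whole proposition from part (1): once (1) is known, idempotency, orthogonality, primitivity and completeness all follow formally from (1) and the three identities recorded just before the statement, namely $e_X\eta_X=\eta_X=\eta_Xe_X$, the expansion $e_X=\sum_{Y\le X}e_X\eta_Y$, and $1=\sum_X\eta_X$. Throughout I would use the dictionary $a\in B_{\ge X}\iff\sigma(a)\ge X\iff e_Xa=e_X$ (immediate from Proposition~\ref{p:completeness}(5) and $\sigma(a)=Ba$), together with the fact that $\sigma\colon B\to\Lambda(B)$ is a homomorphism, so $\sigma(ab)=\sigma(a)\wedge\sigma(b)$. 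Conceptually $\sigma(\eta_X)$ is the $X$-th primitive (Möbius) idempotent of the commutative algebra $K\Lambda(B)$; but (1) is strictly sharper than the resulting mod-radical statement $\sigma(a\eta_X)=0$, so it must be proved by an honest computation in $KB$, and that is where the real work lies.

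For (1) I would induct on $X$ in $\Lambda(B)$, treating all $Y<X$ first; the base case is the minimum of $\Lambda(B)$, where $B_{\ge X}=B$ and there is nothing to prove. For the inductive step, take $a\notin B_{\ge X}$, set $b=ae_X$ and $W=\sigma(b)=\sigma(a)\wedge X$; since $\sigma(a)\not\ge X$ we have $W<X$. Expanding the recursion gives $a\eta_X=b-\sum_{Y<X}b\eta_Y$, and by the inductive hypothesis $b\eta_Y=0$ whenever $W=\sigma(b)\not\ge Y$, so the sum reduces to $\sum_{Y\le W}b\eta_Y$. The crux is the sub-lemma that $b=\sum_{Y\le W}b\eta_Y$ for any $b$ with $\sigma(b)=W$: here $b\mathrel{\mathscr L}e_W$ gives $b\in Be_W$ and hence $be_W=b$, so left-multiplying the recorded expansion $e_W=\sum_{Y\le W}e_W\eta_Y$ by $b$ yields $b=\sum_{Y\le W}b\eta_Y$. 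Then $a\eta_X=b-b=0$. I expect this collapse of the correction terms to be the main obstacle; everything after is bookkeeping.

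Idempotency and orthogonality then drop out. Every term of $\eta_X=\sum_bc_bb$ other than $e_X$ comes from $-\sum_{Y<X}e_X\eta_Y$ and so has $\sigma(b)\le Y<X$; thus $e_X$ is the unique term of support $X$ (with coefficient $1$, so $\eta_X\ne0$), and by (1) all other terms are annihilated on the right by $\eta_X$, giving $\eta_X^2=e_X\eta_X=\eta_X$. For orthogonality with $Z\ne W$: if $W\not\le Z$, then expanding $\eta_Z\eta_W$ and applying (1) to $\eta_W$ kills every term, since a surviving term would force $W\le\sigma(b)\le Z$. The remaining case $W<Z$ (write $X=W<Y=Z$) I would handle by a second induction on $Y$: in $\eta_Y\eta_X=e_Y\eta_X-\sum_{Z'<Y}e_Y\,\eta_{Z'}\eta_X$ the products $\eta_{Z'}\eta_X$ vanish for $X\not\le Z'$ by (1) and for $X<Z'$ by induction, leaving only $Z'=X$, which contributes $e_Y\eta_X^2=e_Y\eta_X$; hence $\eta_Y\eta_X=e_Y\eta_X-e_Y\eta_X=0$.

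Finally, for primitivity note that for $a\in B$ one has $\eta_Xa=\eta_X(e_Xa)=\eta_X$ when $\sigma(a)\ge X$ (using $\eta_X=\eta_Xe_X$ and $e_Xa=e_X$), and $a\eta_X=0$ when $\sigma(a)\not\ge X$ by (1); therefore $\eta_Xa\eta_X\in\{0,\eta_X\}$ for every $a\in B$, so $\eta_XKB\eta_X=K\eta_X\cong K$ is local and $\eta_X$ is primitive. Completeness is the recorded identity $1=\sum_X\eta_X$, so $\{\eta_X\}_{X\in\Lambda(B)}$ is a complete set of orthogonal primitive idempotents, proving (2).
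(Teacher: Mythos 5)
Your proof is correct. Note that the paper itself contains no argument for this proposition---it simply defers to \cite[Theorem~4.2]{ourmemoirs}---so the comparison is really with that cited proof, and your write-up is essentially a faithful, self-contained reconstruction of it: the heart is the support-annihilation statement (1), proved by induction on $\Lambda(B)$ using exactly the collapse identity $b=\sum_{Y\le \sigma(b)}b\eta_Y$ (obtained from $be_W=b$ and the recorded expansion of $e_W$), after which idempotency, orthogonality (your two cases, the second by a secondary induction), and completeness follow formally. Your primitivity argument---showing $\eta_X a\eta_X\in\{0,\eta_X\}$ for every $a\in B$, hence $\eta_X\,KB\,\eta_X=K\eta_X$ is local---is a clean direct route; it is a pleasant alternative to deducing primitivity from the classification of simple $KB$-modules, and it uses nothing beyond (1), the dictionary $a\in B_{\ge X}\iff e_Xa=e_X$, and the identity $\eta_Xe_X=\eta_X$. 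All the auxiliary facts you invoke ($\sigma(ab)=\sigma(a)\wedge\sigma(b)$, the term structure of $\eta_X$ with $e_X$ appearing with coefficient $1$ and all other terms of strictly smaller support, and $1=\sum_X\eta_X$) are indeed recorded in the paper just before the statement, so the proposal stands on its own.
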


An immediate consequence of Proposition~\ref{p:primidempotentprops} is the following.

\begin{Cor}\label{c:annihilate.not.above}
Let $X\in \Lambda(B)$ and suppose that $m\in M\setminus M_{\geq X}$.  Then $m\eta_X=0$.
\end{Cor}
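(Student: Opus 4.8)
The plan is to reduce the monoid-level statement to the band-level annihilation already recorded in Proposition~\ref{p:primidempotentprops}(1). The obstruction is that $\eta_X$ lives in $KB$ while $m$ ranges over all of $M$, so I must manufacture, out of $m$, an \emph{idempotent} $g\in B$ lying below $X$ in $\Lambda(B)$ through which the product $m\eta_X$ factors. The entry point is the identity $e_X\eta_X=\eta_X$ established during the construction of the $\eta_X$, which lets me write $m\eta_X=me_X\eta_X$ and thereby replace the arbitrary element $m$ by the more tractable $me_X$.

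First I would analyze the single element $y:=me_X\in M$. Since $M$ is regular left duo, the discussion preceding Proposition~\ref{p:identify.poset} shows $y\in G_{y^{\omega}}$, so its idempotent power $g:=y^{\omega}=(me_X)^{\omega}\in B$ is a two-sided identity for $y$ inside the group $G_g$; in particular $yg=y$, that is, $me_Xg=me_X$. Inserting this factorization gives
\[
m\eta_X=me_X\eta_X=me_Xg\eta_X=me_X(g\eta_X),
\]
so it now suffices to prove $g\eta_X=0$, and for that, by Proposition~\ref{p:primidempotentprops}(1), it suffices to prove $g\notin B_{\geq X}$.

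Next I would compute $\sigma(g)$. Because $\sigma$ is a homomorphism to the meet-semilattice $\Lambda(B)$ (Proposition~\ref{p:completeness}(5)) and $\sigma(g)=\sigma(y^{\omega})=\sigma(y)$, I obtain $\sigma(g)=\sigma(me_X)=\sigma(m)\wedge\sigma(e_X)=\sigma(m)\wedge X$. This is $\leq X$, with equality precisely when $\sigma(m)\geq X$, i.e.\ precisely when $m\in M_{\geq X}$. Since by hypothesis $m\notin M_{\geq X}$, we get $\sigma(g)<X$, hence $g\notin B_{\geq X}$. Proposition~\ref{p:primidempotentprops}(1) then yields $g\eta_X=0$, and the displayed factorization gives $m\eta_X=0$.

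I expect the only genuinely delicate point to be the middle step: recognizing that $me_X$ must be absorbed by its own idempotent power on the \emph{right} (so that $g$ ends up adjacent to $\eta_X$), which is exactly where regularity enters, via $me_X\in G_{(me_X)^{\omega}}$. Everything else is bookkeeping with the semilattice homomorphism $\sigma$ together with the definitions of $M_{\geq X}$ and $B_{\geq X}$. One should also verify that $\sigma(e_X)=X$ under the identification $\Lambda(M)\cong\Lambda(B)$, which holds because $e_X$ is idempotent and $X=Be_X$.
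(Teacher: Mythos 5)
Your proof is correct and is in essence the paper's argument: both reduce to Proposition~\ref{p:primidempotentprops}(1) by inserting next to $\eta_X$ an idempotent of $B$ whose $\sigma$-value is not above $X$. The paper does this a bit more directly, without the detour through $e_X$: since $m\in G_{m^{\omega}}$ gives $m=mm^{\omega}$ and $\sigma(m^{\omega})=\sigma(m)\not\geq X$, one immediately gets $m\eta_X=mm^{\omega}\eta_X=0$.
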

\begin{proof}
Note that $m\eta_X=mm^{\omega}\eta_X =0$ by Proposition~\ref{p:primidempotentprops}, as $m^{\omega}\notin B_{\geq X}$.
\end{proof}

We now show that $KM\eta_X\cong KL_{e_X}$.

\begin{Prop}\label{p:schutz.proj}
Let $M$ be a regular left duo monoid with left regular band of idempotents $B$.   Let $\eta_X$ be as in \eqref{eq:defidempotents} and $I_X=M\setminus M_{\geq X}$.
\begin{enumerate}
  \item There is an isomorphism $KG_{e_X}\to \eta_XKM\eta_X$ given by $g\mapsto \eta_Xg\eta_X=g\eta_X$ with inverse $\eta_Xa\eta_X\mapsto \eta_Xa\eta_X+KI_X$ (identifying $KG_{e_X}$ with $e_X(KM/KI_X)e_X$).
  \item $KM\eta_X\cong KL_{e_X}$ as a $KM$-$KG_{e_X}$-bimodule via the above identification of $\eta_XKM\eta_X$ with $KG_{e_X}$.
\end{enumerate}
In particular, $KL_{e_X}$ is a projective module.
\end{Prop}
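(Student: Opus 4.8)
The plan is to prove (1) first by transporting the computation into the quotient algebra $KM/KI_X\cong KM_{\geq X}$, where $e_X$ generates the minimal ideal and $e_X(KM/KI_X)e_X\cong KG_{e_X}$ by Proposition~\ref{p:completeness}(7). Let $\pi\colon KM\to KM/KI_X$ be the quotient map. Two facts drive everything: first, $KI_X\eta_X=0$, which is exactly Corollary~\ref{c:annihilate.not.above}; and second, $\pi(\eta_X)=\pi(e_X)$. The latter follows from the recursion $\eta_X=e_X-\sum_{Y<X}e_X\eta_Y$ together with the observation (from Proposition~\ref{p:primidempotentprops}) that each $\eta_Y$ with $Y<X$ is supported on idempotents $b\leq e_Y$, all of which satisfy $\sigma(b)\leq Y<X$ and hence lie in $I_X$; thus $\pi(\eta_Y)=0$ for $Y<X$ and $\pi(\eta_X)=\pi(e_X)$. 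Since $\pi$ is a ring homomorphism, it therefore restricts to an algebra map $\eta_XKM\eta_X\to e_X(KM/KI_X)e_X\cong KG_{e_X}$ sending $\eta_Xa\eta_X\mapsto \eta_Xa\eta_X+KI_X$; this is the candidate inverse in the statement.

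Next I would establish the key identity $\eta_Xg\eta_X=g\eta_X$ for $g\in G_{e_X}$, which shows simultaneously that $g\mapsto g\eta_X$ lands in $\eta_XKM\eta_X$ and that it is multiplicative, since $(g\eta_X)(h\eta_X)=g(\eta_Xh\eta_X)=gh\eta_X$. To prove the identity, expand $\eta_Xg=e_Xg-\sum_{Y<X}e_X\eta_Yg=g-\sum_{Y<X}e_X\eta_Yg$ (using $e_Xg=g$), and note that for $Y<X$ each term $bg$ occurring in $\eta_Yg$ has $\sigma(bg)=\sigma(b)\wedge\sigma(g)=\sigma(b)\wedge X\leq Y<X$, so $\eta_Yg\in KI_X$ and hence $\eta_Yg\eta_X\in KI_X\eta_X=0$. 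With this in hand, $\pi(g\eta_X)=\pi(g)\pi(e_X)=\pi(g)$ shows the composite $\eta_XKM\eta_X\to KG_{e_X}\to \eta_XKM\eta_X$ restricts to the identity on $KG_{e_X}$. For the reverse composite I would show $\pi$ is injective on $\eta_XKM\eta_X$: if $\eta_Xa\eta_X\in KI_X=\ker\pi$, then right-multiplying by $\eta_X$ and using $KI_X\eta_X=0$ gives $\eta_Xa\eta_X=(\eta_Xa\eta_X)\eta_X=0$. Injectivity of $\pi$ on $\eta_XKM\eta_X$ together with the surjectivity of $g\mapsto g\eta_X$ onto its source yields that the two maps are mutually inverse, proving (1).

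For (2), I would first record that $L_{e_X}=\{m\mid Mm=Me_X\}=\sigma^{-1}(X)$, using $Mm=Mm^{\omega}$ and Proposition~\ref{p:completeness}(5). Define $\Psi\colon KL_{e_X}\to KM\eta_X$ on basis elements by $\Psi(x)=x\eta_X$, extended linearly. I would check it is a $KM$-$KG_{e_X}$-bimodule map: on the left, for $m\in M$ one has $m\cdot x=mx$ when $\sigma(m)\geq X$, whence $\Psi(mx)=m\Psi(x)$, and $m\cdot x=0$ otherwise, in which case $\sigma(mx)=\sigma(m)\wedge X<X$ forces $mx\in I_X$ and $m\Psi(x)=mx\eta_X\in KI_X\eta_X=0$; on the right, via the identification of (1), $\Psi(x)\cdot g=(x\eta_X)(g\eta_X)=x\eta_Xg\eta_X=xg\eta_X=\Psi(xg)$. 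Surjectivity is then clear: for any $m\in M$, if $\sigma(m)\geq X$ then $me_X\in L_{e_X}$ and $m\eta_X=me_X\eta_X=\Psi(me_X)$, while if $\sigma(m)\not\geq X$ then $m\in I_X$ and $m\eta_X=0$. Finally, because $\{\eta_X\}_{X\in\Lambda(B)}$ is a complete orthogonal set of idempotents of $KB$ with $1=\sum_X\eta_X$, we get $KM=\bigoplus_X KM\eta_X$, so $\sum_X\dim_K KM\eta_X=|M|=\sum_X|L_{e_X}|$; combined with the surjections $\Psi$ (giving $\dim_K KM\eta_X\leq|L_{e_X}|$) this forces equality, hence each $\Psi$ is injective and so an isomorphism.

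The projectivity of $KL_{e_X}$ is then immediate: $\eta_X$ is an idempotent of $KM$, so $KM\eta_X$ is a direct summand of the free module $KM$, and $KL_{e_X}\cong KM\eta_X$ is projective. I expect the main subtlety to be bookkeeping around the two distinct one-sided behaviours of $\eta_X$: one has $KI_X\eta_X=0$ but not in general $\eta_XKI_X=0$, so the arguments for injectivity of $\pi$ on $\eta_XKM\eta_X$ and for $\eta_Xg\eta_X=g\eta_X$ must each apply right-multiplication by $\eta_X$ at the right moment. The remainder is the routine verification that $\sigma$ is a meet-homomorphism, so that the support and annihilation conditions behave as claimed.
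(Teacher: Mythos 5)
Your proof is correct, and for part (1) it follows essentially the paper's own route: both arguments rest on the two facts $\eta_X+KI_X=e_X+KI_X$ and $KI_X\eta_X=0$ (Corollary~\ref{c:annihilate.not.above}), derive the key identity $\eta_Xg\eta_X=g\eta_X$, and get injectivity of the projection $\pi$ on $\eta_XKM\eta_X$ by right-multiplying by $\eta_X$; you merely spell out the support computations (via the meet-homomorphism property of $\sigma$ and the support condition in Proposition~\ref{p:primidempotentprops}) that the paper leaves implicit. Part (2) is where you genuinely diverge. The paper shows directly that $\pi$ carries $KM\eta_X$ isomorphically onto $(KM/KI_X)e_X\cong KL_{e_X}$, obtaining injectivity locally from the same trick ($a\eta_X\in KI_X$ implies $a\eta_X=(a\eta_X)\eta_X\in KI_X\eta_X=0$) and then verifying bimodule compatibility through $\pi$. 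You instead work with the inverse map $\Psi(x)=x\eta_X$, prove it is a surjective bimodule morphism, and get injectivity by a global count: since the $\eta_X$ are orthogonal idempotents summing to $1$, $KM=\bigoplus_X KM\eta_X$, while $|M|=\sum_X|L_{e_X}|$ because $L_{e_X}=\sigma^{-1}(X)$, so the surjections $\Psi$ force $\dim_K KM\eta_X=|L_{e_X}|$ for all $X$ simultaneously. Both arguments are valid for finite $M$ (the standing hypothesis); the paper's is local to each $X$ and slightly shorter, whereas yours has the pleasant byproduct of exhibiting all the spanning sets $\{m\eta_X\mid m\in L_{e_X}\}$ as bases at once, making the decomposition $KM\cong\bigoplus_X KL_{e_X}$ into projectives explicit.
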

\begin{proof}
First note that $\eta_X+KI_X=e_X+KI_X$.  It follows that if $g\in G_{e_X}$, then $\eta_Xg-g=\eta_Xg-e_Xg\in KI_X$.  Since $KI_X\eta_X=0$ by Corollary~\ref{c:annihilate.not.above}, we deduce that $\eta_Xg\eta_X=g\eta_X$.  It then follows that $g\mapsto \eta_Xg\eta_X$ induces a homomorphism $\rho\colon KG_{e_X}\to \eta_XKM\eta_X$.  Moreover, $\eta_XKM\eta_X\cong \eta_X(KM/KI_X)\eta_X=e_X(KM/KI_X)e_X=KG_{e_X}$ where the isomorphism follows from Corollary~\ref{c:annihilate.not.above}.   The composition of $\rho$ with the isomorphism $\eta_XKM\eta_X\cong KG_{e_X}$ in the previous sentence is the identity map, from which the first item follows.

 If $m\in M_{\geq X}$, then  $m\eta_X=me_X\eta_X$  and $me_X\in L_{e_X}$.   Therefore, in light of Corollary~\ref{c:annihilate.not.above},  $KM\eta_X$ is spanned over $K$ by the elements $m\eta_X$ with $m\in L_{e_X}$. Note that $KL_{e_X}\cong (KM/KI_X)e_X$ as $KM$-$KG_{e_X}$-bimodules. If $\pi\colon KM\to KM/KI_X$ is the projection, then $\pi(m\eta_X)=me_X$ for $m\in L_{e_X}$ as $\eta_X+KI_X=e_X+KI_X$.  It follows that $\pi$ takes $KM\eta_X$ isomorphically to $(KM/KI)e_X\cong KL_{e_X}$ as a $KM$-module.  Moreover, $\pi(m\eta_Xg\eta_X) = mg+KI_X$ for $g\in G_{e_X}$, and hence $\pi$ is a bimodule isomorphism, as required.
\end{proof}

Our next goal is to show that if $S$ is a simple $KG_{e_X}$-module with projective cover $P$, then $\Ind_{e_X}(P)$ is a projective indecomposable $KM$-module and is the projective cover of $S$ (viewed as a simple $KM$-module).

\begin{Cor}\label{c:proj.cover}
Let $M$ be a regular left duo monoid and $K$ a field.  Let $e\in E(M)$ be an idempotent and $S$ a simple $KG_e$-module with projective cover $P$.  Then $\Ind_e(P)$ is a projective indecomposable $KM$-module and is the projective cover of $S$, inflated to a $KM$-module via $\p_e$.
\end{Cor}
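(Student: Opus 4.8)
The plan is to combine Proposition~\ref{p:schutz.proj}, which realizes the bimodule $KL_e$ as a left ideal of $KM$ generated by an idempotent, with the standard description of a projective cover over the group algebra $KG_e$. Set $X=Be$. Since the representatives $\{e_X\}$ used in the construction preceding Proposition~\ref{p:schutz.proj} are arbitrary, I may take $e_X=e$, so that Proposition~\ref{p:schutz.proj} furnishes an idempotent $\eta_X\in KB$, a ring isomorphism $\rho\colon KG_e\xrightarrow{\sim}\eta_XKM\eta_X$ with $\rho(g)=g\eta_X$, and a $KM$-$KG_e$-bimodule isomorphism $KM\eta_X\cong KL_e$. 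Because $KG_e$ is a finite group algebra and $P$ is the projective cover of the simple module $S$, the module $P$ is indecomposable projective, so $P\cong KG_ef$ for a primitive idempotent $f\in KG_e$.

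The first step is to rewrite the induced module as a left ideal of $KM$ generated by an idempotent. Using the bimodule isomorphism and the canonical identification $N\otimes_{KG_e}KG_ef\cong Nf$ (the image of the right $KG_e$-action of $f$), I obtain
\[\Ind_e(P)=KL_e\otimes_{KG_e}P\cong KM\eta_X\otimes_{KG_e}KG_ef\cong (KM\eta_X)\cdot\rho(f)=KM\epsilon,\]
where $\epsilon=\rho(f)=f\eta_X$ is an idempotent of $KM$ lying in $\eta_XKM\eta_X$. In particular $\Ind_e(P)$ is a direct summand of $KM$, hence projective. To see it is indecomposable I would compute the corner ring $\epsilon KM\epsilon$: since $\epsilon=\eta_X\epsilon=\epsilon\eta_X$, one has $\epsilon KM\epsilon=\epsilon(\eta_XKM\eta_X)\epsilon$, and $\rho^{-1}$ carries this isomorphically onto $fKG_ef$, which is local because $f$ is a primitive idempotent of $KG_e$. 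Hence $\epsilon KM\epsilon$ is local, so $\epsilon$ is a primitive idempotent of $KM$ and $\Ind_e(P)\cong KM\epsilon$ is an indecomposable projective module with simple top.

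It remains to identify this top with $S$ inflated along $\p_e$, that is, with the simple module $\wh S$ attached to $S$ by the Clifford--Munn--Ponizovskii classification. Here I would use that $KL_e$ is free as a right $KG_e$-module, so that $\Ind_e=KL_e\otimes_{KG_e}-$ is exact; applying it to the projective cover surjection $P\twoheadrightarrow S$ gives a surjection $\Ind_e(P)\twoheadrightarrow\Ind_e(S)$. Since $\Ind_e(S)$ has simple top $\wh S$, composition yields a surjection $\Ind_e(P)\twoheadrightarrow\wh S$, and because $\Ind_e(P)$ already has a simple top, this top must be $\wh S$; an indecomposable projective with top $\wh S$ is precisely the projective cover of $\wh S$. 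The step I expect to demand the most care is the corner-ring identification $\epsilon KM\epsilon\cong fKG_ef$, as this is what transfers primitivity of $f$ in $KG_e$ to primitivity of $\epsilon$ in the much larger algebra $KM$, thereby delivering both the indecomposability and the simplicity of the top; everything else is formal once Proposition~\ref{p:schutz.proj} and the known structure of $\Ind_e(S)$ are in hand.
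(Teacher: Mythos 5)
Your proof is correct, but it gets to projectivity and indecomposability by a different mechanism than the paper. The paper's proof cites the general fact that $\Ind_e$ preserves indecomposability (\cite[Corollary~4.10]{repbook}), obtains projectivity by observing that $\Ind_e(P)$ is a direct summand of $\Ind_e(KG_e)=KL_e$, which is projective by Proposition~\ref{p:schutz.proj}, and then concludes exactly as you do, by surjecting $\Ind_e(P)$ onto the simple top of $\Ind_e(S)$ (the paper phrases this via the natural map $\Ind_e(S)\to \Coind_e(S)\cong S$, whose kernel is $\rad(\Ind_e(S))$, but this is the same step). You instead exploit Proposition~\ref{p:schutz.proj} more explicitly: taking $e_X=e$, you realize $\Ind_e(P)\cong KM\epsilon$ for the idempotent $\epsilon=\rho(f)$, which gives projectivity for free, and you get indecomposability from the corner-ring isomorphism $\epsilon KM\epsilon\cong fKG_ef$, which is local since $f$ is primitive in the finite-dimensional algebra $KG_e$. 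This is self-contained where the paper leans on a citation, and it in effect anticipates the paper's later Theorem~\ref{t:complete.set.left.duo.reg}: there the paper shows $KM\gamma_{X,i}\cong\Ind_{e_X}(KG_{e_X}\theta_{X,i})$ and deduces primitivity of $\gamma_{X,i}$ \emph{from} Corollary~\ref{c:proj.cover}, whereas your corner-ring argument establishes primitivity directly, so your route would also let one prove the theorem and the corollary in one stroke without circularity. The trade-off is that your argument requires tracking the right $KG_e$-module structure on $KM\eta_X$ through the identification $\eta_XKM\eta_X\cong KG_e$ carefully (which you do correctly), while the paper's is shorter given the cited background on induced modules.
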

\begin{proof}
It is a general fact that $\Ind_e$ preserves indecomposablity, cf.~\cite[Corollary~4.10]{repbook}.  Since $KL_e\cong KL_e\otimes_{KG_e} KG_e= \Ind_e(KG_e)$ and $P$ is a direct summand in $KG_e$, we deduce that $\Ind_e(P)$ is a direct summand in $KL_e$.  But $KL_e$ is projective by Proposition~\ref{p:schutz.proj}.  Thus $\Ind_e(P)$ is a projective indecomposable module and hence has simple top.  Composing the surjective homomorphisms $\Ind_e(P)\to \Ind_e(S)$ with the natural map $\Ind_e(S)\to \Coind_e(S)\cong S$ (whose kernel is $\rad(\Ind_e(S))$ by~\cite[Theorem~5.5]{repbook}), we deduce that there is a surjective homomorphism $\Ind_e(P)\to S$, and so $\Ind_e(P)$ is the projective cover of $S$.
\end{proof}

We are now prepared to construct a complete set of primitive orthogonal idempotents for $KM$ when $M$ is regular left duo (assuming we know how to do this for its maximal subgroups).

\begin{Thm}\label{t:complete.set.left.duo.reg}
Let $M$ be a finite regular left duo monoid with left regular band of idempotents $B$ and $K$ a field.  Let $\{\eta_X\mid X\in \Lambda(B)\}$ be as per \eqref{eq:defidempotents}.  For each $X\in \Lambda(B)$, choose a complete set of orthogonal primitive idempotents $\theta_{X,1},\ldots, \theta_{X,k_X}$ summing up to $e_X$ in $KG_{e_X}\subseteq KM$.  Then the elements $\gamma_{X,i}=\eta_X\theta_{X,i}\eta_X$ of $KM$ form a complete set of orthogonal primitive idempotents of $KM$ with $KM\gamma_{X,i}$ the projective cover of $KG_{e_X}\theta_{X,i}/\rad(KG_{e_X})\theta_{X,i}$, inflated to $KM$.
\end{Thm}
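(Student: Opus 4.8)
The plan is to push the data $\theta_{X,i}$ through the corner-ring isomorphism of Proposition~\ref{p:schutz.proj}(1) and then recognise the resulting projective modules using Proposition~\ref{p:schutz.proj}(2) together with Corollary~\ref{c:proj.cover}.

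First I would fix, for each $X$, the ring isomorphism $\rho_X\colon KG_{e_X}\to\eta_XKM\eta_X$ of Proposition~\ref{p:schutz.proj}(1), given on group elements by $g\mapsto g\eta_X$ and hence on all of $KG_{e_X}$ by $a\mapsto\eta_Xa\eta_X$. It is unital, carrying the identity $e_X$ of $KG_{e_X}$ to $\eta_Xe_X\eta_X=\eta_X$, the identity of the corner ring $\eta_XKM\eta_X$ (whose product agrees with that of $KM$). Thus $\rho_X(\theta_{X,i})=\eta_X\theta_{X,i}\eta_X=\gamma_{X,i}$, and because a unital ring isomorphism preserves complete sets of orthogonal primitive idempotents, $\gamma_{X,1},\dots,\gamma_{X,k_X}$ is such a set for $\eta_XKM\eta_X$. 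In particular each $\gamma_{X,i}$ is idempotent, $\gamma_{X,i}\gamma_{X,j}=0$ for $i\neq j$, and $\sum_{i}\gamma_{X,i}=\rho_X(e_X)=\eta_X$.

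Next I would assemble the global statement from the properties of the $\eta_X$. Orthogonality across distinct blocks is immediate: for $X\neq Y$, Proposition~\ref{p:primidempotentprops}(2) gives $\eta_X\eta_Y=0$, so $\gamma_{X,i}\gamma_{Y,j}=\eta_X\theta_{X,i}(\eta_X\eta_Y)\theta_{Y,j}\eta_Y=0$; and summing the block totals yields $\sum_{X,i}\gamma_{X,i}=\sum_X\eta_X=1$ by the construction of the $\eta_X$. The only point needing care is \emph{primitivity in $KM$} rather than merely in the corner ring. For this I would invoke the standard identity $\gamma_{X,i}KM\gamma_{X,i}=\gamma_{X,i}(\eta_XKM\eta_X)\gamma_{X,i}$, valid since $\gamma_{X,i}=\eta_X\gamma_{X,i}\eta_X$; as $KM$ is finite dimensional, $\gamma_{X,i}$ is primitive precisely when this corner is a local ring, and the latter holds because $\gamma_{X,i}$ is already primitive in $\eta_XKM\eta_X$.

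Finally, for the identification of $KM\gamma_{X,i}$ I would use the bimodule isomorphism $KM\eta_X\cong KL_{e_X}$ of Proposition~\ref{p:schutz.proj}(2). The orthogonal decomposition $\eta_X=\sum_i\gamma_{X,i}$ gives $KM\eta_X=\bigoplus_iKM\gamma_{X,i}$, and, writing the right $KG_{e_X}$-action through $\rho_X$, one has $KM\gamma_{X,i}=(KM\eta_X)\cdot\theta_{X,i}$. Transporting along the bimodule isomorphism sends this summand to $KL_{e_X}\theta_{X,i}$, and the identification $KL_{e_X}\cong\Ind_{e_X}(KG_{e_X})$ from the proof of Corollary~\ref{c:proj.cover} then yields $KM\gamma_{X,i}\cong\Ind_{e_X}(KG_{e_X}\theta_{X,i})$. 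Since $\theta_{X,i}$ is primitive, $KG_{e_X}\theta_{X,i}$ is the projective cover of $KG_{e_X}\theta_{X,i}/\rad(KG_{e_X})\theta_{X,i}$, so Corollary~\ref{c:proj.cover} identifies $KM\gamma_{X,i}$ with the projective cover of that simple module inflated to $KM$. The main obstacle is the bookkeeping in this last paragraph: one must check that the right $KG_{e_X}$-module structure on $KM\eta_X$ coming from the corner isomorphism corresponds under $KM\eta_X\cong KL_{e_X}$ to ordinary right multiplication, so that applying the idempotent $\theta_{X,i}$ really extracts the induced summand $\Ind_{e_X}(KG_{e_X}\theta_{X,i})$ and not some other direct summand of $KL_{e_X}$.
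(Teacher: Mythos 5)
Your proof is correct and follows essentially the same route as the paper's: both transfer the idempotents $\theta_{X,i}$ through the corner isomorphism of Proposition~\ref{p:schutz.proj}(1), verify completeness and orthogonality from $\eta_X\eta_Y=0$ and $\sum_X\eta_X=1$, and identify $KM\gamma_{X,i}\cong\Ind_{e_X}(KG_{e_X}\theta_{X,i})$ via the bimodule isomorphism of Proposition~\ref{p:schutz.proj}(2) so that Corollary~\ref{c:proj.cover} yields the projective cover statement. The only minor differences are that you obtain primitivity abstractly from locality of the corner ring $\gamma_{X,i}KM\gamma_{X,i}=\gamma_{X,i}(\eta_XKM\eta_X)\gamma_{X,i}$ whereas the paper deduces it from indecomposability of the induced module, and the ``main obstacle'' you flag at the end is not actually a gap: the compatibility of the right $KG_{e_X}$-action with the isomorphism $KM\eta_X\cong KL_{e_X}$ is precisely what Proposition~\ref{p:schutz.proj}(2) asserts (and what its proof, via the projection $\pi\colon KM\to KM/KI_X$, establishes).
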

\begin{proof}
First note that since $\sum_{i=1}^{k_X} \theta_{X,i}=e_X$, we have $\sum_{X\in \Lambda(B)}\sum_{i=1}^{k_X}\gamma_{X,i} = \sum_{X\in \Lambda(B)}\eta_Xe_X\eta_X =\sum_{X\in \Lambda(B)}\eta_X=1$.

Next we prove that the $\gamma_{X,i}$ form an orthogonal set of idempotents.
Clearly, if $X\neq Y$, then $\gamma_{X,i}\gamma_{Y,j} =\eta_X\theta_{X,i}\eta_X\eta_Y\theta_{Y,j}\eta_Y=0$.  So it remains to show that $\gamma_{X,i}\gamma_{X,j}=\delta_{ij}\gamma_{X_i}$.  Note that $\gamma_{X,i}\gamma_{X,j}=\eta_X\theta_{X,i}\eta_X\theta_{X,j}\eta_X=\eta_X\theta_{X,i}\theta_{X,j}\eta_X$ by Proposition~\ref{p:schutz.proj}(1).  We conclude that $\eta_X\theta_{X,i}\eta_X\theta_{X,j}\eta_X=\delta_{ij}\eta_X\theta_{X,i}\eta_X$.

To complete the proof, it suffices to show that $KM\gamma_{X,i}\cong \Ind_{e_X}(KG_{e_X}\theta_{X,i})$ by Corollary~\ref{c:proj.cover}.  By Proposition~\ref{p:schutz.proj}, we have a bimodule isomorphism $KM\eta_X\to KL_{e_X}$.  Let $I_X=M\setminus M_{\geq X}$.  The proof shows that the isomorphism is induced by the projection $\pi\colon KM\to KM/KI_X$, which sends $KM\eta_X$ to $(KM/KI_X)\eta_X=(KM/KI_X)e_X\cong KL_{e_X}$. Now $\pi$ restricts to an isomorphism of $KM\gamma_{X,i}\leq KM\eta_X$ with its image.  But $\pi(\gamma_{X,i}) = \theta_{X,i}+KI_X$ as $\pi(\eta_X)=e_X+KI_X$.  Therefore, $\pi(KM\gamma_{X,i})=(KM/KI_X)e_X\theta_{X,i}$.  But $\Ind_{e_X}(KG_{e_X}\theta_{X,i})=KL_{e_X}\otimes_{KG_{e_X}}KG_{e_X}\theta_{X,i}\cong (KM/KI_X)e_X\otimes_{KG_{e_X}}KG_{e_X}\theta_{X,i}\cong (KM/KI_X)e_X\theta_{X,i}$ where the last isomorphism is via the multiplication map.  This completes the proof.
\end{proof}

\section{Applications to Hsiao's monoid model for the Mantaci-Reutenauer descent algebra}\label{s:Hsiao}

We now apply our results to compute $\mathrm{Ext}$ between simple modules for the algebra of Hsiao's monoid of ordered $G$-partitions~\cite{Hsiao} for a finite group $G$.  When $G$ is abelian, we are able to compute a quiver presentation for the algebra over algebraically closed fields of good characteristic, prove it is Koszul and show that its Koszul dual is the incidence algebra of a certain poset.  This generalizes the case when $G$ is trivial~\cite{Saliolahyperplane}.  The reader is referred to~\cite[Chapter~4.4]{ourmemoirs} or~\cite{BGSKoszul} for the definitions of a Koszul algebra and its Koszul dual.

\subsection{Quivers}
We begin this section by recalling some background on quivers and finite dimensional algebras.  The reader is referred to~\cite{assem,benson} for details.
Recall that a finite dimensional $K$-algebra $A$ is \emph{split basic} if $A/\rad(A)\cong K^n$ for some $n\geq 1$, i.e., each simple $A$-module is one-dimensional.  For example, the  algebra of a finite group over an algebraically closed field whose characteristic does not divide its order is split basic if and only if it is abelian.  If $M$ is right semicentral and  $K$ is an algebraically closed field whose characteristic does not divide the order of any maximal subgroup of $M$, then each simple module is inflated from the group algebra of a maximal subgroup, and hence $KM$ is split basic if and only if each maximal subgroup of $M$ is abelian.

Every split basic algebra is isomorphic to $KQ/I$ where $Q$ is a finite quiver and $I$ is an admissible ideal.
A \emph{quiver} $Q$ is a directed multigraph (possibly with loops).  All quivers shall be assumed finite. The vertex set of $Q$ is denoted $Q_0$ and the edge set $Q_1$. More generally, $Q_n$ will denote the set of (directed) paths of length $n$ with $n\geq 0$ (where the distinction between empty paths and vertices is ignored). Denote by $Q^{op}$ the quiver obtained from $Q$ by reversing the orientation of each arrow.

The \emph{path algebra} $KQ$ of $Q$ has a basis consisting of all paths in $Q$ (including an empty path $\varepsilon_v$ at each vertex) with the product induced by concatenation (where undefined concatenations are set equal to zero).  Here we compose paths from right to left: so if $p\colon v\to w$ and $q\colon w\to z$ are paths, then their composition is denoted $qp$.   The identity of $K Q$ is the sum of all empty paths. Note that $KQ=\bigoplus_{n\geq 0} KQ_n$, and this provides a grading of $KQ$.

The arrow ideal $J$ is the ideal of $KQ$ generated by all edges.   So $J=\bigoplus_{n\geq 1} KQ_n$ is a graded ideal and $KQ/J\cong K^{Q_0}$ with the grading concentrated in degree $0$.  An ideal $I$ of $KQ$ is called \emph{admissible} if there exists $n\geq 2$ with $J^n\subseteq I\subseteq J^2$.   A pair $(Q,I)$ consisting of a quiver $Q$ and an admissible ideal $I$ of $KQ$ is called a \emph{bound quiver}.  If $(Q,I)$ is a bound quiver, then $KQ/I$ is a split basic finite dimensional algebra with Jacobson radical $J/I$. If the ideal $I$ is homogeneous, then $KQ/I$ is graded.   Conversely, if $A$ is a split basic $K$-algebra, there is a unique (up-to-isomorphism) quiver $Q(A)$ such that $A\cong KQ(A)/I$ for some admissible ideal $I$ (the ideal $I$ is not unique).

The vertices of $Q(A)$ are the (isomorphism classes of) simple $A$-modules. If $S,S'$ are simple $A$-modules, then the number of directed edges $S\to S'$ in $Q(A)$ is $\dim_K\Ext^1_A(S,S')$.  To explain how the admissible ideal $I$ is obtained, fix a complete set of orthogonal primitive idempotents $E$ for $A$, and let $e_S\in E$ be the idempotent with $S\cong (A/\rad(A))e_S$. One has that \[\Ext^1_A(S,S')\cong e_{S'}[\rad(A)/\rad^2(A)]e_S\] as $K$-vector spaces~\cite{assem,benson}.  One can define a homomorphism $KQ(A)\to A$  by sending the empty path at $S$ to $e_S$ and by choosing a bijection of the set of  arrows $S\to S'$ with a subset of $e_{S'}\rad(A)e_S$ mapping to a basis of $e_{S'}[\rad(A)/\rad^2(A)]e_S$. The map is extended to paths of length greater than one in the obvious way.  Such a homomorphism is automatically surjective and the kernel is an admissible ideal. We remark that if there is no path of length greater than one from $S$ to $S'$ in $Q(A)$, then $e_{S'}\rad^2(A)e_S=0$ because $\rad(A)=J/I$. Similarly, if there is no path at all from $S$ to $S'$ through $S''$, then $e_{S'}\rad(A)e_{S''}\rad(A)e_S=e_{S'}(J/I)e_{S''}(J/I)e_S=0$.  Finally, we remark that if $S\neq S'$, then $e_{S'}Ae_S=e_{S'}\rad(A)e_S$~\cite{assem,benson}.   Note that if $(Q,I)$ is a bound quiver, then the quiver of $KQ/I$ is isomorphic to $Q$ with the vertex $v$ corresponding to the simple module $[(KQ/I)/(J/I)]\varepsilon_v$.

A \emph{system of relations} for an admissible ideal $I$ is a finite subset \[R\subseteq \bigcup_{v,w\in Q_0} \varepsilon_wI\varepsilon_v\] generating $I$ as an ideal. Every admissible ideal admits a system of relations $R$ and the pair $(Q,R)$ is called a \emph{quiver presentation} of $A$.  A system of relations $R$ is called \emph{minimal} if no proper  subset of $R$ generates the ideal $I$.   In the case that the quiver $Q$ is acyclic, one can use homological means to compute the size of a minimal system.     The following result combines~\cite[1.1 and 1.2]{Bongartz}. A detailed exposition can be found in~\cite[Theorem~4.1]{ourmemoirs}.

\begin{Thm}[Bongartz]\label{t:bongartz}
Let $(Q,I)$ be a bound quiver with $Q$ acyclic and let $A=KQ/I$.  For $v\in Q_0$, let $S_v$ be the simple $A$-module $(A/\rad(A))\varepsilon_v$.  Then, for any minimal system of relations $R$, one has that the cardinality of $R\cap \varepsilon_wI\varepsilon_v$ is $\dim_K\Ext^2_A(S_v,S_w)$.
\end{Thm}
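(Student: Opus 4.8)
The plan is to read off $\dim_K\Ext^2_A(S_v,S_w)$ from the first terms of the minimal projective resolution of the semisimple module $A/\rad(A)$ and then to match the answer, idempotent by idempotent, against a minimal system of relations. Write $A=KQ/I$, let $J$ be the arrow ideal of $KQ$, and set $\mathfrak r=\rad(A)=J/I$. Since $A$ is split basic, $A/\mathfrak r\cong K^{Q_0}=\bigoplus_{v\in Q_0}K\varepsilon_v$, so in the minimal projective resolution $P_\bullet$ of the left $A$-module $A/\mathfrak r$ every differential has image inside $\mathfrak rP_{n-1}$; consequently $\dim_K\Ext^n_A(S_v,S_w)$ equals the multiplicity of $A\varepsilon_w$ among the summands of $P_n$ resolving $S_v$, equivalently the multiplicity of $S_w$ in $\operatorname{top}\Omega^n$ over the source $v$. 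The theorem then reduces to two assertions: (A) $\dim_K\Ext^2_A(S_v,S_w)=\dim_K\varepsilon_w\bigl(I/(IJ+JI)\bigr)\varepsilon_v$; and (B) a subset $R\subseteq\bigcup_{v,w}\varepsilon_wI\varepsilon_v$ is a minimal system of relations exactly when its image is a $K$-basis of $I/(IJ+JI)$, and such $R$ can be chosen with $R\cap\varepsilon_wI\varepsilon_v$ mapping to a basis of $\varepsilon_w(I/(IJ+JI))\varepsilon_v$. Granting both, the stated count is immediate.

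For (A) I would build the start of the minimal resolution of $A/\mathfrak r$. The degree-one term uses $\mathfrak r/\mathfrak r^2\cong J/J^2$ (this is where $I\subseteq J^2$ enters), which as an $A/\mathfrak r$-bimodule is spanned by the arrows, recovering the familiar $\dim_K\Ext^1_A(S_v,S_w)=\#\{\text{arrows }v\to w\}$. The crux is the second syzygy. Because $KQ$ is hereditary, multiplication gives an isomorphism $KQ\otimes_{A/\mathfrak r}KQ_1\xrightarrow{\sim}J$ of $KQ$-bimodules; reducing modulo $I$ identifies the projective cover $P_1$ with $J/IJ$ and the map $P_1\to\mathfrak r$ with the projection $J/IJ\twoheadrightarrow J/I$. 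Hence $\Omega^2=\ker(P_1\to\mathfrak r)=I/IJ$, which is a genuine left $A$-module since $I\cdot I\subseteq IJ$ (again by $I\subseteq J^2$). Computing its top via $\mathfrak r\cdot(I/IJ)=(JI+IJ)/IJ$ yields $\Omega^2/\mathfrak r\Omega^2\cong I/(IJ+JI)$ as an $A/\mathfrak r$-bimodule; reading off multiplicities through this bimodule grading—the left action recording the target $w$ and the right action the source $v$—gives (A).

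For (B) I would run a graded Nakayama argument, available because $Q$ is acyclic and hence $J$ is nilpotent, say $J^m=0$. If $\langle R\rangle$ denotes the two-sided ideal of $KQ$ generated by $R$, then $\langle R\rangle=I$ iff $\langle R\rangle+(IJ+JI)=I$: substituting the latter identity into itself and iterating pushes the error term into $J^mI+IJ^m+\cdots$, which vanishes once $J^m=0$. Thus $R$ generates $I$ iff its image spans $I/(IJ+JI)$, and minimality corresponds to that image being a basis. Since the orthogonal idempotents $\varepsilon_v$ give $I=\bigoplus_{v,w}\varepsilon_wI\varepsilon_v$ and $IJ+JI$ respects this decomposition, $R$ may be chosen homogeneous for the $\varepsilon_w(-)\varepsilon_v$-grading, whence $|R\cap\varepsilon_wI\varepsilon_v|=\dim_K\varepsilon_w(I/(IJ+JI))\varepsilon_v$. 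Combining with (A) completes the proof.

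The main obstacle is the identification in (A) of the second syzygy and its top with $I/(IJ+JI)$: one must track the $A/\mathfrak r$-bimodule structure carefully so that the left and right idempotent actions align with the target and source of a relation, and must invoke $I\subseteq J^2$ at the two places where it is needed—to compute $\mathfrak r/\mathfrak r^2$ and to see that $I$ acts as zero on $I/IJ$. The Nakayama step in (B) is routine once acyclicity of $Q$ guarantees nilpotence of $J$.
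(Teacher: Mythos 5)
The paper itself does not prove this theorem: it is quoted from Bongartz (the paper cites [1.1 and 1.2] of that reference) and points to the memoir of Margolis--Saliola--Steinberg, Theorem~4.1, for a detailed exposition. Your argument is correct and is essentially that standard proof: the partial minimal resolution $0\to I/IJ\to J/IJ\to A\to A/\rad(A)\to 0$ (using $I\subseteq J^2$ to see that $J/IJ\to\rad(A)$ is a projective cover), giving $\Ext^2_A(S_v,S_w)\cong\varepsilon_w\bigl(I/(IJ+JI)\bigr)\varepsilon_v$, followed by the graded Nakayama argument, which needs nilpotence of $J$ and hence acyclicity of $Q$. One small adjustment: in step (B) you say $R$ ``may be chosen'' homogeneous for the $\varepsilon_w(-)\varepsilon_v$-decomposition, but the theorem asserts the count for \emph{every} minimal system of relations; this is not a gap, because the paper's definition of a system of relations already forces $R\subseteq\bigcup_{v,w}\varepsilon_wI\varepsilon_v$, so every minimal $R$ has image a basis consisting of homogeneous elements, and such a basis automatically meets each summand $\varepsilon_w\bigl(I/(IJ+JI)\bigr)\varepsilon_v$ in exactly $\dim_K$ of that summand many elements. (Also, the multiplication map $KQ\otimes_{KQ_0}KQ_1\to J$ is an isomorphism of $(KQ,KQ_0)$-bimodules rather than of $KQ$-bimodules, but this is only a labeling issue and does not affect the computation of the syzygy.)
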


\subsection{Quivers and $\Ext$ for left regular bands of groups}

A \emph{left regular band of groups} is a regular left duo monoid  $M$ in which Green's relation $\mathscr R$ is a congruence, that is, $aM=bM$ implies $acM=bcM$ for all $a,b,c\in M$.  In this case, $M$ has a split homomorphism to a left regular band such that the inverse image of each element is a group.  In~\cite{rrbg}, all regular left duo monoids were called mistakenly called left regular bands of groups.

Recall that if $M$ is a finite regular left duo monoid and $m\in M$, then $m\in G_{m^{\omega}}$ and $m^{\dagger}$ denotes the inverse of $m$ in $G_{m^{\omega}}$.

\begin{Prop}\label{p:lrbg.pi}
Let $M$ be a finite regular left duo monoid with left regular band of idempotents $B$.  Then the following are equivalent.
\begin{enumerate}
\item  $M$ is a left regular band of groups.
\item $(mn)^{\omega}=m^{\omega}n^{\omega}$ for all $m,n\in M$.
\item  $mem^{\dagger}=m^{\omega}e$ for all $m\in M$ and $e\in B$.
\end{enumerate}
\end{Prop}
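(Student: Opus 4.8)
The plan is to reduce all three equivalences to a single identity that holds in \emph{every} finite regular left duo monoid $M$, with no extra hypothesis, namely that
\[
m\R m^{\omega}\ \text{for all } m, \qquad mn^{\omega}m^{\dagger}=(mn)^{\omega}\ \text{for all } m,n,
\]
where $mn^{\omega}m^{\dagger}$ is the value of the endomorphism action of $m$ on the idempotent $n^{\omega}$ from Proposition~\ref{p:identify.poset}(1). The first assertion is immediate: $m=mm^{\omega}$, while $m^{\omega}=mm^{\dagger}\in mM$, so $mM=m^{\omega}M$. For the second I would invoke the $M$-equivariant order isomorphism $\lambda\colon B\to M/{\R}$, $\lambda(e)=eM$, of Proposition~\ref{p:identify.poset}(3). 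Since $mn^{\omega}m^{\dagger}=m\cdot n^{\omega}$ lies in $B$, equivariance gives $\lambda(m\cdot n^{\omega})=m(n^{\omega}M)=mn^{\omega}M=mnM$, the last step because $n^{\omega}M=nM$ forces $mn^{\omega}M=mnM$. As $mnM=(mn)^{\omega}M=\lambda((mn)^{\omega})$ and $\lambda$ is injective, the identity $m\cdot n^{\omega}=(mn)^{\omega}$ follows. This bridge identity is the only place where genuine work is needed; everything else is bookkeeping built on it together with Proposition~\ref{p:completeness}(8) (distinct idempotents generate distinct principal right ideals, so each $\R$-class contains a unique idempotent, which by the first assertion is $m^{\omega}$).

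Next I would prove (1)$\Leftrightarrow$(2). Condition (1) is exactly the statement that $\R$ is a right congruence, and $\R$ is automatically a left congruence (if $aM=bM$ then $caM=cbM$, as $a=bx$, $b=ay$ give $ca=cbx$, $cb=cay$). For (1)$\Rightarrow$(2), from $m\R m^{\omega}$ and $n\R n^{\omega}$ the right congruence yields $mn\R m^{\omega}n$ and the left congruence yields $m^{\omega}n\R m^{\omega}n^{\omega}$; since $m^{\omega}n^{\omega}$ is idempotent (a product of idempotents in the band $B$) and is $\R$-related to $mn$, uniqueness of the idempotent in the $\R$-class of $mn$ forces $m^{\omega}n^{\omega}=(mn)^{\omega}$, which is (2). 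For (2)$\Rightarrow$(1), note that $aM=bM$ is equivalent to $a^{\omega}=b^{\omega}$; then $(ac)^{\omega}=a^{\omega}c^{\omega}=b^{\omega}c^{\omega}=(bc)^{\omega}$ gives $acM=bcM$, so $\R$ is a right congruence.

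Finally I would prove (2)$\Leftrightarrow$(3) directly from the bridge identity. Specializing $n$ to an idempotent $e$ gives $mem^{\dagger}=(me)^{\omega}$, so (3) is precisely the assertion $(me)^{\omega}=m^{\omega}e$ for all $e\in B$; assuming (2) this is immediate since $e^{\omega}=e$. Conversely, assuming (3), I would take $e=n^{\omega}$ to obtain $(mn^{\omega})^{\omega}=m^{\omega}n^{\omega}$, and then use that $n^{\omega}M=nM$ implies $mn^{\omega}M=mnM$, so $mn^{\omega}\R mn$ and hence $(mn^{\omega})^{\omega}=(mn)^{\omega}$, recovering (2). The one step I expect to carry the weight is the bridge identity $mn^{\omega}m^{\dagger}=(mn)^{\omega}$; I favour deriving it through the equivariance of $\lambda$, but if one prefers a hands-on argument it can instead be checked that $mn^{\omega}m^{\dagger}$ is idempotent and generates the right ideal $mnM$, after which Proposition~\ref{p:completeness}(8) identifies it with $(mn)^{\omega}$.
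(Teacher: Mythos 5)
Your proposal is correct; every step checks out, including the unconditional ``bridge identity'' $mn^{\omega}m^{\dagger}=(mn)^{\omega}$, which indeed holds in any finite regular left duo monoid by exactly the argument you give (equivariance and injectivity of $\lambda$ from Proposition~\ref{p:identify.poset}, plus the automatic left-congruence property of $\mathscr{R}$). The difference from the paper is one of organization rather than of tools: both arguments rest on the same two ingredients, namely Proposition~\ref{p:completeness}(8) (each $\mathscr{R}$-class contains at most one idempotent) and the $M$-equivariant order isomorphism $\lambda$ of Proposition~\ref{p:identify.poset}. The paper proves the cycle (1)$\Rightarrow$(2)$\Rightarrow$(3)$\Rightarrow$(1); in particular it handles (3)$\Rightarrow$(1) directly, noting that $aM=bM$ forces $a^{\omega}=b^{\omega}$, hence $aea^{\dagger}=a^{\omega}e=b^{\omega}e=beb^{\dagger}$ with $e=m^{\omega}$, and equivariance of $\lambda$ then gives $amM=bmM$. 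You instead hub everything at (2): (1)$\Leftrightarrow$(2) by congruence bookkeeping and uniqueness of idempotents, and (2)$\Leftrightarrow$(3) because your bridge identity shows, with no hypothesis on $M$, that (3) is literally the statement $(me)^{\omega}=m^{\omega}e$ for all $e\in B$. What your organization buys is conceptual: the equivariance/injectivity argument, which the paper repeats implicitly inside two separate implications, is isolated once as a reusable lemma, and condition (3) is exposed as a statement purely about $\omega$'s. What the paper's cycle buys is brevity: it needs only three implications and never has to verify (2)$\Rightarrow$(1) separately. Your closing alternative (checking by hand that $mn^{\omega}m^{\dagger}$ is an idempotent generating $mnM$) is also fine, though the route through $\lambda$ is the cleaner of the two.
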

\begin{proof}
If $\mathscr R$ is a congruence, then $mM=m^{\omega}M$ implies $(mn)^{\omega}M=mnM=m^{\omega}nM=m^{\omega}n^{\omega}M$, and so $(mn)^{\omega}=m^{\omega}n^{\omega}$ by Proposition~\ref{p:completeness}(8). Therefore, (1) implies (2).  Assume that (2) holds.  Then $meM=(me)^{\omega}M = m^{\omega}eM$, and so $mem^\dagger = m^{\omega}e$ by Proposition~\ref{p:identify.poset}, yielding (3).  Finally, suppose (3) holds,  and let $aM=bM$ with $a,b\in M$ and $m\in M$.  Then $a^{\omega}M=aM=bM=b^{\omega}M$, and so $a^{\omega}=b^{\omega}$ by Proposition~\ref{p:completeness}(8).  Putting $e=m^{\omega}$, we have $aea^\dagger = a^{\omega}e=b^{\omega}e=beb^\dagger$ by assumption, and so $amM=a(eM)=b(eM)=bmM$ by Proposition~\ref{p:identify.poset}.
\end{proof}

It follows easily from Proposition~\ref{p:lrbg.pi} that the class of finite left regular bands of groups is closed under finite direct products, submonoids and homomorphic images as any homomorphism of finite monoids preserves $()^{\omega}$.   In particular, any submonoid of a direct product of a finite left regular band and a finite group is a left regular band of groups.  Hsiao's monoid~\cite{Hsiao} is an example of such.

 We now compute $\Ext$ between simple modules for a left regular band of groups over fields of good characteristic.  Put $\til\beta_q(X,K)=\dim_K \til H_q(X,K)$ for a finite simplicial complex $X$.

\begin{Thm}\label{t:lrbg.ext}
Let $M$ be a finite left regular band of groups with left regular band of idempotents $B$ and $K$ a field whose characteristic divides the order of no maximal subgroup of $M$.    Let $V,W$ be simple $KM$-modules with apexes $X,Y\in \Lambda(B)$, respectively.   Suppose that $Y=Bf$ with $f\in B$.  Then $\Ext^n_{KM}(V,W)=0$ unless $X\leq Y$.
\begin{enumerate}
	\item If $X=Y$, then $\Ext^n_{KM}(V,W)=0$ unless $n=0$ and $V\cong W$, in which case $\Ext^n_{KM}(V,W)\cong \End_{KG_f}(V)$.
\item If $X<Y$, then $\Ext^n_{KM}(V,W)\cong \Hom_{KG_f}(V,W)^{\til \beta_{n-1}(\Delta(\partial fB_{\geq X}),K)}$.
\end{enumerate}
\end{Thm}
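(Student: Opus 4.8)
The plan is to deduce both statements directly from Theorem~\ref{t:left.duo}, which already applies because a finite left regular band of groups is in particular a finite regular left duo monoid, and because the hypothesis that the characteristic of $K$ divides no maximal subgroup order entails in particular that it does not divide $|G_f|$. Granting this, the vanishing of $\Ext^n_{KM}(V,W)$ unless $X\leq Y$, together with item~(1), is immediate: the case $X=Y$ is exactly Theorem~\ref{t:left.duo}(1) under the good characteristic assumption, where the group cohomology $\Ext^n_{KG_f}(V,W)$ collapses by Maschke and Schur to $\End_{KG_f}(V)$ concentrated in degree $0$.

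The real content is item~(2), and the strategy is to begin from the isomorphism
\[\Ext^n_{KM}(V,W)\cong \Hom_{KG_f}(\til H_{n-1}(\Delta(\partial fB_{\geq X}),K),V^\ast\otimes_K W)\]
furnished by Theorem~\ref{t:left.duo}(4), and to show that the left regular band of groups hypothesis collapses the right-hand side into a power of $\Hom_{KG_f}(V,W)$. The crucial observation is that the conjugation action of $G_f$ on $\partial fB_{\geq X}$ is \emph{trivial}. Indeed, every $e\in \partial fB_{\geq X}$ lies in $fB$ and hence satisfies $fe=e$, that is, $e\leq f$; and every $g\in G_f$ satisfies $g^\omega=f$, since the idempotent power of a unit of $fMf$ is the identity $f$ of $G_f$. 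Proposition~\ref{p:lrbg.pi}(3) then gives $geg^\dagger=g^\omega e=fe=e$, so $g$ fixes each such $e$.

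Given this, I would proceed formally. Since $G_f$ fixes every vertex of the order complex $\Delta(\partial fB_{\geq X})$, it acts trivially on the complex, hence trivially on each reduced homology group; thus $\til H_{n-1}(\Delta(\partial fB_{\geq X}),K)$ is isomorphic, as a $KG_f$-module, to a direct sum of $\til\beta_{n-1}(\Delta(\partial fB_{\geq X}),K)$ copies of the trivial module $K$. Applying $\Hom_{KG_f}(-,V^\ast\otimes_K W)$ and using that $\Hom$ out of a finite direct sum is the corresponding direct sum yields
\[\Ext^n_{KM}(V,W)\cong \bigl(\Hom_{KG_f}(K,V^\ast\otimes_K W)\bigr)^{\til\beta_{n-1}(\Delta(\partial fB_{\geq X}),K)}.\]
Finally, $\Hom_{KG_f}(K,V^\ast\otimes_K W)$ is the space of $G_f$-invariants of $V^\ast\otimes_K W$, and under the isomorphism $V^\ast\otimes_K W\cong \Hom_K(V,W)$ with the conjugation action (recalled just before Proposition~\ref{p:ext.left.duo}) these invariants are precisely the $G_f$-equivariant maps $\Hom_{KG_f}(V,W)$. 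Substituting gives item~(2).

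I expect the only genuinely delicate point to be the verification that the $G_f$-action on $\partial fB_{\geq X}$ is trivial: this is exactly where the left regular band of groups hypothesis (via Proposition~\ref{p:lrbg.pi}(3)) is used, and it is what distinguishes this clean closed form from the general regular left duo case of Theorem~\ref{t:left.duo}(4), where the homology genuinely carries a nontrivial $G_f$-action and no such simplification is available. Everything after that observation is purely formal bookkeeping with trivial modules and invariants.
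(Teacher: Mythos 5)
Your proposal is correct and follows essentially the same route as the paper's proof: both rest on the observation that Proposition~\ref{p:lrbg.pi}(3) forces the conjugation action of $G_f$ on $\partial fB_{\geq X}$ to be trivial (so the homology is a trivial $KG_f$-module), after which Theorem~\ref{t:left.duo} and the identification $\Hom_{KG_f}(K,V^\ast\otimes_K W)\cong \Hom_{KG_f}(V,W)$ give the result. Your write-up merely spells out the intermediate bookkeeping (trivial action on vertices implies trivial action on homology, $\Hom$ out of a direct sum) that the paper leaves implicit.
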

\begin{proof}
The conjugation action of $G_f$ of $fB_{\geq X}$ is trivial by Proposition~\ref{p:lrbg.pi} since $geg\inv =geg^\dagger = g^{\omega}e=fe=e$ for all $g\in G_f$ and $e\in fB_{\geq X}$.  The result then follows immediately from Theorem~\ref{t:left.duo}, as $\Hom_{KG_f}(K,V^\ast\otimes_K W)\cong \Hom_{KG_f}(K,\Hom_K(V,W))\cong \Hom_{KG_f}(V,W)$.
\end{proof}

We now describe the quiver for the algebra of a left regular band  of abelian groups $M$ (i.e., a left regular band of groups with abelian maximal subgroups) over an algebraically closed field $K$ whose characteristic does not divide the order of any maximal subgroup of $M$.  These have split basic algebras over $K$.  Fix, for each $X\in \Lambda(B)$, an idempotent $e_X\in B$ with $Be_X=X$, where, as usual, $B$ denotes the left regular band of idempotents of $M$.  Let $G_X$ denote the maximal subgroup $G_{e_X}$ and put $\wh G_X=\Hom(G_X,K^\times)$, which is the dual group of $G$.  Then simple $KM$-modules are indexed by pairs $(X,\chi)$ with $\chi\in \wh G_X$ (this representation inflates $\chi$ to $M$ via the projection $\p_{e_X}\colon KM\to KG_X$).  Recall that if $X\in \Lambda(B)$, we have a retraction $\rho_X\colon M_{\geq X}\to G_X$ given by $\rho_X(m) = e_Xm$, which restricts to a group homomorphism $G_Y\to G_X$ for $X\leq Y$.  Let's define a partial order on the set $P(M)$ of such pairs by putting $(X,\chi)\leq (Y,\theta)$ if $X\leq Y$ and $\theta=\chi\circ\rho_X$.  This relation is clearly a reflexive and antisymmetric relation as $\rho_X$ is the identity on $G_X$.    Note that if $X\leq Y\leq Z$ and $g\in G_Z$, then $\rho_X\rho_Y(g) = e_Xe_Yg=e_Xg=\rho_X(g)$ since $X\leq Y$.  Thus, this yields transitivity as $(X,\chi)\leq (Y,\theta)\leq (Z,\tau)$ implies that $\chi\circ \rho_X|_{G_Z} = \chi\circ \rho_X|_{G_Y}\circ \rho_Y|_{G_Z} = \theta\circ \rho_Y|_{G_Z}=\tau$.   We can now describe the quiver of $KM$ in terms of this partial order.  This is a special case of a result of~\cite{rrbg}, but formulated in a cleaner way.

\begin{Cor}\label{c:quiver.lrbg}
Let $M$ be a finite left regular band of abelian groups with left regular band of idempotents $B$ and $K$ an algebraically closed field whose characteristic divides the order of no maximal subgroup of $M$.  Then there are no edges from $(X,\chi)$ to $(Y,\theta)$, unless $(X,\chi)<(Y,\theta)$, in which case there is one fewer edge from $(X,\chi)$ to $(Y,\theta)$ than connected components of $\partial e_YB_{\geq X}$.
\end{Cor}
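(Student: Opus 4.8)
The plan is to read off the edge count directly from Theorem~\ref{t:lrbg.ext}. Recall from the discussion of quivers above that, since $M$ has abelian maximal subgroups and $K$ is algebraically closed of good characteristic, $KM$ is split basic; hence each simple module is one-dimensional, and the number of arrows from $(X,\chi)$ to $(Y,\theta)$ equals $\dim_K\Ext^1_{KM}(V,W)$, where $V$ and $W$ are the simple modules indexed by $(X,\chi)$ and $(Y,\theta)$, with apexes $X$ and $Y$ respectively. I would first dispose of the degenerate cases using the vanishing in Theorem~\ref{t:lrbg.ext}: if $X\not\leq Y$ then $\Ext^1_{KM}(V,W)=0$, and if $X=Y$ then part~(1) shows that $\Ext^n$ is concentrated in degree $0$, so again $\Ext^1_{KM}(V,W)=0$. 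Thus no arrows occur unless $X<Y$.

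Assume now $X<Y$ and set $f=e_Y$. Applying Theorem~\ref{t:lrbg.ext}(2) with $n=1$ gives
\[
\Ext^1_{KM}(V,W)\cong \Hom_{KG_f}(V,W)^{\,\til\beta_0(\Delta(\partial fB_{\geq X}),K)}.
\]
The crux is the evaluation of $\Hom_{KG_f}(V,W)$, and this is where the combinatorial order on $P(M)$ enters. Here $W$ is the character $\theta$ of $G_f=G_Y$, while $V$, originally inflated from $KG_X$ via $\varphi_{e_X}$, must be viewed as a $KG_f$-module by restriction. Using that $\varphi_{e_X}$ agrees on the contraction $M_{\geq X}$ with the retraction $\rho_X\colon M_{\geq X}\to G_X$ (Proposition~\ref{p:completeness}(7)), restriction to $G_Y$ realizes $V$ as the character $g\mapsto \chi(\rho_X(g))=\chi(e_Xg)$, that is, $V\cong \chi\circ\rho_X|_{G_Y}$. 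Since $G_Y$ is abelian and $K$ is algebraically closed of good characteristic, both $V$ and $W$ are one-dimensional, so $\Hom_{KG_f}(V,W)$ is $K$ when these two characters of $G_Y$ coincide and is $0$ otherwise. By the definition of the partial order on $P(M)$, the equality $\theta=\chi\circ\rho_X$ together with $X<Y$ is exactly the assertion $(X,\chi)<(Y,\theta)$.

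Combining these observations, $\dim_K\Ext^1_{KM}(V,W)=\til\beta_0(\Delta(\partial e_YB_{\geq X}),K)$ when $(X,\chi)<(Y,\theta)$ and is $0$ in every other case. To finish I would translate the reduced Betti number into a component count: the order complex of a poset has one connected piece for each connected component of the poset, so $\til\beta_0(\Delta(\partial e_YB_{\geq X}),K)$ is one less than the number of connected components of $\partial e_YB_{\geq X}$, which is the stated ``one fewer edge than connected components.'' The main obstacle is the middle step---correctly identifying the restricted $KG_f$-module structure of $V$ as $\chi\circ\rho_X$---since it is precisely this identification that matches the purely homological $\Hom$-space to the order defining $P(M)$; once it is established the remainder is routine bookkeeping with reduced homology (and one should note that $X<Y$ guarantees $\partial e_YB_{\geq X}\neq\emptyset$, for instance because the idempotent $e_Ye_X$ lies in it, so the component count is well behaved).
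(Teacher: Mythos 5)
Your proposal is correct and follows essentially the same route as the paper's own (much terser) proof: both reduce everything to Theorem~\ref{t:lrbg.ext}, identify the $KG_Y$-module structure of $V$ as the character $\chi\circ\rho_X$, and apply Schur's lemma for one-dimensional characters to see that $\Hom_{KG_Y}(V,W)$ is $K$ exactly when $(X,\chi)<(Y,\theta)$ and $0$ otherwise. The extra bookkeeping you supply (split basicity, $\til\beta_0$ equals the number of components minus one, and non-emptiness of $\partial e_YB_{\geq X}$ via $e_Ye_X$) is exactly what the paper leaves implicit.
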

\begin{proof}
This is almost immediate from Theorem~\ref{t:lrbg.ext} once we observe that $\chi$ inflates to $\chi\circ\rho_X$ as a representation of $G_Y$, and  $\Hom_{KG_Y}(\chi\circ\rho_X,\theta)=0$ unless $\chi\circ \rho_X=\theta$, in which case it is one-dimensional.
\end{proof}

The following observation will play a crucial role in understanding how the poset $P(M)$ relates to $\Lambda(B)$.  If $P$ is a poset and $p\in P$, we put $P_{\geq p}=\{q\in P\mid q\geq p\}$.

\begin{Prop}\label{p:look.up}
Let $M$ be a finite left regular band of abelian groups with left regular band of idempotents $B$ and $K$ an algebraically closed field whose characteristic does not divide the order of any maximal subgroup of $M$.  Consider the poset $P(M)$ as defined above, and let $(X,\chi)\in P(M)$.  Then $P(M)_{\geq (X,\chi)}\cong \Lambda(B)_{\geq X}$ as posets.
\end{Prop}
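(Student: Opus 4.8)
The plan is to produce an explicit order isomorphism and verify it by hand. Define the forgetful map $\pi\colon P(M)_{\geq(X,\chi)}\to\Lambda(B)_{\geq X}$ by $\pi(Y,\theta)=Y$. First I would check that $\pi$ is well defined: by the definition of $\leq$ on $P(M)$, every $(Y,\theta)\geq(X,\chi)$ satisfies $Y\geq X$, hence $Y\in\Lambda(B)_{\geq X}$.

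Next I would construct the inverse. For $Y\in\Lambda(B)_{\geq X}$, set $\theta_Y=\chi\circ\rho_X|_{G_Y}\in\wh G_Y$; then $(Y,\theta_Y)\in P(M)$ and, straight from the definition of the order, $(Y,\theta_Y)\geq(X,\chi)$. Conversely, if $(Y,\theta)\geq(X,\chi)$, the definition of $\leq$ forces $\theta=\chi\circ\rho_X|_{G_Y}=\theta_Y$. Thus the character component is uniquely determined by $Y$, so $\pi$ is a bijection with inverse $Y\mapsto(Y,\theta_Y)$.

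It remains to check that $\pi$ and its inverse preserve order. One direction is immediate: if $(Y,\theta)\leq(Z,\tau)$ then $Y\leq Z$ by definition. For the other direction, suppose $X\leq Y\leq Z$ and put $\theta=\chi\circ\rho_X|_{G_Y}$ and $\tau=\chi\circ\rho_X|_{G_Z}$; I must verify $(Y,\theta)\leq(Z,\tau)$, i.e.\ that $\tau=\theta\circ\rho_Y|_{G_Z}$. This is where the one genuine input enters, namely the compatibility $\rho_X\circ\rho_Y|_{G_Z}=\rho_X|_{G_Z}$, which is exactly the computation $e_Xe_Yg=e_Xg$ (for $g\in G_Z$, using $e_Xe_Y=e_X$ since $X\leq Y$) carried out in the transitivity check for the order on $P(M)$ just before the proposition. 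Granting it, $\theta\circ\rho_Y|_{G_Z}=\chi\circ\rho_X|_{G_Y}\circ\rho_Y|_{G_Z}=\chi\circ\rho_X|_{G_Z}=\tau$, as needed.

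I do not anticipate a serious obstacle: the proposition is essentially a repackaging of the cocycle-type identity $\rho_X\circ\rho_Y=\rho_X$ that has already been established. The only care required is bookkeeping, namely tracking which maximal subgroup each character lives on, confirming that the candidate inverse genuinely lands in $P(M)$, and noting that $G_Z\subseteq M_{\geq Y}\subseteq M_{\geq X}$ so that the successive retractions $\rho_Y$ and then $\rho_X$ are defined on $G_Z$.
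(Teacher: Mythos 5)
Your proof is correct and is essentially the paper's own argument: the paper likewise uses the first-coordinate projection as the order-preserving map, with inverse $Y\mapsto(Y,\chi\circ\rho_X)$, and verifies order preservation of the inverse via the same identity $\chi\circ\rho_X|_{G_Y}\circ\rho_Y|_{G_Z}=\chi\circ\rho_X|_{G_Z}$ established in the transitivity check preceding the proposition. Your only addition is making the bijectivity bookkeeping (uniqueness of the character component over each $Y$) explicit, which the paper leaves implicit.
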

\begin{proof}
Clearly the projection to the first coordinate is an order-preserving map $P(M)_{\geq (X,\chi)}\to \Lambda(B)_{\geq X}$.  The inverse order-preserving map takes $Y\geq X$ to $(Y,\chi\circ \rho_X)$.  This is order preserving since if $Y\leq Z$, then $\chi\circ \rho_X|_{G_Y}\circ\rho_Y|_{G_Z} =\chi\circ\rho_X|_{G_Z}$.
\end{proof}

Recall that a poset is \emph{ranked} or \emph{graded}, if given $p<q$, all maximal chains between $p$ and $q$ have the same length.  This means that the order complex of the open interval $(p,q)$ is a pure simplicial complex (that is, all maximal faces have the same dimension).  We define the \emph{rank} $\rk[p,q]$  of the closed interval $[p,q]$ to be the length of all maximal chains between $p$ and $q$, that is, the dimension of the order complex of $[p,q]$.  It follows from Proposition~\ref{p:look.up} that each closed (respectively, open) interval in $P(M)$ is isomorphic to a closed (respectively, open) interval in $\Lambda(B)$.  Hence, if $\Lambda(B)$ is graded, then $P(M)$ is also graded.  If each open interval of $\Lambda(B)$ is Cohen-Macaulay (see~\cite{topmethods} for the definition), then the same is true for $P(M)$.

\subsection{Quivers and $\Ext$ for CW left regular bands of groups and Hsiao's monoid}
  We  recall the hyperplane face monoid associated to the braid arrangement, using a combinatorial description due to Bidigare~\cite{BidigareThesis}.  Details can also be found in~\cite{Brown2} and~\cite[Chapter~14]{repbook}.  Let $\Sigma_n$ denote the set of all ordered set partitions of $\{1,\ldots, n\}$.  Then $\Sigma_n$ has the structure of a left regular band with respect to the multiplication
\[(P_1,\ldots, P_r)(Q_1,\ldots Q_s) = (P_1\cap Q_1,\ldots, P_1\cap Q_s, P_2\cap Q_1,\ldots, P_r\cap Q_s)^{\wedge}\]
where $\wedge$ means to omit empty intersections.  The identity is the ordered set partition with one block.
Two ordered set partitions generate the same left ideal if and only if they have the same underlying set partition and, in fact, $\Lambda(\Sigma_n)$ is isomorphic to the lattice $\Pi_n$ of set partitions ordered so that coarser partitions are larger.  We make this identification from now on and write $\sigma(\pi)$ for the underlying set partition of $\pi\in \Sigma_n$.

If $X<Y$ in $\Pi_n$ and $\pi\in \Sigma_n$ with $\sigma(\pi)=Y$, then $\Delta(\partial \pi(\Sigma_n)_{\geq X})$ is known to be a triangulation of an $(|X|-|Y|-1)$-sphere, where if $Z$ is a set partition, then $|Z|$ is the number of blocks of $Z$; see~\cite[Section~4.8]{MSS} and~\cite[Proposition~3.16]{ourmemoirs} for details.

There is a natural action of the symmetric group $S_n$ on $\Sigma_n$ by automorphisms and Bidigare~\cite{BidigareThesis} observed that $(K\Sigma_n)^{S_n}$ is anti-isomorphic to Solomon's descent algebra~\cite{SolomonDescent} for $S_n$.  Hsiao~\cite{Hsiao} generalized this approach to realize the Mantaci-Reutenauer descent algebra~\cite{MantReut} for a wreath product $G\wr S_n$, with $G$ a finite group, as an algebra of invariants of a monoid algebra.

So let $G$ be a finite group.  Then Hsiao's monoid $\Sigma_n^G$ of \emph{ordered $G$-partitions} is the submonoid of $\Sigma_n\times G^n$ consisting of all pairs $(\pi, g)$ with $\pi\in \Sigma_n$ and $g\in G^n$ such that $g_i=g_j$ whenever $i,j$ are in the same block of $\sigma(\pi)$.  Hsiao's monoid is described in a slightly different way in~\cite{Hsiao} and~\cite{rrbg}, but it is easy to see that this description gives an isomorphic monoid.  As a submonoid of a direct product of a finite left regular band with a finite group, we see that $\Sigma_n^G$ is a left regular band of groups.  Hsiao~\cite{Hsiao} showed that there is a natural action of $S_n$ by automorphisms on $\Sigma_n^G$ and that $(K\Sigma_n^G)^{S_n}$ is anti-isomorphic to the Mantaci-Reutenauer descent algebra for $G\wr S_n$~\cite{MantReut}.  Saliola~\cite{SaliolaDescent} used the description~\cite{BidigareThesis} of Solomon's descent algebra~\cite{SolomonDescent} as the invariant subalgebra $(K\Sigma_n)^{S_n}$ to make progress on understanding the descent algebra, in part because he obtained a quiver presentation of $K\Sigma_n$~\cite{Saliolahyperplane}.   For $G$ abelian and $K$ algebraically closed of characteristic not dividing $|G|$, we shall compute a quiver presentation for $K\Sigma_n^G$, generalizing that of $K\Sigma_n$.  One can then hope to apply Saliola's program to obtain a better understanding of the Mantaci-Reutenauer descent algebra.

If $X\in \Pi_n$, let $G_X$ be the subgroup of $G^n$ consisting of those elements such that $g_i=g_j$ whenever $i,j$ are in the same block of $X$; of course, $G_X\cong G^{|X|}$.  Identifying $\Sigma_n$ with $\Sigma_n\times \{1\}$,  if $\sigma(\pi)=X$, then $G_{\pi} = \{\pi\}\times G_X$; hence we can identify $G_{\pi}$ with $G_X$ canonically via the projection to the second coordinate, which we do from now on.  Also note that if $X\leq Y$, then $G_Y\leq G_X$.

The key features of the representation theory of $\Sigma_n^G$ apply to a more general class of left regular bands of groups.  We first recall the notion of a CW left regular band from~\cite{ourmemoirs}.  A finite poset $P$ is a \emph{CW poset} if it is the face poset of a regular CW complex (i.e., the set of open cells ordered by $\sigma\leq \tau$ if $\sigma \subseteq \ov \tau$). CW posets can be axiomatized~\cite{BjornerCW} as graded posets $P$ such that $\Delta(P_{<p})$ is a triangulation of sphere of dimension $\dim(\Delta(P_{<p}))$,  where $P_{<p} = \{q\in P\mid q<p\}$.  Note that $\Delta(P_{\leq p})$ is then a triangulation of a ball since it is a cone on $\Delta(P_{<p})$, where $P_{\leq p}=\{q\in P\mid q\leq p\}$.
A left regular band $B$ is a CW left regular band if $B_{\geq X}$ is a CW poset for all $X\in \Lambda(B)$. If $B$ is a monoid, as will be the case in this paper, we must then have $B_{\geq X}$ is the face poset of a regular CW decomposition of a ball (since $B_{\geq X}$ has a maximum element $1$).  In general, $eB_{\geq X}$ is the face poset a regular CW decomposition of a ball and $\partial eB_{\geq X}$ is the face poset of a regular CW decomposition of the boundary sphere of this ball, whence the notation.   For example, $\Sigma_n$ is a CW left regular band; each $(\Sigma_n)_{\geq X}$ is the face poset of a permutohedron.  More generally, the face semigroup of any hyperplane arrangement or the monoid of covectors of an oriented matroid is a CW left regular band.  If $B$ is a CW left regular band, then $\Lambda(B)$ is a graded poset~\cite[Theorem~6.2]{ourmemoirs}.  Moreover, if $X\leq Y$, then $e_YB_{\geq X}$ is the face poset of a ball of dimension $\rk[X,Y]$.   Further examples and details can be found in~\cite{ourmemoirs}.

Let us say that a left regular band of groups $M$ is a \emph{CW left regular band of groups} if its idempotents $B$ form a CW left regular band.  For example, Hsiao's monoid $\Sigma_n^G$ is a CW left regular band of groups.  We shall compute all $\Ext$ between simple modules for a CW left regular band of groups and compute a quiver presentation for a CW left regular band of abelian groups over algebraically closed fields of good characteristic.  This will include $\Sigma_n^G$ for $G$ abelian.  We will also show these algebras are Koszul and recognize their Koszul duals as incidence algebras of posets.  In a later section, we shall provide explicit minimal projective resolutions of all the simple modules (in good characteristic) using the action of the monoid on its associated $CW$-complexes.

\begin{Thm}\label{t:first.stab}
Let $M$ be a CW left regular band of groups with CW left regular band of idempotents $B$ and $K$ a field.  Fix an idempotent $e_X$ with $Be_X=X$ for each $X\in \Lambda(B)$ and put $G_X = G_{e_X}$.   If $V,W$ are simple $KG_X$-, $KG_Y$-modules, respectively, inflated to $KM$-modules in the usual way,  then $\Ext^r_{KM}(V,W)=0$ unless $X\leq Y$ and $r\geq \rk[X,Y]$, in which case
\[\Ext^r_{KM}(V,W) \cong \Ext^{r-\rk[X,Y]}_{KG_Y}(V,W)\] where we note that $V$ is $KG_Y$-module via inflation along the homomorphism $\rho_X(g)=e_Xg$.  In particular, if the characteristic of $K$ does not divide $|G|$, then $\Ext^r_{K\Sigma_n^G}(V,W)=0$ unless $X\leq Y$ and $r=\rk[X,Y]$, in which case $\Ext^r_{K}(V,W)\cong \Hom_{KG_Y}(V,W)$.
\end{Thm}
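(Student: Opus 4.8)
The plan is to reduce everything to Theorem~\ref{t:left.duo} and then use the CW hypothesis to degenerate the spectral sequence occurring there. A left regular band of groups is by definition a finite regular left duo monoid, so Theorem~\ref{t:left.duo} applies directly to the inflations $V,W$ of the given simples; their apexes are $X$ and $Y$. This gives the vanishing of $\Ext^r_{KM}(V,W)$ for $X\nleq Y$, and in the case $X=Y$ part~(1) gives $\Ext^r_{KM}(V,W)\cong\Ext^r_{KG_Y}(V,W)$, which is the claimed formula since $\rk[X,Y]=0$. So I would focus on the case $X<Y$, taking $f=e_Y$.

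For $X<Y$ I would invoke the spectral sequence of Theorem~\ref{t:left.duo}(3),
\[\Ext^p_{KG_Y}(\til H_{q-1}(\Delta(\partial e_YB_{\geq X}),K),V^\ast\otimes_K W)\Rightarrow_p\Ext^n_{KM}(V,W),\]
and exploit that, for a CW left regular band, $\Delta(\partial e_YB_{\geq X})$ triangulates a sphere of dimension $\rk[X,Y]-1$. Over any field its reduced homology is therefore $K$ in degree $\rk[X,Y]-1$ and zero elsewhere; writing $d=\rk[X,Y]$, the $E_2$-page is concentrated in the single row $q=d$, so the sequence degenerates and
\[\Ext^n_{KM}(V,W)\cong\Ext^{n-d}_{KG_Y}(K,V^\ast\otimes_K W),\]
which in particular vanishes for $n<d$. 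Crucially, because $M$ is a band \emph{of groups}, Proposition~\ref{p:lrbg.pi} makes the conjugation action of $G_Y$ on $e_YB_{\geq X}$ trivial ($geg^\dagger=g^\omega e=e$), so the coefficient homology is the genuinely trivial module $K$ and the first slot above really is untwisted.

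The remaining step is the standard identification $\Ext^i_{KG_Y}(K,V^\ast\otimes_K W)\cong\Ext^i_{KG_Y}(V,W)$: since $V^\ast\otimes_K W\cong\Hom_K(V,W)$ with conjugation action and $V$ is finite dimensional, I would take a projective $KG_Y$-resolution $P_\bullet\to K$, note that $P_\bullet\otimes_K V$ (diagonal action) is a projective resolution of $V$, and apply the adjunction $\Hom_{KG_Y}(P_j\otimes_K V,W)\cong\Hom_{KG_Y}(P_j,\Hom_K(V,W))$ to match the two $\Ext$-groups in each degree; here $V$ is viewed as a $KG_Y$-module by inflation along $\rho_X(g)=e_Xg$, exactly as in the statement. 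This yields $\Ext^r_{KM}(V,W)\cong\Ext^{r-\rk[X,Y]}_{KG_Y}(V,W)$ over an arbitrary field. For the final assertion, $\Sigma_n^G$ is a CW left regular band of groups with maximal subgroups $G_X\cong G^{|X|}$; when $\mathrm{char}\,K\nmid|G|$ the algebra $KG_Y$ is semisimple by Maschke, so $\Ext^i_{KG_Y}$ is $0$ for $i>0$ and $\Hom_{KG_Y}$ for $i=0$, forcing $\Ext^r_{K\Sigma_n^G}(V,W)$ to be nonzero only at $r=\rk[X,Y]$, where it equals $\Hom_{KG_Y}(V,W)$.

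The main thing to be careful about is the coefficient module: one must verify both that the $G_Y$-action on $\til H_*(\Delta(\partial e_YB_{\geq X}))$ is literally trivial (so that $E_2=\Ext^p_{KG_Y}(K,-)$, with no sphere-twist), and that the $G_Y$-module structure on $V$ produced by passing to the contraction $M_{\geq X}$ in Theorem~\ref{t:left.duo} is precisely inflation along $\rho_X$. Both follow from Proposition~\ref{p:lrbg.pi} and the analysis of $\rho_X$ preceding Proposition~\ref{p:look.up}, and these are exactly the points where the ``of groups'' refinement does real work beyond the plain CW left regular band case.
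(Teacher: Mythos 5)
Your proposal is correct and follows essentially the same route as the paper's proof: reduce to Theorem~\ref{t:left.duo}, use the CW poset axiomatization to recognize $\Delta(\partial e_YB_{\geq X})$ as a $(\rk[X,Y]-1)$-sphere with trivial $G_Y$-action (via Proposition~\ref{p:lrbg.pi}), collapse the spectral sequence onto the row $q=\rk[X,Y]$, and conclude with semisimplicity of $KG_Y$ in good characteristic. The only cosmetic difference is that you prove the identification $\Ext^i_{KG_Y}(K,V^\ast\otimes_K W)\cong\Ext^i_{KG_Y}(V,W)$ by hand with a tensored projective resolution, where the paper cites \cite[Corollary~4.5]{affinetoprep} for the same fact.
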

\begin{proof}
Necessity of $X\leq Y$ and the case that $X=Y$ follows from Theorem~\ref{t:left.duo}.  Assume now that $X<Y$.  Then $\Delta(\partial e_YB_{\geq X})$ is a triangulation of a $(\rk[X,Y]-1)$-sphere by the axiomatization of a CW poset. Also $G_Y$ acts trivially on the corresponding simplicial complex by Proposition~\ref{p:lrbg.pi}.  Thus $\til H_{q-1}(\Delta(\partial e_YB_{\geq X})),K)=0$ unless $q-1=\rk[X,Y]-1$, that is $q=\rk[X,Y]$, in which case $\til H_{q-1}(\Delta(\partial e_YB_{\geq X}),K)\cong K$ is the trivial $KG_Y$-module.  Thus, the spectral sequence in Theorem~\ref{t:left.duo}(3) collapses on the $E_2$-page on the line $q=\rk[X,Y]$ to yield
\begin{align*}
\Ext_{KM}^{r}(V,W)&\cong \Ext^{r-\rk[X,Y]}_{KG_Y}(K, V^*\otimes_K W)= H^{r-\rk[X,Y]}(G_Y,V^\ast\otimes_K W)  \\ &\cong H^{r-\rk[X,Y]}(G_Y,\Hom_K(V,W))\cong \Ext^{r-\rk[X,Y]}_{KG_Y}(V,W)
\end{align*}
by~\cite[Corollary~4.5]{affinetoprep} applied to $G_Y$.  In particular, if the characteristic of $K$ does not divide $|G|$, these spaces vanish except when $r=\rk[X,Y]$, in which case $\Ext^r_{KM}(V,W)\cong \Hom_{KG_Y}(V,W)$. Alternatively, this last conclusion follows from Theorem~\ref{t:lrbg.ext} upon noting that since $\Delta(\partial e_YB_{\geq X})$ is a triangulation of a $(\rk[X,Y]-1)$-sphere, its only nonvanishing reduced Betti number is $\til\beta_{r-1}=1$.
\end{proof}

We can now describe exactly the quiver of a CW left regular band of abelian groups over an algebraically closed field whose characteristic does not divide the order of any subgroup.

\begin{Cor}\label{c:cwlrbg.quiver}
Let $M$ be a CW left regular band of abelian groups with CW left regular band of idempotents $B$ and $K$ an algebraically closed field whose characteristic divides the order of no maximal subgroup of $M$.  Then the quiver of $M$ is the Hasse diagram (with edges oriented upward) of the poset $P(M)$ consisting of pairs $(X,\chi)$ with $X\in \Lambda(B)$ and $\chi\in \wh G_X$ with the ordering $(X,\chi)\leq (Y,\theta)$ if $X\leq Y$ and $\theta=\chi\circ\rho_X$.
\end{Cor}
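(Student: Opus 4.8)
The plan is to read the quiver off directly from the general recipe recalled in the Quivers subsection, feeding it the $\Ext^1$ computation of Theorem~\ref{t:first.stab}. Since $M$ is a CW left regular band of abelian groups and $K$ is algebraically closed of good characteristic, $KM$ is split basic, so each simple module is one-dimensional and the vertices of the quiver are exactly the isomorphism classes of simples; these are indexed by the pairs $(X,\chi)\in P(M)$ with $\chi\in\wh G_X$ inflated to $KM$ along $\p_{e_X}$. The number of arrows from the simple $V$ of apex $X$ and character $\chi$ to the simple $W$ of apex $Y$ and character $\theta$ is $\dim_K\Ext^1_{KM}(V,W)$, so everything reduces to computing these first $\Ext$ spaces.

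First I would invoke Theorem~\ref{t:first.stab}: in good characteristic $\Ext^1_{KM}(V,W)$ vanishes unless $X\le Y$ and $\rk[X,Y]=1$, in which case $\Ext^1_{KM}(V,W)\cong\Hom_{KG_Y}(V,W)$, where $V$ is regarded as a $KG_Y$-module by inflation along $\rho_X$. Thus as a $KG_Y$-module $V$ is the character $\chi\circ\rho_X$, while $W$ is $\theta$; since $G_Y$ is abelian and $K$ is algebraically closed, Schur's lemma gives $\dim_K\Hom_{KG_Y}(V,W)=1$ if $\chi\circ\rho_X=\theta$ and $0$ otherwise. Hence there is a single arrow $(X,\chi)\to(Y,\theta)$ precisely when $X\le Y$, $\rk[X,Y]=1$, and $\theta=\chi\circ\rho_X$, and no arrow otherwise.

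It remains to recognize these data as the Hasse diagram of $P(M)$. Because $B$ is a CW left regular band, $\Lambda(B)$ is graded, so $\rk[X,Y]=1$ is equivalent to $Y$ covering $X$ in $\Lambda(B)$. By Proposition~\ref{p:look.up} the order isomorphism $P(M)_{\ge(X,\chi)}\cong\Lambda(B)_{\ge X}$ sends $(Y,\chi\circ\rho_X)$ to $Y$ and therefore preserves covering relations; consequently $(X,\chi)\lessdot(Y,\theta)$ in $P(M)$ if and only if $\theta=\chi\circ\rho_X$ and $X\lessdot Y$ in $\Lambda(B)$, which is exactly the condition found above. Therefore the arrows of the quiver, each of multiplicity one and oriented from the smaller to the larger element, are precisely the covering relations of $P(M)$, which is the assertion.

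The individual steps are routine given Theorem~\ref{t:first.stab}, so there is no substantial obstacle; the only point requiring care is the translation between the numerical conditions ``$\rk[X,Y]=1$ and $\theta=\chi\circ\rho_X$'' and the combinatorial covering relation in $P(M)$, which is exactly where gradedness of $\Lambda(B)$ and Proposition~\ref{p:look.up} enter. As a sanity check one can rederive the edge count directly from Corollary~\ref{c:quiver.lrbg}: for a CW left regular band $\partial e_YB_{\geq X}$ triangulates a $(\rk[X,Y]-1)$-sphere, which is connected exactly when $\rk[X,Y]\ge 2$ and is a two-point $0$-sphere when $\rk[X,Y]=1$, so ``one fewer edge than connected components'' yields a single arrow precisely on the covering relations, in agreement with the computation above.
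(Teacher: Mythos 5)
Your proposal is correct and follows essentially the same route as the paper: both deduce the result from Theorem~\ref{t:first.stab} (the collapse of $\Ext^1$ to the rank-one case in good characteristic), combined with Schur's lemma for the abelian groups $G_Y$ and the identification of covering relations in $P(M)$ via Proposition~\ref{p:look.up} and the gradedness of $\Lambda(B)$. The paper's proof is just a terser version of the same argument; your extra sanity check via Corollary~\ref{c:quiver.lrbg} is consistent but not needed.
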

\begin{proof}
The poset $\Lambda(B)$ is graded by~\cite[Theorem~6.2]{ourmemoirs} and hence $P(M)$ is graded with $\rk[(X,\chi),(Y,\theta)]=\rk[X,Y]$ by Proposition~\ref{p:look.up}, its proof and the discussion thereafter.
It follows from Theorem~\ref{t:first.stab} that $\Ext^1_{KM}((X,\chi),(Y,\theta))$ is one-di\-men\-sio\-nal if $\rk[(X,\chi),(Y,\theta)]=1$ and is zero otherwise.  The corollary follows from this.
\end{proof}

For a CW left regular band, these results specialize to the results of~\cite{ourmemoirs}.
Let us rephrase our results for the special case of $\Sigma_n^G$.

\begin{Cor}\label{c:first.stab}
Let $G$ be a finite group and $K$ a field.  If $V,W$ are simple $K\Sigma_n^G$-modules with apexes $X,Y\in \Pi_n$ respectively, then $\Ext^r_{K\Sigma_n^G}(V,W)=0$ unless $X\leq Y$ and $r\geq |X|-|Y|$, in which case
\[\Ext^r_{K\Sigma_n^G}(V,W) \cong \Ext^{r-|X|+|Y|}_{KG_Y}(V,W)\] where we note that $W$ is a simple $KG_Y$-module, $V$ is a simple $KG_X$-module and $G_Y\leq G_X$.  In particular, if the characteristic of $K$ does not divide $|G|$, then $\Ext^r_{K\Sigma_n^G}(V,W)=0$ unless $X\leq Y$ and $r= |X|-|Y|$, in which case $\Ext^r_{K\Sigma_n^G}(V,W)\cong \Hom_{KG_Y}(V,W)$.
\end{Cor}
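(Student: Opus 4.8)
The plan is to read this off as the concrete instance of Theorem~\ref{t:first.stab} applied to $M=\Sigma_n^G$, so that the only real work is to match the abstract data against the combinatorics of set partitions. First I would recall from the preceding discussion that $\Sigma_n^G$ is a CW left regular band of groups: it is a submonoid of the direct product of the CW left regular band $\Sigma_n$ with the finite group $G^n$, hence a left regular band of groups, and its idempotents coincide with those of $\Sigma_n$, which form a CW left regular band. Thus Theorem~\ref{t:first.stab} applies verbatim, and in fact its own ``in particular'' clause is already phrased for $\Sigma_n^G$, so the corollary is essentially a translation of it.

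Next I would perform the two identifications needed to pass from the abstract statement to the partition language. On one hand, $\Lambda(B)\cong \Pi_n$ with coarser partitions larger, so the apexes $X,Y$ are exactly set partitions and the hypothesis $X\leq Y$ is the partition order; by the Clifford--Munn--Ponizovskii classification, simple modules with apex $X$ are precisely those inflated from simple $KG_X$-modules, which accounts for viewing $V,W$ as $KG_X$-, $KG_Y$-modules. On the other hand, the maximal subgroup $G_{e_X}$ is $G_X\cong G^{|X|}$, and for $X\leq Y$ one has $G_Y\leq G_X$ inside $G^n$. I would check that the map $\rho_X(g)=e_Xg$ of Theorem~\ref{t:first.stab} restricts on $G_Y$ to the inclusion $G_Y\hookrightarrow G_X$: multiplication by $e_X$ leaves the $G^n$-coordinate unchanged, and an element constant on the blocks of $Y$ is a fortiori constant on the finer blocks of $X$. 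Hence inflation along $\rho_X$ is simply restriction of the $KG_X$-module $V$ to $G_Y$.

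The one genuine computation is the identification of the rank. The material just before the statement records that for $X<Y$ and $\pi$ with $\sigma(\pi)=Y$ the complex $\Delta(\partial e_YB_{\geq X})$ is a triangulation of an $(|X|-|Y|-1)$-sphere; comparing this with the $(\rk[X,Y]-1)$-sphere appearing in the proof of Theorem~\ref{t:first.stab}, I conclude $\rk[X,Y]=|X|-|Y|$. Substituting this together with the identifications above into Theorem~\ref{t:first.stab} yields the vanishing for $r<|X|-|Y|$ and the isomorphism $\Ext^r_{K\Sigma_n^G}(V,W)\cong \Ext^{r-|X|+|Y|}_{KG_Y}(V,W)$. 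For the final assertion I would note that if the characteristic of $K$ does not divide $|G|$ then it does not divide $|G_Y|=|G|^{|Y|}$, so $KG_Y$ is semisimple by Maschke's theorem; therefore the right-hand $\Ext$ vanishes except in degree $0$, forcing $r=|X|-|Y|$, where it equals $\Hom_{KG_Y}(V,W)$.

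I do not expect any real obstacle: all the homological content is already contained in Theorem~\ref{t:first.stab}, and the sole nontrivial input is the rank identity $\rk[X,Y]=|X|-|Y|$, which follows immediately from the known sphericity of $\Delta(\partial e_YB_{\geq X})$. The remainder is the bookkeeping of the dictionaries $\Lambda(B)=\Pi_n$ and $G_{e_X}=G_X$, and the observation that $\rho_X|_{G_Y}$ is the subgroup inclusion.
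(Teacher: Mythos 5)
Your proposal is correct and takes essentially the same route as the paper, whose proof is exactly this: cite Theorem~\ref{t:first.stab} and observe that $\rk[X,Y]=|X|-|Y|$ in $\Pi_n$. Your additional bookkeeping (checking that $\rho_X|_{G_Y}$ is the subgroup inclusion $G_Y\leq G_X$, and deriving the rank identity by comparing sphere dimensions rather than by the standard fact that covers in $\Pi_n$ merge two blocks) simply fills in details the paper leaves implicit.
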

\begin{proof}
This is immediate from Theorem~\ref{t:first.stab} once we observe that $\rk[X,Y] = |X|-|Y|$.
\end{proof}

This, of course, specializes in the case that $G$ is trivial to the computation for $\Sigma_n$ in~\cite{Saliolahyperplane,MSS,ourmemoirs}

To give a more precise description of $\Ext$ between the simple $K\Sigma_n^G$-modules, it will be convenient to impose the condition that $K$ is algebraically closed so that each simple $KG^r$-module is an outer tensor product of simple $KG$-modules.  We shall need the following well-known fact;  a proof for the special case of endomorphisms can be found in~\cite[Corollary~17.7.7]{KapilovskyB}, but essentially the same proof works in the general case.  Recall that if $G, H$ are groups, then $K[G\times H]\cong KG\otimes_K KH$ as $K$-algebras.

\begin{Lemma}\label{l:direct.prod}
Let $G$ and $H$ be finite groups and $K$ a field.  If $V,W$ are finite dimensional $KG$-modules and $V',W'$ are finite dimensional $KH$-modules, then $\Hom_{K[G\times H]}(V\otimes_K V',W\otimes_K W')\cong \Hom_{KG}(V,W)\otimes_K \Hom_{KH}(V',W')$.
\end{Lemma}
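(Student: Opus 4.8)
The plan is to realize both sides as $G\times H$-invariants of one and the same space, which has the advantage of avoiding any appeal to semisimplicity, since the lemma places no hypothesis on the characteristic of $K$ relative to $|G|$ or $|H|$. Throughout I identify $K[G\times H]\cong KG\otimes_K KH$ and view $V\otimes_K V'$, $W\otimes_K W'$ as $K[G\times H]$-modules via $(g,h)(v\otimes v')=gv\otimes hv'$. I would begin from the general fact that for any $K[G\times H]$-modules $A,B$ one has $\Hom_{K[G\times H]}(A,B)=\Hom_K(A,B)^{G\times H}$, where $G\times H$ acts on $\Hom_K(A,B)$ by conjugation $((g,h)\cdot F)(a)=(g,h)F((g,h)\inv a)$; a $K$-linear map is $G\times H$-equivariant precisely when it is fixed by this action. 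I apply this with $A=V\otimes_K V'$ and $B=W\otimes_K W'$.

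Next I would invoke the standard isomorphism of finite-dimensional vector spaces $\Hom_K(V\otimes_K V',W\otimes_K W')\cong \Hom_K(V,W)\otimes_K\Hom_K(V',W')$, sending $f\otimes f'$ to the map $v\otimes v'\mapsto f(v)\otimes f'(v')$ (here finite-dimensionality is what guarantees this is an isomorphism and not merely an injection). The key point to verify is that this is an isomorphism of $K[G\times H]$-modules when the left side carries the conjugation action coming from the $G\times H$-module structures on $V\otimes_K V'$ and $W\otimes_K W'$, and the right side carries the outer tensor of the conjugation $G$-action on $\Hom_K(V,W)$ with the conjugation $H$-action on $\Hom_K(V',W')$. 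A direct computation gives $(g,h)\cdot(f\otimes f')=(g\cdot f)\otimes(h\cdot f')$, which is exactly the tensor action; this is the one equivariance check that should be carried out carefully.

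Finally I would compute the invariants. Write $P=\Hom_K(V,W)$, a $KG$-module on which $H$ acts trivially, and $Q=\Hom_K(V',W')$, a $KH$-module on which $G$ acts trivially. Since the two actions commute, $(P\otimes_K Q)^{G\times H}=\bigl((P\otimes_K Q)^G\bigr)^H$, and I would prove the elementary identity $(P\otimes_K Q)^G=P^G\otimes_K Q$: fixing a $K$-basis $\{q_i\}$ of $Q$ (on which $G$ acts trivially) and writing an element uniquely as $\sum_i p_i\otimes q_i$, $G$-invariance forces each $p_i\in P^G$. Applying this twice yields $(P\otimes_K Q)^{G\times H}=P^G\otimes_K Q^H$. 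Combining the three steps, and recalling $\Hom_{KG}(V,W)=\Hom_K(V,W)^G$ together with its analogue for $H$, gives the desired isomorphism $\Hom_{K[G\times H]}(V\otimes_K V',W\otimes_K W')\cong \Hom_{KG}(V,W)\otimes_K\Hom_{KH}(V',W')$. The only thing requiring genuine care, rather than any deep obstacle, is the equivariance in the middle step; everything else is routine, and crucially the argument uses no semisimplicity, so it holds in arbitrary characteristic.
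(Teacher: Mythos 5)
Your proof is correct, and every step checks out: the identification $\Hom_{K[G\times H]}(A,B)=\Hom_K(A,B)^{G\times H}$ under the conjugation action, the equivariance of the canonical isomorphism $\Hom_K(V,W)\otimes_K\Hom_K(V',W')\to\Hom_K(V\otimes_K V',W\otimes_K W')$ (which is indeed the one point deserving an explicit computation, and your computation $(g,h)\cdot(f\otimes f')=(g\cdot f)\otimes(h\cdot f')$ is right), and the elementary basis argument giving $(P\otimes_K Q)^{G}=P^G\otimes_K Q$ and hence $(P\otimes_K Q)^{G\times H}=P^G\otimes_K Q^H$. Note that the paper does not actually write out a proof of this lemma: it cites Karpilovsky for the special case $V=W$, $V'=W'$ of endomorphism algebras and asserts that "essentially the same proof works in the general case." Your argument is the natural way to make that assertion concrete, and it is almost certainly the same invariants computation underlying the cited proof; what your write-up buys beyond the paper's citation is self-containedness, the explicit observation that no semisimplicity (hence no restriction on the characteristic of $K$) enters anywhere, and in fact the observation that finiteness of $G$ and $H$ is never used either --- only finite-dimensionality of the four modules, which is what makes the tensor-hom map an isomorphism rather than merely an injection. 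One cosmetic remark: the equality $(P\otimes_K Q)^{G\times H}=\bigl((P\otimes_K Q)^G\bigr)^H$ needs only that $G\times 1$ and $1\times H$ generate $G\times H$ (together with the fact that the $H$-action preserves $P^G$, which holds since $H$ acts trivially on $P$); the phrase "since the two actions commute" is the reason $H$ acts on the $G$-invariants, not the reason for the equality itself.
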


 Recall that $K$ is a splitting field for a group $G$ if each simple $KG$-module is absolutely simple (i.e., remains simple after extending scalars to the algebraic closure). The following proposition is well known, cf.~\cite[Corollary~17.8.13]{KapilovskyB}.

\begin{Prop}\label{p:split.field}
Suppose that $G$ and $H$ are finite groups and $K$ is a splitting field for both $G$ and $H$.  Then $K$ is a splitting field for $G\times H$ and if $V_1,\ldots, V_n$ and $W_1,\ldots, W_m$ are complete sets of representatives of the isomorphism classes of simple $KG$-modules, respectively $KH$-modules, then $\{V_i\otimes_K W_j\mid 1\leq i\leq n, 1\leq j\leq m\}$ is a complete set of representatives of the isomorphism classes of simple $K[G\times H]$-modules.
\end{Prop}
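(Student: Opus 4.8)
The plan is to combine the double-centralizer (Burnside) theorem with a computation of the radical of a tensor product of algebras, working throughout under the identification $K[G\times H]\cong KG\otimes_K KH$, under which $V_i\otimes_K W_j$ carries the action $(g,h)\cdot(v\otimes w)=gv\otimes hw$. First I would show that each $V_i\otimes_K W_j$ is absolutely simple. Since $K$ is a splitting field for $G$, each $V_i$ is absolutely simple, so $\End_{KG}(V_i)\cong K$ and Burnside's theorem gives that the structure map $KG\to \End_K(V_i)$ is surjective; likewise $KH\to \End_K(W_j)$ is surjective. Tensoring these two surjections of $K$-algebras and using $\End_K(V_i)\otimes_K \End_K(W_j)\cong \End_K(V_i\otimes_K W_j)$ (valid in finite dimensions), the resulting map $K[G\times H]\cong KG\otimes_K KH\to \End_K(V_i\otimes_K W_j)$ is surjective. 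By the converse direction of Burnside's theorem (a finite-dimensional module on which the algebra surjects onto the full endomorphism algebra is simple with endomorphism ring $K$), $V_i\otimes_K W_j$ is absolutely simple.

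Next, pairwise non-isomorphism is immediate from Lemma~\ref{l:direct.prod}: if $(i,j)\neq (i',j')$ then either $V_i\not\cong V_{i'}$ or $W_j\not\cong W_{j'}$, so by Schur's lemma one of $\Hom_{KG}(V_i,V_{i'})$, $\Hom_{KH}(W_j,W_{j'})$ vanishes, whence $\Hom_{K[G\times H]}(V_i\otimes_K W_j, V_{i'}\otimes_K W_{j'})\cong \Hom_{KG}(V_i,V_{i'})\otimes_K \Hom_{KH}(W_j,W_{j'})=0$.

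It remains to show that these $nm$ modules exhaust all simple $K[G\times H]$-modules and that $K$ is a splitting field for $G\times H$. Because $K$ is a splitting field, $KG/\rad(KG)\cong \prod_{i=1}^n M_{d_i}(K)$ with $d_i=\dim_K V_i$, and similarly $KH/\rad(KH)\cong \prod_{j=1}^m M_{e_j}(K)$; in particular the semisimple quotient $KG/\rad(KG)$ is a separable $K$-algebra. For a tensor product of finite-dimensional algebras in which one factor has separable semisimple quotient, one has $\rad(KG\otimes_K KH)=\rad(KG)\otimes_K KH + KG\otimes_K \rad(KH)$, so that
\[
K[G\times H]/\rad(K[G\times H])\cong \bigl(KG/\rad(KG)\bigr)\otimes_K \bigl(KH/\rad(KH)\bigr)\cong \prod_{i,j}\bigl(M_{d_i}(K)\otimes_K M_{e_j}(K)\bigr)\cong \prod_{i,j} M_{d_ie_j}(K).
\]
This is a product of $nm$ matrix algebras over $K$, so $K$ is a splitting field for $G\times H$ and $K[G\times H]$ has exactly $nm$ isomorphism classes of simple modules. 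Having already produced $nm$ pairwise non-isomorphic absolutely simple modules $V_i\otimes_K W_j$, we conclude that they form a complete set of representatives.

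The step I expect to be the main obstacle is this last completeness and counting argument in the possibly non-semisimple setting, namely justifying the displayed formula for $\rad(KG\otimes_K KH)$ via separability of $KG/\rad(KG)$. In the semisimple case relevant to $\Sigma_n^G$ in good characteristic this collapses to the standard fact that the tensor product over $K$ of a split semisimple algebra with any semisimple algebra is again semisimple, and the matching of Wedderburn components is routine; the general statement is exactly what the cited reference records.
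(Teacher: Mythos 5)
Your proof is correct. Note, however, that the paper itself gives no argument for this proposition: it is stated as a well-known fact with a citation to Karpilovsky [Corollary~17.8.13], so there is no internal proof to compare against. Your argument is essentially the standard textbook one and is complete: Burnside/density gives surjectivity of $KG\to\End_K(V_i)$ and $KH\to\End_K(W_j)$ from absolute simplicity, tensoring surjections and using $\End_K(V_i)\otimes_K\End_K(W_j)\cong\End_K(V_i\otimes_K W_j)$ yields absolute simplicity of $V_i\otimes_K W_j$; pairwise non-isomorphism follows from Lemma~\ref{l:direct.prod} and Schur, which is legitimate since that lemma precedes the proposition in the paper; and the counting step is sound because $KG/\rad(KG)$ is split semisimple, hence separable, so $\rad(KG\otimes_K KH)=\rad(KG)\otimes_K KH+KG\otimes_K\rad(KH)$ and the semisimple quotient is $\prod_{i,j}M_{d_ie_j}(K)$. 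In fact, in the split case you do not even need general separability theory: $M_d(K)\otimes_K S\cong M_d(S)$ has radical $M_d(\rad S)$, which gives the displayed isomorphism directly. The one point worth making explicit is the equivalence you use implicitly at the start, namely that for a finite-dimensional simple module, absolute simplicity, $\End=K$, and surjectivity of the structure map onto $\End_K(V)$ are all equivalent; this is standard but is the hinge on which both the first and last steps of your argument turn.
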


Let us introduce some notation to set up our next theorem.  Let $G$ be a finite group  and  $K$ a splitting field for $G$.  Fix once and for all a set $A$ of representatives of the isomorphism classes of simple $KG$-modules.

If $X\in \Pi_n$ and $g\in G_X$, then for $P\in X$ we set $g_P$  to be the common value $g_p$ with $p\in P$ so that $G_X\cong G^{|X|}$ via $g\mapsto (g_P)_{P\in X}$.
The simple $K\Sigma_n^G$-modules (up to isomorphism) are indexed by pairs $(X,f)$ where $X\in \Pi_n$ and $f\colon X\to A$, that is, $f$ assigns a simple module to each partition block.  The corresponding simple $KG_X$-module is given by $\bigotimes_{P\in X} f(P)$ (which is a simple $KG^{|X|}$-module).  Concretely, if $P=\{P_1,\ldots, P_r\}$, then \[g(v_1\otimes\cdots\otimes v_r) = g_{P_1}v_1\otimes\cdots \otimes g_{P_r}v_r\] for $g\in G_X$ and $v_i\in f(P_i)$.  Of course, this inflates to a simple $K\Sigma_n^G$-module in the usual way.  For convenience, we shall adopt the notation $(X,f)$ for the corresponding simple module.

\begin{Thm}\label{t:Hsiao}
Let $G$ be a finite group and $K$ a splitting field for $G$ such that the characteristic of $K$ does not divide $|G|$. Retaining the previous notation, we have that if $(X,f)$ and $(Y,h)$ are simple $K\Sigma_n^G$-modules, then $\Ext^q_{K\Sigma_n^G}((X,f),(Y,h))=0$ unless $X\leq Y$ and $q = |X|-|Y|$, in which case
\[\Ext^q_{K\Sigma_n^G}((X,f),(Y,h)) \cong \bigotimes_{P\in Y} \Hom_{KG}\left(\bigotimes_{Q\in X,Q\subseteq P}f(Q),h(P)\right)\]
holds.
\end{Thm}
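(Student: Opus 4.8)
The plan is to deduce everything from Corollary~\ref{c:first.stab} together with Lemma~\ref{l:direct.prod}, so that the only real content is an explicit $\Hom$-computation over $G_Y$. Corollary~\ref{c:first.stab} gives the vanishing of $\Ext^q_{K\Sigma_n^G}((X,f),(Y,h))$ unless $X\leq Y$ and $q=|X|-|Y|$, and in that surviving degree it yields $\Ext^q_{K\Sigma_n^G}((X,f),(Y,h))\cong \Hom_{KG_Y}(V,W)$, where $V=(X,f)=\bigotimes_{Q\in X}f(Q)$ is the simple $KG_X$-module regarded as a $KG_Y$-module and $W=(Y,h)=\bigotimes_{P\in Y}h(P)$. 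Thus the whole theorem reduces to evaluating this $\Hom$ space over $G_Y\cong G^{|Y|}$.

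First I would pin down the $KG_Y$-module structure carried by $V$. Since $X\leq Y$ we have $G_Y\leq G_X$ as subgroups of $G^n$, and I would check that $\rho_X$ restricted to $G_Y$ is exactly this inclusion: for $g\in G_Y$ the element $\rho_X(g)=e_Xg$ has second coordinate $g$ itself under the identification $G_{e_X}\cong G_X$ by projection, so $V$ as a $KG_Y$-module is simply $\Res^{G_X}_{G_Y}V$. The key step is then to recognize this restriction as an outer tensor product indexed by the blocks of $Y$. Under the coordinate identifications $G_X\cong\prod_{Q\in X}G$ and $G_Y\cong\prod_{P\in Y}G$, the inclusion $G_Y\hookrightarrow G_X$ sends $(g_P)_{P\in Y}$ to the tuple whose $Q$-coordinate is $g_P$ for the unique $P\in Y$ with $Q\subseteq P$. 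Hence $(g_P)_{P\in Y}$ acts on $\bigotimes_{Q\in X}f(Q)$ by $g_P$ on each factor $f(Q)$ with $Q\subseteq P$, and grouping the tensor factors by the block of $Y$ containing them gives $\Res^{G_X}_{G_Y}V\cong\bigotimes_{P\in Y}\bigl(\bigotimes_{Q\in X,\,Q\subseteq P}f(Q)\bigr)$, an outer tensor product of the $KG$-modules $V_P=\bigotimes_{Q\in X,\,Q\subseteq P}f(Q)$, each equipped with the diagonal $G$-action. Likewise $W=\bigotimes_{P\in Y}h(P)$ is the outer tensor product of the $h(P)$.

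Finally, I would iterate Lemma~\ref{l:direct.prod} over the $|Y|$ factors of $G_Y\cong G^{|Y|}$, peeling off one copy of $G$ at a time by induction on $|Y|$, to obtain
\[\Hom_{KG_Y}(V,W)\cong\bigotimes_{P\in Y}\Hom_{KG}(V_P,h(P))=\bigotimes_{P\in Y}\Hom_{KG}\Bigl(\bigotimes_{Q\in X,\,Q\subseteq P}f(Q),h(P)\Bigr),\]
which is the claimed formula. The main obstacle is the bookkeeping in the key step: correctly reading off the embedding $G_Y\hookrightarrow G_X$ and seeing that it forces the copy of $G$ attached to each $Y$-block $P$ to act diagonally on precisely the tensor factors coming from the $X$-blocks contained in $P$. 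Once this outer-tensor decomposition is in place, Lemma~\ref{l:direct.prod} applies verbatim—note that the factors $V_P$ need not be simple, but the lemma is stated for arbitrary finite-dimensional modules—and the computation concludes mechanically.
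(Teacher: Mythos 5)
Your proposal is correct and follows essentially the same route as the paper's proof: reduce via Corollary~\ref{c:first.stab} to computing $\Hom_{KG_Y}((X,f),(Y,h))$, identify the restriction of $(X,f)$ to $G_Y\leq G_X$ as the outer tensor product $\bigotimes_{P\in Y}\bigl(\bigotimes_{Q\in X,\,Q\subseteq P}f(Q)\bigr)$ by grouping tensor factors over the blocks of $Y$, and then apply Lemma~\ref{l:direct.prod}. Your added remarks (that $\rho_X|_{G_Y}$ is the subgroup inclusion, that the lemma must be iterated over the $|Y|$ factors, and that it applies to the non-simple modules $V_P$) are details the paper leaves implicit, but they do not change the argument.
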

\begin{proof}
Corollary~\ref{c:first.stab} reduces the theorem to checking that
\[\Hom_{KG_Y}((X,f),(Y,h))\cong \bigotimes_{P\in Y} \Hom_{KG}\left(\bigotimes_{Q\in X,Q\subseteq P}f(Q),h(P)\right).\]  We claim that the restriction of $(X,f)$ to $G_Y\leq G_X$ is isomorphic to $\bigotimes_{P\in Y}\bigotimes_{Q\in X, Q\subseteq P}f(Q)$, where the inner tensor product $\bigotimes_{Q\in X, Q\subseteq P}f(Q)$ is a $KG$-module.  Assuming this claim, the theorem follows from Lemma~\ref{l:direct.prod}.   Indeed, if $g\in G_Y$ and $Q\in X$, then $g_Q = g_P$ for the unique $P\in Y$ with $Q\subseteq P$.  So if $Q=\{Q_1,\ldots, Q_r\}$ and if $Q_i\subseteq P_{s(i)}\in Y$, then we have, for $g\in G_Y$, that \[g(v_1\otimes\cdots\otimes v_r) = g_{P_{s(1)}}v_1\otimes \cdots \otimes g_{P_{s(r)}}v_r\]  for $v_i\in f(Q_i)$.  Grouping together the tensor factors over the fibers of $s$, we see that $(X,f)\cong  \bigotimes_{P\in Y}\bigotimes_{Q\in X, Q\subseteq P}f(Q)$ as a representation of $KG_Y$.  This completes the proof.
\end{proof}

The special case where $q=1$ is the content of~\cite[Theorem~6.3]{rrbg}, which was proved using different methods.

Theorem~\ref{t:Hsiao} greatly simplifies in the case that $G$ is abelian.  In this case, each simple $KG$-module is one-dimensional, and so the tensor product of simple $KG$-modules is again simple.  Let us identify the set of isomorphism classes of simple $KG$-modules with the dual group $\wh G=\Hom(G,K^\times)$ of (multiplicative) characters of $G$.  The tensor product of modules then corresponds to the pointwise product on $\wh G$. Simple $K\Sigma_n^G$-modules now correspond to pairs $(X,f)$ where $X\in \Pi_n$ and $f\colon X\to \wh G$.

\begin{Cor}\label{c:abelian}
Let $G$ be a finite abelian group and $K$ a splitting field for $G$ such that the characteristic of $K$ does not divide $|G|$. Retaining the previous notation, we have that if $(X,f)$ and $(Y,h)$ are simple $K\Sigma_n^G$-modules, then $\Ext^q_{K\Sigma_n^G}((X,f),(Y,h))=0$ unless $X\leq Y$, $q = |X|-|Y|$ and $h(P) = \prod_{Q\in X, Q\subseteq P}f(Q)$ in $\wh G$ for each $P\in Y$, in which case $\Ext^q_{K\Sigma_n^G}((X,f),(Y,h))\cong K$.
\end{Cor}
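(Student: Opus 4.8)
The plan is to specialize Theorem~\ref{t:Hsiao} to the abelian case, where the only simplification needed is the description of $\Hom_{KG}$ between one-dimensional modules. First I would recall that since $G$ is abelian and $K$ is a splitting field whose characteristic does not divide $|G|$, every simple $KG$-module is one-dimensional and may be identified with a character in $\wh G$, with tensor product of modules corresponding to the pointwise product of characters. Under this identification, $f\colon X\to A$ becomes $f\colon X\to \wh G$, and the inner tensor product $\bigotimes_{Q\in X,\,Q\subseteq P}f(Q)$ appearing in Theorem~\ref{t:Hsiao} is simply the character $\prod_{Q\in X,\,Q\subseteq P}f(Q)\in \wh G$.

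Next I would invoke the elementary fact from representation theory of groups that for one-dimensional modules $U,U'$ over a group, $\Hom_{KG}(U,U')$ is one-dimensional if $U\cong U'$ (equivalently, the corresponding characters coincide) and is zero otherwise; this is just Schur's Lemma together with the observation that a $KG$-module homomorphism between one-dimensional modules is determined by a scalar that must intertwine the actions. Applying this factor-by-factor to the expression
\[
\Ext^q_{K\Sigma_n^G}((X,f),(Y,h)) \cong \bigotimes_{P\in Y} \Hom_{KG}\left(\prod_{Q\in X,\,Q\subseteq P}f(Q),\,h(P)\right),
\]
I would conclude that each tensor factor indexed by $P\in Y$ is one-dimensional precisely when $h(P)=\prod_{Q\in X,\,Q\subseteq P}f(Q)$ and is zero otherwise. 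Hence the whole tensor product vanishes unless the equality holds for every $P\in Y$ simultaneously, and when it does hold, the product of one-dimensional spaces is one-dimensional, giving $\Ext^q\cong K$.

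Finally I would combine this with the vanishing and degree conditions already supplied by Theorem~\ref{t:Hsiao}: the Ext group vanishes unless $X\le Y$ and $q=|X|-|Y|$, and within that range the character condition selects exactly when the nonzero value $K$ occurs. There is no substantive obstacle here — the result is a direct corollary. The only mild point to state carefully is the identification of the inner tensor product with the pointwise product of characters and the triviality of $\Hom$ between non-isomorphic one-dimensional modules; both are routine, so the proof is essentially a translation of Theorem~\ref{t:Hsiao} into the language of $\wh G$.
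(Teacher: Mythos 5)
Your proof is correct and follows essentially the same route as the paper: specialize Theorem~\ref{t:Hsiao}, identify the inner tensor product $\bigotimes_{Q\in X,\,Q\subseteq P}f(Q)$ with the one-dimensional module of character $\prod_{Q\in X,\,Q\subseteq P}f(Q)$, and apply Schur's lemma to each $\Hom_{KG}$ factor. The paper's proof is just a more compressed version of exactly this argument.
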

\begin{proof}
Note that $\bigotimes_{Q\in X, Q\subseteq P}f(Q)$ is the one-dimensional simple $KG$-module with character $\prod_{Q\in X, Q\subseteq P}f(Q)$.  The result now follows from Theorem~\ref{t:Hsiao} and Schur's lemma.
\end{proof}

Let $K$ be an algebraically closed field whose characteristic does not divide $|G|$ where $G$ is a finite abelian group.  Note that the poset $P(\Sigma_n^G)$ can be identified with pairs $(X,f)$ with $X\in \Pi_n$ and $f\colon X\to \wh G$.  One has $(X,f)\leq (Y,h)$ if and only if the partition $X$ refines $Y$ and $h(B) = \prod_{B'\in X, B'\subseteq B}f(B')$ for $B\in Y$.  The Hasse diagram of this partial order (with edges oriented upward) is the quiver of $K\Sigma_n^G$, which agrees with the description in~\cite{rrbg}.

\subsection{A quiver presentation and Koszul duality for the monoid of ordered $G$-partitions}
We begin this section by making explicit a result from~\cite{ourmemoirs}, which unfortunately is not proved in fully generality there.

If $P$ is a finite poset and $K$ is a field, the incidence algebra $I(P,K)$ of $P$ over $K$ is the algebra of all mappings $f\colon P\times P\to K$ such that $f(p,q)=0$ unless $p\leq q$ with product the convolution \[f\ast g(p,q) = \sum_{p\leq z\leq q}f(p,z)g(z,q).\]  This is a finite dimensional algebra with basis the functions $\delta_{p,q}$ with $p\leq q$ and identity $\sum_{p\in P}\delta_{p,p}$.  If $Q(P)$ is the Hasse diagram of $P$, with all edges oriented from smaller elements to bigger elements, then $I(P,K)\cong KQ(P)^{op}/I$ where $I$ is generated by all differences $p-q$ with $p,q$ coterminal directed paths.  Note that here we are following the usual convention for multiplication in incidence algebras, whereas in~\cite{ourmemoirs} we used the opposite convention on multiplication.  Thus our $I(P,K)$ coincides with what is written as $I(P^{op},K)$ in~\cite{ourmemoirs}.

Let $P$ be a (finite) graded poset and let $Q(P)$ be the Hasse diagram of $P$, made a quiver as above.  We shall write $q\leftarrow p$ for the edge from $p$ to $q$ if $q$ covers $p$.  Let $R$ be the system of relations
\begin{equation}\label{eq:quiver.rels}
r_{p,q}=\sum_{p<z<q}(q\leftarrow z\leftarrow p),\quad \rk[p,q]=2.
\end{equation}
Note that the ideal $I$ generated by $R$ is homogeneous and generated in degree $2$, and so $KQ(P)/I$ is a quadratic algebra.  Our goal is to axiomatize those split basic algebras isomorphic to $KQ(P)/I$ and to prove that if each open interval of $P$ is Cohen-Macaulay, then $KQ(P)/I$ is Koszul and is the Koszul dual of the incidence algebra of $P$.  The following is a revamped version of~\cite[Theorem~6.1]{ourmemoirs}.

\begin{Thm}\label{t:quiverpres}
Let $P$ be a graded poset with oriented Hasse diagram $Q(P)$.  Let $A$ be a split basic algebra over a field $K$ such that there is a bijection $p\mapsto S_p$ from $P$ to the set of isomorphism classes of simple $A$-modules such that:
\begin{enumerate}
\item  $\Ext_A^1(S_p,S_q) \cong \begin{cases} K, &\text{if}\ \rk[p,q]=1\\ 0, & \text{else.}\end{cases}$
\item $\Ext^2_A(S_p,S_q) \cong \begin{cases} K, & \text{if}\ \rk[p,q]=2\\ 0, & \text{else.}\end{cases}$
\item There is a decomposition $1=\sum_{p\in P}\eta_p$ into orthogonal primitive idempotents with $A\eta_p/\rad(A)\eta_p\cong S_p$ and an $A$-module homomorphism $\partial\colon A\to A$ such that:
\begin{itemize}
\item [(a)] $\partial^2=0$;
\item [(b)] $\partial(\eta_p)\eta_p=0$, for all $p\in P$;
\item [(c)] $\partial(\eta_q)\eta_p\neq 0$ if $\rk[p,q]=1$.
\end{itemize}
\end{enumerate}
Then $A$ has a quiver presentation $(Q(P),R)$ where $R$ is  the system of  relations \eqref{eq:quiver.rels}.  In particular, the ideal $I$ generated by $R$ is a homogeneous ideal  and $A\cong KQ(P)/I$ is a quadratic algebra.  Conversely, if $A=KQ/I$ (where $I$ is generated by $R$), then $A$ satisfies (1)--(3).
\end{Thm}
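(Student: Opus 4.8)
The plan is to prove both implications, with the forward direction (conditions (1)--(3) imply the presentation) being the substantial one. The first move is to read off the quiver: since the number of arrows $S_p\to S_q$ equals $\dim_K\Ext^1_A(S_p,S_q)$, hypothesis (1) says the quiver of $A$ is exactly the oriented Hasse diagram $Q(P)$, which is acyclic as $P$ is finite. A preliminary observation used throughout is that gradedness forces every directed path from $p$ to $q$ in $Q(P)$ to have length exactly $\rk[p,q]$, since a path is a chain of covers and each cover raises the rank by one. Consequently, in $KQ(P)$ the span $\eta_q(KQ_m)\eta_p$ of length-$m$ paths vanishes unless $m=\rk[p,q]$; I will lean on this rank-grading repeatedly.

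Next I would set $d=\partial(1)$. Since $\partial$ is a homomorphism of left $A$-modules we have $\partial(a)=ad$, so (a)--(c) translate into $d^2=0$, $\eta_p d\eta_p=0$, and $\eta_q d\eta_p\neq 0$ when $\rk[p,q]=1$. For each cover put $\alpha_{q,p}=\eta_q d\eta_p\in\eta_q\rad(A)\eta_p$. Because $\rk[p,q]=1$ admits no path of length $\geq 2$ from $p$ to $q$, we get $\eta_q\rad^2(A)\eta_p=0$, hence $\eta_q\rad(A)\eta_p\cong\Ext^1_A(S_p,S_q)\cong K$, and the nonzero $\alpha_{q,p}$ is a basis vector. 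This lets me build the presentation map $\Phi\colon KQ(P)\to A$ sending $\varepsilon_p\mapsto\eta_p$ and each arrow $(q\leftarrow p)\mapsto\alpha_{q,p}$; by the standard theory it is surjective with admissible kernel $I$, and $A\cong KQ(P)/I$. To see $R\subseteq I$, note that the off-diagonal components $\eta_z d\eta_w$ vanish unless $w<z$, so expanding $\eta_q d^2\eta_p$ only the terms with $p<z<q$ survive, giving $\Phi(r_{p,q})=\eta_q d^2\eta_p=0$ since $d^2=0$.

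The heart of the matter is to upgrade $\langle R\rangle\subseteq I$ to equality, and here I combine Bongartz's theorem (Theorem~\ref{t:bongartz}) with the rank-grading. The minimal relations from $p$ to $q$ live in $\eta_q\bigl(I/(JI+IJ)\bigr)\eta_p$, of dimension $\dim_K\Ext^2_A(S_p,S_q)$, which by (2) is $1$ if $\rk[p,q]=2$ and $0$ otherwise. For $\rk[p,q]=2$ there are no length-$3$ paths, so $\eta_q(JI+IJ)\eta_p\subseteq\eta_q J^3\eta_p=0$; thus $\eta_q\bigl(I/(JI+IJ)\bigr)\eta_p=\eta_q I\eta_p$ contains the nonzero homogeneous element $r_{p,q}$, which therefore spans this one-dimensional space. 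Hence the image of $R$ is a $K$-basis of $I/(JI+IJ)$, and Nakayama's lemma (applied to $I$ as a $KQ(P)$-bimodule) yields $I=\langle R\rangle$. Since $R$ is homogeneous of degree $2$, the ideal is homogeneous and $A$ is quadratic. For the converse I would take $A=KQ(P)/\langle R\rangle$ with $\eta_p=\varepsilon_p$ and $d=\sum_{\rk[p,q]=1}(q\leftarrow p)$, define $\partial(a)=ad$, and check (1)--(3) directly: (1) reads off the quiver; $\partial^2=0$ because $\eta_q d^2\eta_p=r_{p,q}=0$ in $A$; (b) and (c) hold because diagonal arrow-components vanish and the unique arrow $p\to q$ survives in $J/J^2$; and (2) follows by running the same $I/(JI+IJ)$ computation in reverse.

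I expect the main obstacle to be precisely this minimality step: matching the explicit relations $r_{p,q}$ against $\dim_K\Ext^2_A$ via Bongartz, together with the bookkeeping showing that gradedness makes $\eta_q(JI+IJ)\eta_p$ vanish in the rank-$2$ slots. Everything else amounts to a careful but routine translation between the differential $\partial$ and the path-algebra presentation.
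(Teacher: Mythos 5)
Your proof is correct, and its overall architecture matches the paper's: read the quiver off hypothesis (1), set $d=\partial(1)$ so that $\partial$ is right multiplication by $d$, take the arrows to be $\alpha_{q,p}=\eta_q d\eta_p$ (nonzero by (3)(c), spanning $\eta_q\rad(A)\eta_p\cong K$ because gradedness kills longer paths), build the surjection $\Phi\colon KQ(P)\to A$, and kill the relations $r_{p,q}$ by expanding $\eta_q d^2\eta_p$ over the resolution of the identity $1=\sum_z\eta_z$. Where you genuinely diverge is the crux minimality step, $\langle R\rangle = \ker\Phi$. The paper starts from an \emph{arbitrary} minimal system of relations $R'$ for $I=\ker\Phi$, applies the cardinality form of Bongartz's theorem (Theorem~\ref{t:bongartz}, the statement actually quoted in the paper) together with gradedness to see that $R'=\{s_{p,q}\mid \rk[p,q]=2\}$ with each $s_{p,q}$ a nonzero combination of length-two paths, and then matches degrees to conclude $r_{p,q}=k_{p,q}s_{p,q}$ with $k_{p,q}\neq 0$, whence $\langle R\rangle=\langle R'\rangle=I$. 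You instead invoke the bimodule identification $\Ext^2_A(S_p,S_q)\cong \varepsilon_q\bigl(I/(JI+IJ)\bigr)\varepsilon_p$, observe that gradedness forces $\varepsilon_q(JI+IJ)\varepsilon_p\subseteq \varepsilon_qJ^3\varepsilon_p=0$ in the rank-two slots so that the classes of the $r_{p,q}$ form a basis of $I/(JI+IJ)$, and finish with Nakayama, which is legitimate here because $Q(P)$ is acyclic and hence the arrow ideal of $KQ(P)$ is nilpotent. Be aware that the identification you use is Bongartz's underlying result rather than the cardinality statement the paper quotes; for acyclic quivers the two are equivalent (via precisely the Nakayama argument you run), so nothing is lost, but strictly speaking you lean on a standard fact not literally stated in the paper. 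What your route buys is a cleaner, more structural minimality argument that avoids bookkeeping against an abstract $R'$; what the paper's route buys is that it uses only the theorem as quoted. Your converse (taking $\eta_p=\varepsilon_p$, $d$ the sum of all arrows, $\partial(a)=ad$, and rerunning the $I/(JI+IJ)$ computation for (2)) is equivalent to the paper's, which instead verifies that $R$ itself is a minimal system of relations and reapplies Theorem~\ref{t:bongartz}.
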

\begin{proof}
Suppose that $A$ satisfies (1)--(3).  Then we can identify $Q(A)$ with $Q(P)$ by (1).
Set $Q=Q(A)=Q(P)$ and
suppose that there is an edge $q\leftarrow p$ in $Q$.  Then since $P$ is graded there are no paths of length greater than one in $Q$ from $p$ to $q$, and so $\eta_q\rad^2(A)\eta_p=0$.  Therefore, we have by (1) that
\begin{align*}
1&=\dim_K\Ext^1_{A}(S_p,S_q)= \dim_K\eta_q[\rad(A)/\rad^2(A)]\eta_p \\ &= \dim_K\eta_qA\eta_p.
\end{align*}
Observe that, by (3)(c), $0\neq \partial(\eta_q)\eta_p = \eta_q\partial(\eta_q)\eta_p$, and so $\{\partial(\eta_q)\eta_p\}$ is a basis for $\eta_qA\eta_p\cong \eta_q[\rad(A)/\rad^2(A)]\eta_p$.  Thus we have a surjective homomorphism $\p\colon KQ\to A$ given by $\p(\varepsilon_p)=\eta_p$ and $\p(q\leftarrow p) = \partial(\eta_q)\eta_p$ with $\ker \p$ admissible by the general theory of split basic finite dimensional algebras~\cite{assem,benson}.

Next we show that $r_{p,q}\in \ker \p$ if $\rk[p,q]=2$. Note that
\begin{equation}\label{imageofrelation}
\p(r_{p,q}) = \sum_{p< z< q} \partial(\eta_q)\eta_z\partial(\eta_z)\eta_p =\sum_{p< z< q} \eta_q\partial(\eta_q)\eta_z\partial(\eta_z)\eta_p.
\end{equation}
Notice that if $z\in P$ and $p<z<q$ is false, then \[\eta_q\partial(\eta_q)\eta_z\partial(\eta_z)\eta_p=0.\]  Indeed, if $z\notin [p,q]$, then there is no path in $Q$ from $p$ to $q$ through $z$, and so $\eta_qA\eta_zA\eta_p=0$.  If $z\in \{p,q\}$ we use that $\partial(\eta_p)\eta_p=0=\partial(\eta_q)\eta_q$ by (3)(b). Thus the right hand side of \eqref{imageofrelation} is equal to
\begin{align*}
\sum_{z\in P} \eta_q\partial(\eta_q)\eta_z\partial(\eta_z)\eta_p &= \sum_{z\in P}\partial[\partial(\eta_q)\eta_z]\eta_p\\ &= \partial\left[\partial(\eta_q)\cdot\sum_{z\in P}\eta_z\right]\eta_p\\ &=\partial^2(\eta_q)\eta_p \\ &=0
\end{align*}
using that $\partial^2=0$ and $\sum_{z\in P}\eta_z=1$.
This proves that $\p(r_{p,q})=0$.

To complete the proof of the forward implication, let $R'$ be a minimal system of relations for $I=\ker \p$. We claim that \[R'=\{c_{p,q}\cdot r_{p,q}\mid \rk[p,q]=2\}\] where each $c_{p,q}\in K\setminus \{0\}$.  It will then follow that the $r_{p,q}$ also form a minimal system of relations for $I$.

By Theorem~\ref{t:bongartz} and (2) we have that $R' = \{s_{p,q}\mid \rk[p,q]=2\}$ where $s_{p,q}$ is a non-zero linear combination of paths from $p$ to $q$ in $Q$, necessarily of length $2$ since $P$ is graded. It follows that if $\rk[p,q]=2$, then
\begin{equation*}
r_{p,q} = \sum_{\rk[u,v]=2}p_{u,v}s_{u,v}q_{u,v}
\end{equation*}
where $p_{u,v}\in \varepsilon_qKQ\varepsilon_v$ and $q_{u,v}\in \varepsilon_uKQ\varepsilon_p$.  But since each path in $r_{p,q}$ has length $2$ and each path in $s_{u,v}$ has length $2$, we conclude that $p_{u,v}s_{u,v}q_{u,v}=0$ if $(u,v)\neq (p,q)$ and that $r_{p,q}=k_{p,q}\cdot s_{p,q}$ for some $k_{p,q}\in K\setminus 0$.  This establishes the claim and completes the proof of the forward implication.

Suppose conversely that $A=KQ/I$ where $I$ is the ideal generated by the system of relations \eqref{eq:quiver.rels}.  We will abuse notation and identify vertices and arrows of $Q$ with their images in $KQ/I$.   Put $S_p= (KQ/I)\varepsilon_p$. Then since the quiver of $A$ has an edge from $p$ to $q$ if and only if $\rk[p,q]=1$, we deduce that (1) holds.  The argument of the preceding paragraph with $R'$ taken to be a subset of $R$ shows that $R$ is a minimal set of relations, and so (2) holds by Theorem~\ref{t:bongartz}.  Finally, we take $\eta_p=\varepsilon_p+I$ and we define $\partial$ on $KQ$ on the $KQ$-basis $1$ by $\partial 1 = \sum_{\rk[p,q]=1}(q\leftarrow p)$.  In general, $\partial x = x\partial 1$, and so it is immediate that $\partial I\subseteq I$.  Hence we have an induced module homomorphism $\partial\colon A\to A$.  To show that $\partial^2=0$, observe that
\begin{align*}
\partial^2(1) &= \partial\sum_{\rk[p,q]=1}(q\leftarrow p) = \sum_{\rk[p,q]=1}(q\leftarrow p)\cdot \sum_{\rk[p',q']=1}(q'\leftarrow p') \\ &= \sum_{\rk[p',q]=2}\sum_{p'<z<q} (q\leftarrow z\leftarrow p')=\sum_{\rk[p',q]=2}r_{p',q}=0.
\end{align*}
Next observe that $\partial(\eta_p)\eta_p = \eta_p\sum_{\rk[u,v]=1}(v\leftarrow u)\eta_p =0$, as $p=v=u=p$ is impossible if $\rk[u,v]=1$.  Also, if $\rk[p,q]=1$, then $\partial(\eta_q)\eta_p = \eta_q\sum_{\rk[u,v]=1}(v\leftarrow u)\eta_p = q\leftarrow p\neq 0$.  Thus $KQ/I$ satisfies (1)--(3) (and, moreover, the isomorphism constructed in the previous part based on this definition of $\partial$ will be the identity map).
\end{proof}

A maximal face of a simplicial complex is called a \emph{facet}.
A simplicial complex $K$ is called a \emph{chamber complex} if it is pure (all facets have the same dimension) and any two facets can be connected by a gallery.  Recall that if $C,C'$ are facets, then a \emph{gallery} from $C$ to $C'$ is a sequence $C=C_0,\ldots,C_n=C'$ of facets such that $C_i,C_{i+1}$ are distinct and adjacent, where  $C_i,C_{i+1}$ are \emph{adjacent} if they have a common codimension one face. Let $P$ be a graded poset. Then $P$ is said to be \emph{strongly connected} if $\Delta([p,q])$ is a chamber complex for each closed interval $[p,q]$ of $P$.  Note that if $\Delta((p,q))$ is a chamber complex, then so is $\Delta([p,q])$ as is easily checked (or see~\cite[Remark~7.3]{ourmemoirs}).

The reader is referred to~\cite{topmethods} for the definition of a Cohen-Macaulay poset over a field $K$.  What we shall need is that the order complex of a Cohen-Macaulay poset is a chamber complex~\cite[Proposition~11.7]{topmethods} (note that a poset whose order complex is a chamber complex is called strongly connected in~\cite{topmethods}).  We shall also need that if $P$ is a poset, then $I(P,K)$ is Koszul if and only if $(p,q)$ is Cohen-Macaulay over $K$ for all $p<q$ in $P$~\cite{polo,woodcock}.  By the above discussion, if $(p,q)$ is Cohen-Macaulay for all open intervals in $P$, then $\Delta(P)$ is strongly connected in our sense.  It then follows that if $Q$ is the Hasse diagram of $P$ with the edges oriented from smaller elements to larger elements, then the Koszul dual of $I(P,K)$ is $KQ/I$ where $I$ is generated by the system of relations \eqref{eq:quiver.rels} by~\cite[Theorem~7.5]{ourmemoirs} (where we note that the quadratic dual coincides with the Koszul dual for a Koszul algebra). In light of Theorem~\ref{t:quiverpres}, we may summarize this as follows.

\begin{Thm}\label{t:quiverpres.koszul}
Let $P$ be a graded poset whose open intervals are Cohen-Macaulay over $K$.  Let $Q(P)$ be the oriented Hasse diagram of $P$ (where edges go up in the order).  Let $A$ be a split basic algebra over a field $K$ such that there is a bijection $p\mapsto S_p$ from $P$ to the set of isomorphism classes of simple $A$-modules such that:
\begin{enumerate}
\item  $\Ext_A^1(S_p,S_q) \cong \begin{cases} K, &\text{if}, \rk[p,q]=1\\ 0, & \text{else.}\end{cases}$
\item $\Ext^2_A(S_p,S_q) \cong \begin{cases} K, & \text{if}\ \rk[p,q]=2\\ 0, & \text{else.}\end{cases}$
\item There is a decomposition $1=\sum_{p\in P}\eta_p$ into orthogonal primitive idempotents with $A\eta_p/\rad(A)\eta_p\cong S_p$ and an $A$-module homomorphism $\partial\colon A\to A$ such that:
\begin{itemize}
\item [(a)] $\partial^2=0$;
\item [(b)] $\partial(\eta_p)\eta_p=0$, for all $p\in P$;
\item [(c)] $\partial(\eta_q)\eta_p\neq 0$ if $\rk[p,q]=1$.
\end{itemize}
\end{enumerate}
Then $A$ has a quiver presentation $(Q(P),R)$ where $R$ is  the system of  relations \eqref{eq:quiver.rels}.  Moreover,  $A$ is a Koszul algebra with Koszul dual the incidence algebra $I(P,K)$.
\end{Thm}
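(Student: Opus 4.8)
The plan is to assemble the conclusion from three inputs that are already in place: the quiver presentation furnished by Theorem~\ref{t:quiverpres}, the Koszulity of the incidence algebra guaranteed by the Cohen-Macaulay hypothesis, and the identification of the Koszul dual of $I(P,K)$ with $KQ(P)/I$. None of these requires fresh analytic work, so the proof is essentially a matter of bolting the cited results together and keeping the conventions straight.

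First I would invoke Theorem~\ref{t:quiverpres} verbatim. The hypotheses (1)--(3) here are word-for-word those of that theorem, and $P$ is assumed graded, so Theorem~\ref{t:quiverpres} immediately delivers that $A$ admits the quiver presentation $(Q(P),R)$ with $R$ the system of relations \eqref{eq:quiver.rels}, and hence that $A\cong KQ(P)/I$ as a quadratic algebra, where $I=\langle R\rangle$. This disposes of the first assertion with no further input; in particular the Cohen-Macaulay hypothesis plays no role in this step.

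Next I would establish Koszulity and identify the dual. The Cohen-Macaulay hypothesis is used in two ways. On the one hand, the order complex of each open interval $(p,q)$ is then a chamber complex by~\cite[Proposition~11.7]{topmethods}, so $P$ is strongly connected in the sense required to apply~\cite[Theorem~7.5]{ourmemoirs}; that theorem identifies the quadratic dual of $I(P,K)$ with $KQ(P)/I$, the ideal $I$ being generated by the relations \eqref{eq:quiver.rels}. On the other hand, the Polo--Woodcock criterion~\cite{polo,woodcock} shows that $I(P,K)$ is itself Koszul, since all its open intervals are Cohen-Macaulay over $K$. For a Koszul algebra the quadratic dual coincides with the Koszul dual, so the Koszul dual of $I(P,K)$ is precisely $KQ(P)/I$, which is the algebra $A$ produced in the first step.

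Finally I would close the loop via the standard biduality for Koszul algebras. Writing $B=I(P,K)$, Koszulity of $B$ gives that its Koszul dual $B^{!}\cong A$ is again Koszul and that $(B^{!})^{!}\cong B$; reading this off says that $A$ is Koszul with Koszul dual $I(P,K)$, as claimed. The only genuine hazard in the argument is bookkeeping: one must align the orientation and the $op$-convention between the incidence-algebra description and the quiver presentation, exactly the point flagged in the paragraph preceding the theorem, so that the oriented Hasse diagram $Q(P)$ (edges pointing up) is the quiver matching $I(P,K)$ rather than $I(P^{op},K)$. Once these conventions are reconciled there is no further obstacle, since all the substantive content resides in Theorem~\ref{t:quiverpres}, in~\cite[Theorem~7.5]{ourmemoirs}, and in the Polo--Woodcock characterization.
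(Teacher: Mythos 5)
Your proposal is correct and follows essentially the same route as the paper: the paper's own justification is exactly the paragraph preceding the theorem, which combines Theorem~\ref{t:quiverpres} for the presentation, the Polo--Woodcock criterion for Koszulity of $I(P,K)$, and~\cite[Theorem~7.5]{ourmemoirs} (via strong connectedness from the Cohen-Macaulay hypothesis) to identify the Koszul dual, with the $P$ versus $P^{op}$ convention handled as you note. Your explicit appeal to Koszul biduality just makes precise a step the paper leaves implicit.
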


We shall apply Theorem~\ref{t:quiverpres.koszul} to CW left regular bands of abelian groups in good characteristic to obtain a quiver presentation, generalizing~\cite[Theorems~6.2 and~7.12]{ourmemoirs} from the case of CW left regular bands.

Before doing so, we shall need some purely algebraic properties of the algebra of a left regular band of groups.

\begin{Prop}\label{p:move.past}
Let $M$ be a left regular band of groups with left regular band of idempotents $B$ and $K$ a commutative ring.  Let $e\in B$, $a\in KG_e$ and $x\in KB$.  Then $ax = exa$.
\end{Prop}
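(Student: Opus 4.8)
The plan is to reduce to basis elements and then exploit the conjugation formula of Proposition~\ref{p:lrbg.pi}. By $K$-bilinearity of the product, it suffices to verify the identity when $a=g$ is a single element of $G_e$ and $x=b$ is a single idempotent of $B$; indeed, writing $a=\sum_g c_g g$ and $x=\sum_b d_b b$ turns both $ax$ and $exa$ into the corresponding sums over the terms $gb$ and $ebg$. Thus the claim reduces to showing $gb=ebg$.

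For this, I would first record two facts about $g\in G_e$: since $g$ is a unit of $G_e$ we have $g^{\omega}=e$ and $g^{\dagger}=g^{-1}$, the inverse of $g$ in $G_e$. Applying Proposition~\ref{p:lrbg.pi}(3) with $m=g$ and with the generic idempotent taken to be $b$ then gives
\[
gbg^{-1}=g^{\omega}b=eb.
\]
Right-multiplying by $g$ and using $g^{-1}g=g^{\dagger}g=g^{\omega}=e$ yields $gbe=ebg$, so everything comes down to the absorption identity $gbe=gb$.

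To finish, I would use that $M$, being a regular left duo monoid, is right semicentral, so that $eme=em$ for every idempotent $e$ and every $m\in M$. Since $e$ is the identity of $G_e$ we have $g=eg$, hence $gb=egb$, and applying right semicentrality with $m=gb$ gives $e(gb)e=e(gb)$. Therefore $gbe=(egb)e=e(gb)e=e(gb)=egb=gb$, and combining this with $gbe=ebg$ produces $gb=ebg$, as required. The only genuinely delicate point is the trailing $e$ that the right-multiplication by $g$ introduces; absorbing it via right semicentrality (equivalently, via the left regular band identity $ebe=eb$ together with $g=ge$) is the one observation that is not completely automatic. Beyond that, the argument is a direct application of the left-regular-band-of-groups conjugation formula, so I expect no serious obstacle.
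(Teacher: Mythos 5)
Your proof is correct and follows essentially the same route as the paper: reduce by bilinearity to single elements $g\in G_e$, $b\in B$, apply the conjugation formula $gbg^{\dagger}=g^{\omega}b=eb$ of Proposition~\ref{p:lrbg.pi}, and right-multiply by $g$. The only (cosmetic) difference is the justification of the absorption identity $gbe=gb$: you use right semicentrality ($eme=em$), while the paper uses that $gb\in Mg=Mg^{\omega}$ forces $gbg^{\omega}=gb$.
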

\begin{proof}
If $g\in G_e$ and $b\in B$, then $gbg^{\dagger}=eb$ by Proposition~\ref{p:lrbg.pi}, and so $ebg=gbg^\dagger g=gbg^{\omega}=gb$ (where the last equality was observed in the beginning of the proof of Proposition~\ref{p:identify.poset}). It then follows easily by induction on the size of the support that $gx=exg$ for all $x\in KB$, and then again by induction on the size of the support that $ax=exa$ for all $a\in KG_e$.
\end{proof}

\begin{Thm}\label{t:quiver.pres.CW.lrbg}
Let $M$ be a CW left regular band of abelian groups with CW left regular band of idempotents $B$ and $K$ an algebraically closed field whose characteristic divides the order of no maximal subgroup of $G$.  Let $P(M)$ be the poset of pairs $(X,\chi)$ with $\chi\in \wh G_X$ ordered by $(X,\chi)\leq (Y,\theta)$ if $X\leq Y$ and $\theta=\chi\circ \rho_X$.
\begin{enumerate}
  \item $P(M)$ is graded and each open interval in $P(M)$ is Cohen-Macaulay.
  \item $KM$ is a Koszul algebra and is the Koszul dual of the incidence algebra $I(P(M),K)$.
  \item $KM$ has quiver presentation $(Q(P(M)),R)$ where $Q(P(M))$ is the Hasse diagram of $P(M)$ with edges oriented upward and $R$ is the minimal system of relations given by \eqref{eq:quiver.rels}.
\end{enumerate}
\end{Thm}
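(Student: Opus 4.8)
The plan is to deduce all three items at once by verifying the hypotheses of Theorem~\ref{t:quiverpres.koszul} with $P = P(M)$ and $A = KM$: the gradedness and Cohen--Macaulay conditions of item (1) are precisely the standing hypotheses of that theorem, while items (2) and (3) are then its conclusions. First I would establish item (1). By Proposition~\ref{p:look.up} every closed, and hence every open, interval of $P(M)$ is isomorphic to an interval of $\Lambda(B)$, with $\rk[(X,\chi),(Y,\theta)] = \rk[X,Y]$. Since $B$ is a CW left regular band, $\Lambda(B)$ is graded by~\cite[Theorem~6.2]{ourmemoirs}, so $P(M)$ is graded; and the open intervals of $\Lambda(B)$ are Cohen--Macaulay over $K$ by~\cite{ourmemoirs} (this being what underlies the Koszulity of $KB$ established there), so by the interval isomorphism the open intervals of $P(M)$ are Cohen--Macaulay as well, exactly as noted in the discussion following Proposition~\ref{p:look.up}.

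Next I would verify hypotheses (1) and (2) of Theorem~\ref{t:quiverpres.koszul}. The bijection $p = (X,\chi) \mapsto S_p$ between $P(M)$ and the isomorphism classes of simple $KM$-modules is the one described before Corollary~\ref{c:quiver.lrbg}. Theorem~\ref{t:first.stab}, as specialized through Corollary~\ref{c:cwlrbg.quiver}, shows that in the abelian, good-characteristic case $\Ext^r_{KM}(S_p,S_q)$ vanishes unless $q \geq p$ in $P(M)$ and $r = \rk[p,q]$, in which case it is one-dimensional. Reading this off at $r = 1$ and $r = 2$ gives hypotheses (1) and (2) verbatim.

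The real work is hypothesis (3): producing orthogonal primitive idempotents $\eta_p$ together with an $A$-module endomorphism $\partial$ satisfying $\partial^2 = 0$, $\partial(\eta_p)\eta_p = 0$, and $\partial(\eta_q)\eta_p \neq 0$ on covers. For the idempotents I would take $\eta_{(X,\chi)} = \gamma_{(X,\chi)} = \eta_X\theta_{X,\chi}\eta_X$ from Theorem~\ref{t:complete.set.left.duo.reg}, where, $G_X$ being abelian, the $\theta_{X,\chi}$ are the character idempotents of $KG_X$ indexed by $\chi \in \wh G_X$; that theorem gives $KM\gamma_{(X,\chi)}/\rad \cong S_{(X,\chi)}$. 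I would then define $\partial$ as right multiplication by an element $w \in \rad(KM)$ assembled, cover by cover, from the $\pm 1$ incidence numbers of the regular CW complexes $e_YB_{\geq X}$ supplied by the CW structure, each incidence transported into the one-dimensional space $\gamma_{(Y,\theta)}KM\gamma_{(X,\chi)}$ (which is nonzero exactly when $\theta = \chi\circ\rho_X$, i.e.\ on covers of $P(M)$). Right multiplication is automatically a left $KM$-module map; $\partial(\eta_p)\eta_p = 0$ holds because gradedness forbids loops; and $\partial(\eta_q)\eta_p \neq 0$ on covers because the incidence numbers are units. The crux is $\partial^2 = 0$, i.e.\ $w^2 = 0$, which I expect to descend from the defining identity $\partial_{\mathrm{cell}}^2 = 0$ of the cellular chain complex (the relation $\sum_z [\sigma:z][z:\rho] = 0$ over each rank-two interval), once one checks that the character twists multiply coherently. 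This coherence is exactly where Proposition~\ref{p:move.past} enters, letting one commute group-algebra coefficients past the band idempotents $\eta_X$ so that the computation reduces to the trivial-group situation of~\cite{ourmemoirs}.

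Granting hypothesis (3), Theorem~\ref{t:quiverpres.koszul} yields item (3), namely the presentation $(Q(P(M)),R)$ with $R$ the relations~\eqref{eq:quiver.rels}, minimal since their number matches $\dim_K\Ext^2$ by Theorem~\ref{t:bongartz} and hypothesis (2); and it yields item (2), that $KM$ is Koszul with Koszul dual $I(P(M),K)$. I expect the construction and verification of $\partial$, specifically reconciling the cellular incidence numbers with the character idempotents so as to force $w^2 = 0$, to be the main obstacle, everything else being an assembly of results already in place.
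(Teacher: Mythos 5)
Your reduction to Theorem~\ref{t:quiverpres.koszul} is exactly the paper's strategy, and your treatment of item (1) and of hypotheses (1)--(2) (via Proposition~\ref{p:look.up}, \cite[Theorems~6.2 and~7.14]{ourmemoirs}, Theorem~\ref{t:first.stab} and Corollary~\ref{c:cwlrbg.quiver}) matches the paper, as does your choice of idempotents $\gamma_{X,\chi}=\eta_X\theta_{X,\chi}\eta_X$ from Theorem~\ref{t:complete.set.left.duo.reg}. The genuine gap is hypothesis (3)(a): you propose to build $w$ from scratch, cover by cover, by choosing a nonzero element of each one-dimensional space $\gamma_{(Y,\theta)}KM\gamma_{(X,\chi)}$ and weighting it by a CW incidence number, deferring $w^2=0$ to an unverified ``coherence'' of character twists. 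This is not a routine check: each spanning element $w_{q,p}$ is determined only up to a nonzero scalar, and the cellular identity $\sum_z[\sigma:z][z:\rho]=0$ forces $w^2=0$ only if the products $w_{r,q}w_{q,p}$ over the different middle vertices $q$ of a rank-two interval are \emph{equal} (not merely proportional) elements of $\gamma_{r}KM\gamma_{p}$. Producing such a globally consistent system of scalars is essentially the problem you are trying to solve, so as written the crux of hypothesis (3) is missing.

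The paper avoids this issue entirely by never constructing a new differential: the proof of \cite[Theorem~6.2]{ourmemoirs} already provides $\partial\colon KB\to KB$ (a $KB$-module map, hence right multiplication by $\partial(1)\in KB$) satisfying $\partial^2=0$, $\partial(\eta_X)\eta_X=0$ and $\partial(\eta_Y)\eta_X\neq 0$ on covers; one extends it to $KM$ as right multiplication by the same element $\partial(1)$, so $\partial^2=0$ on $KM$ is inherited for free from $\partial(1)^2=0$ in $KB$. The real work then sits where your outline assumed none was needed: verifying (3)(b) and especially (3)(c) for the twisted idempotents $\gamma_{X,\chi}$. There Proposition~\ref{p:move.past} is used to commute the character idempotents past elements of $KB$ (this is its actual role, rather than fixing $w^2=0$), yielding $\partial(\gamma_{Y,\psi})\gamma_{X,\chi}=\partial(\eta_Y)\eta_X\theta_{X,\chi}$, and nonvanishing follows because $B\cap L_{e_X}$ is a free basis of $KL_{e_X}$ as a right $KG_{e_X}$-module, so $\partial(\eta_Y)\eta_X\theta_{X,\chi}$ is a nonzero combination of the linearly independent elements $b\eta_X\theta_{X,\chi}$ with $b\in B\cap L_{e_X}$. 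If you replace your ansatz for $w$ by this extension-by-right-multiplication and supply the (3)(b)--(c) computations, your outline becomes the paper's proof.
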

\begin{proof}
The poset $\Lambda(B)$ is graded by~\cite[Theorem~6.2]{ourmemoirs}, and each of its open intervals is Cohen-Macaulay by~\cite[Theorem~7.14]{ourmemoirs}. Hence the same is true for $P(M)$ by Proposition~\ref{p:look.up} and the discussion thereafter.  In light of Theorem~\ref{t:first.stab} and Corollary~\ref{c:cwlrbg.quiver}, the first two conditions of Theorem~\ref{t:quiverpres.koszul} are satisfied.  Let $\{\gamma_{X,\chi}\mid (X,\chi)\in P(M)\}$ be the complete set of orthogonal primitive idempotents constructed in Theorem~\ref{t:complete.set.left.duo.reg}. Note that $\gamma_{X,\chi}=\eta_X\theta_{X,\chi}\eta_X$ where $\{\eta_X\mid X\in \Lambda(B)\}$ are as per \eqref{eq:defidempotents}, and $\theta_{X,\chi}$ is the primitive idempotent of $KG_X$ corresponding to $\chi$.  We remark that if $a\in KG_X$, then $a\theta_{X,\chi} = \chi(a)\theta_{X,\chi}$  since $\chi$ is the character of a one-dimensional simple module.  By Proposition~\ref{p:move.past} (since $\eta_X\in kB$), we have that $\gamma_{X,\chi}=\eta_X\theta_{X,\chi}\eta_X= \eta_Xe_X\eta_X\theta_{X,\chi} = \eta_X\theta_{X,\chi}$, and we shall also use this simplified formula.

The proof of~\cite[Theorem~6.2]{ourmemoirs} constructs a differential $\partial\colon KB\to KB$ with $\partial^2=0$, $\partial(\eta_X)\eta_X=0$ and $\partial(\eta_Y)\eta_X\neq 0$ when $\rk[X,Y]=1$.  We can extend $\partial$ to a differential $\partial \colon KM\to KM$ using that $KM$ is a free $KM$-module with basis $1$ and we already have $\partial(1)$ defined with the property $\partial^2(1)=0$.  It remains to check that (3)(b)--(c) of Theorem~\ref{t:quiverpres.koszul} hold (with $\eta_{X,\chi} = \gamma_{X,\chi}$).    First we compute $\partial(\gamma_{X,\chi})\gamma_{X,\chi} = \eta_X\theta_{X,\chi}\partial(\eta_X)\eta_X\theta_{X,\chi}=0$.    Suppose now that $\rk[(X,\chi),(Y,\psi)]=1$.  Then $\rk[X,Y]=1$ and $\psi=\chi\circ \rho_X$.  Therefore, we have $e_X\theta_{Y,\psi}= \rho_X(\theta_{Y,\psi}) = \theta_{X,\chi}$ as $\chi$ inflates to $\psi$ under $\rho_X$.  It follows  using Proposition~\ref{p:move.past} (as $\partial(\eta_Y)\eta_X\in kB$) that
\begin{align*}
\partial(\gamma_{Y,\psi})\gamma_{X,\chi} &= \eta_Y\theta_{Y,\psi}\partial(\eta_Y)\eta_X\theta_{X,\chi}
= \eta_Ye_Y\partial(\eta_Y)\eta_X\theta_{Y,\psi}\theta_{X,\chi} \\ &=\partial(\eta_Y)\eta_Xe_X\theta_{Y,\psi}\theta_{X,\chi}= \partial(\eta_Y)\eta_X\theta_{X,\chi}.
\end{align*}
Now $B\cap L_{e_X}$ is a basis for $KL_{e_X}$  as a right $KG_{e_X}$-module since the $\mathscr R$-classes are the $G_{e_X}$-orbits. So $KL_{e_X}\otimes_{KG_X}KG_X\theta_{X,\chi}$ has $K$-basis the elementary tensors $b\otimes \theta_{X,\chi}$ with $b\in B\cap L_{e_X}$.  The isomorphism $\Ind_{e_X}(\chi)\to KM\eta_X\theta_{X,\chi}$ sends $b\otimes \theta_{X,\chi}$ to $b\eta_X\theta_{X,\chi}$ by the last paragraph of the proof of Theorem~\ref{t:complete.set.left.duo.reg}.  Since $\partial(\eta_Y)\eta_X$ is a nonzero linear combination of elements of the form $b\eta_X$ with $b\in B\cap L_{e_X}$, it follows that $\partial(\eta_Y)\eta_X\theta_{X,\chi}$ is a nonzero linear combination of elements of elements of the form $b\eta_X\theta_{X,\chi}$ with $b\in B\cap L_{e_X}$ and hence is nonzero. Therefore, $\partial(\gamma_{Y,\psi})\gamma_{X,\chi}\neq 0$,  completing the proof that (3) of Theorem~\ref{t:quiverpres.koszul} holds.  The theorem now follows from Theorem~\ref{t:quiverpres.koszul}.
 \end{proof}

We now spell out what the theorem says for Hsiao's monoid of ordered $G$-partitions, when $G$ is abelian, and also describe the connected components of the quiver (that is, the blocks).

\begin{Thm}\label{t:Hsiao.final}
Let $G$ be a finite abelian group and $K$ an algebraically closed field whose characteristic does not divide the order of $G$.  Then the quiver of $K\Sigma_n^G$ is the Hasse diagram $Q(P)$ (with edges oriented upward) of the poset $P$ consisting of pairs $(X,f)$ with $X$ a set partition of $\{1,\ldots, n\}$ and $f\colon X\to \wh G$ a mapping, where $\wh G=\Hom(G,K^\times)$.  One has $(X,f)\leq (Y,g)$ if and only if $X$ is a finer partition than $Y$, and  $g(B)=\prod_{B'\in X,B'\subseteq B}f(B')$ for all $B\in Y$.  Furthermore, $K\Sigma_n^G$ is isomorphic $KQ(P)/I$ where $I$ is generated by the relations that sum all paths between $(X,f),(Y,g)$ when $(X,f)\leq (Y,g)$ and $|X|-|Y|=2$.  Moreover, $K\Sigma_n^G$ is a Koszul algebra with Koszul dual the incidence algebra of the poset $P$.   The connected components of $Q(P)$ are in bijection with $\wh G$ (and hence with $G$).
\end{Thm}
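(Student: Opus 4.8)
The plan is to derive the quiver presentation and Koszulity from the general Theorem~\ref{t:quiver.pres.CW.lrbg} by verifying that $\Sigma_n^G$ falls into its scope, and then to settle the count of connected components by a short combinatorial argument.

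First I would confirm that $\Sigma_n^G$ is a CW left regular band of abelian groups whenever $G$ is abelian. Its band of idempotents is $\Sigma_n$, which is a CW left regular band with $\Lambda(\Sigma_n)\cong \Pi_n$ (each $(\Sigma_n)_{\geq X}$ being the face poset of a permutohedron), and its maximal subgroups $G_X\cong G^{|X|}$ are abelian. Since the characteristic of $K$ does not divide $|G|$, it divides no $|G_X|=|G|^{|X|}$, so the hypotheses of Theorem~\ref{t:quiver.pres.CW.lrbg} hold. Next I would identify $P(\Sigma_n^G)$ with the poset $P$ of the statement; this identification, together with its order relation, was already recorded following Corollary~\ref{c:abelian}. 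Invoking Theorem~\ref{t:quiver.pres.CW.lrbg} then yields at once that the quiver of $K\Sigma_n^G$ is $Q(P)$, that $K\Sigma_n^G\cong KQ(P)/I$ with $I$ generated by the relations \eqref{eq:quiver.rels}, and that $K\Sigma_n^G$ is Koszul with Koszul dual the incidence algebra $I(P,K)$. To match the explicit description of the relations, I would invoke Corollary~\ref{c:first.stab} to note $\rk[(X,f),(Y,g)]=|X|-|Y|$, so a rank-two interval is exactly one with $|X|-|Y|=2$; as $P$ is graded, every maximal path in such an interval has length two, and $r_{(X,f),(Y,g)}$ is precisely the sum of all paths between the two vertices.

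For the connected components I would introduce the total character map $\Theta\colon P\to \wh G$ defined by $\Theta(X,f)=\prod_{B\in X}f(B)$. The one genuine computation is that $\Theta$ is unchanged along each edge: if $(X,f)<(Y,g)$ is a cover, then $g(B)=\prod_{B'\in X,\,B'\subseteq B}f(B')$ for each $B\in Y$, and since $X$ refines $Y$ every block of $X$ lies in a unique block of $Y$, so grouping the factors gives $\prod_{B\in Y}g(B)=\prod_{B'\in X}f(B')$. Hence $\Theta$ is a connected-component invariant. To see that its fibers are exactly the components, I would exhibit a canonical top vertex in each: writing $\hat 1$ for the one-block partition, the order relation gives $(X,f)\leq (\hat 1,\Theta(X,f))$, and gradedness provides a saturated chain joining them, so every vertex is connected to some $(\hat 1,\chi)$ with $\Theta(\hat 1,\chi)=\chi$. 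Distinct $\chi\in\wh G$ give top vertices of distinct $\Theta$-value, hence lying in distinct components, so the components biject with $\wh G$ via $\Theta$; finally $|\wh G|=|G|$ as $G$ is finite abelian.

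I do not expect a serious obstacle: the structural claims are a direct application of Theorem~\ref{t:quiver.pres.CW.lrbg}, and the only real content is the elementary verification that the total character is a component invariant whose fibers are connected. The main point requiring care is simply the bookkeeping in identifying $P(\Sigma_n^G)$ with $P$ and confirming that the general relations \eqref{eq:quiver.rels} specialize to the stated sum-of-all-paths relations.
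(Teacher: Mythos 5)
Your proposal is correct and takes essentially the same route as the paper: the structural claims are obtained by specializing Theorem~\ref{t:quiver.pres.CW.lrbg} through the identification of $P(\Sigma_n^G)$ with $P$ recorded after Corollary~\ref{c:abelian}, and your total character map $\Theta(X,f)=\prod_{B\in X}f(B)$ is precisely the map the paper uses to biject components with $\wh G$, including the same edge-invariance computation and the same connection of each vertex to the top vertex $(\{\{1,\ldots,n\}\},\Theta(X,f))$. The only cosmetic difference is your appeal to gradedness for the saturated chain, where a maximal chain in any finite poset already suffices.
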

\begin{proof}
Everything except the final statement is immediate from Theorem~\ref{t:quiver.pres.CW.lrbg} and the discussion following Corollary~\ref{c:abelian}.  Define a map $\psi\colon P\to \wh G$ by $\psi(X,f) = \prod_{B\in X}f(B)$.  Then $\psi$ is constant along edges of $Q(P)$ and hence on connected components.  Moreover, $(X,f)$ has a directed path to $(\{\{1,\ldots, n\}\},\psi(X,f))$.  It follows that $\psi$ induces a bijection between path components of $Q(P)$ and $\wh G$, as required.
\end{proof}

\section{Minimal projective resolutions from topology}
We now give finite projective resolutions, using topology, of all the simple modules for finite regular left duo monoids over fields of good characteristic, generalizing~\cite[Theorem~5.12]{ourmemoirs} from the left regular band case. Then, we construct using regular cell complexes the minimal projective resolutions of the simple modules for algebras of CW left regular bands of abelian groups over algebraically closed fields of good characteristic, generalizing~\cite[Corollary~5.32]{ourmemoirs} for the case of CW left regular bands.

Let $M$ be a finite regular left duo monoid with left regular band of idempotents $B$ and let $X\in \Lambda(B)$.  Then we have the contraction $M_{\geq X} = \{m\in M\mid \sigma(m)\geq X\}$, which is also a regular left duo monoid with band of idempotents $B_{\geq X}$ and minimal ideal $\sigma\inv(X)$.  If $K$ is a field, then we have a homomorphism $\rho_X\colon KM\to KM_{\geq X}$ given by
\[\rho_X(m) = \begin{cases} m, & \text{if}\ \sigma(m)\geq X\\ 0, & \text{else.}\end{cases}\]  There is also the homomorphism $\psi_X\colon KM_{\geq X}\to KG(M_{\geq X})$ coming from the group completion and every simple $KM$-module with apex $X$ is inflated via $\psi_X\rho_X$.   Theorem~\cite[Theorem~6.4]{affinetoprep} will then apply to provide projective resolutions of these simples as $KM_{\geq X}$-modules, and  so our first goal is to show that $KM_{\geq X}$ is a projective $KM$-module, which will then allow us to inflate projective $KM_{\geq X}$-module resolutions to $KM$.

Recall that a \emph{principal series} for a monoid $M$ is an unrefinable series of two-sided ideals $\emptyset= I_0\subsetneq I_1\subsetneq\cdots\subsetneq I_n$.  Every finite monoid has a principal series and for any principal series, the sets $I_j\setminus I_{j-1}$ are the $\mathscr J$-classes of $M$ (without repetition), cf.~\cite[Proposition~1.20]{repbook}.

\begin{Prop}\label{p:contract.proj}
Let $M$ be a  finite regular left duo monoid with left regular band of idempotents $B$ and $K$ a field.  Then $KM_{\geq X}$ is a projective $KM$-module for any $X\in \Lambda(B)$.
\end{Prop}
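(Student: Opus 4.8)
The plan is to realize $KM_{\geq X}$, viewed as a left $KM$-module via the homomorphism $\rho_X\colon KM\to KM_{\geq X}$ (equivalently, as $KM/KI_X$ with $I_X=M\setminus M_{\geq X}$), as $KM\zeta_X$ for an explicit idempotent $\zeta_X\in KB$, and then to invoke Proposition~\ref{p:schutz.proj}. Concretely, set $\zeta_X=\sum_{Y\geq X}\eta_Y$, where the $\eta_Y$ are the idempotents of \eqref{eq:defidempotents}. Since the $\eta_Y$ are orthogonal idempotents of $KB$, so is $\zeta_X$, and $KM\zeta_X=\bigoplus_{Y\geq X}KM\eta_Y$. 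Each summand $KM\eta_Y\cong KL_{e_Y}$ is projective by Proposition~\ref{p:schutz.proj}, so $KM\zeta_X$ is a projective left $KM$-module. It therefore suffices to produce a $KM$-module isomorphism $KM\zeta_X\xrightarrow{\ \sim\ }KM_{\geq X}$.

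The natural candidate is the restriction $\phi=\rho_X|_{KM\zeta_X}$; this is automatically a $KM$-module map because the left action on $KM_{\geq X}$ is by $\rho_X$ and $\rho_X$ is an algebra homomorphism. The first point I would establish is that $\rho_X(\zeta_X)$ is the identity of $KM_{\geq X}$. For this I would verify that $\rho_X(\eta_Y)=0$ whenever $Y\not\geq X$: by the expansion following \eqref{eq:defidempotents}, $\eta_Y$ is supported on idempotents $b\leq e_Y$, so $\sigma(b)\leq Y$; if $Y\not\geq X$ then no such $b$ can satisfy $\sigma(b)\geq X$ (otherwise $X\leq\sigma(b)\leq Y$), and hence $\rho_X$ annihilates every term. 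Since $1=\sum_Y\eta_Y$ and $\rho_X(1)=1$, this gives $\rho_X(\zeta_X)=\rho_X(1)=1$. Consequently, for $m\in M_{\geq X}$ we get $\phi(m\zeta_X)=\rho_X(m)\rho_X(\zeta_X)=m$, so $\phi$ is surjective.

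It then remains to match dimensions, which forces $\phi$ to be an isomorphism. The second point is the identity $L_{e_Y}=\sigma^{-1}(Y)$: in a regular left duo monoid $Mm=Mm^{\omega}$ (as $m\in G_{m^{\omega}}$), so $m\eL e_Y$ if and only if $\sigma(m)=Y$. Hence $\dim_K KM\eta_Y=\dim_K KL_{e_Y}=|\sigma^{-1}(Y)|$, and
\[\dim_K KM\zeta_X=\sum_{Y\geq X}\dim_K KM\eta_Y=\sum_{Y\geq X}|\sigma^{-1}(Y)|=|M_{\geq X}|=\dim_K KM_{\geq X}.\]
Therefore the surjection $\phi$ is an isomorphism and $KM_{\geq X}\cong KM\zeta_X$ is projective.

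The orthogonal-idempotent bookkeeping giving $KM\zeta_X=\bigoplus_{Y\geq X}KM\eta_Y$ is routine; the two steps deserving care—and in fact carrying essentially all the content—are the support computation $\rho_X(\eta_Y)=0$ for $Y\not\geq X$ (which pins down $\rho_X(\zeta_X)=1$) and the dimension count via $L_{e_Y}=\sigma^{-1}(Y)$. I do not expect a genuine obstacle here: the principal-series language recalled above could instead be used to prove the statement by exhibiting $KM_{\geq X}$ with a $\mathscr J$-class filtration whose sections are projective, but the idempotent argument is cleaner and bypasses it.
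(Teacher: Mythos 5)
Your proof is correct, and it takes a genuinely different route from the paper's. The paper's own argument is exactly the alternative you sketch in your closing remark: it takes a principal series $\emptyset=I_0\subsetneq I_1\subsetneq\cdots\subsetneq I_n=M_{\geq X}$ of the monoid $M_{\geq X}$, notes that each section $I_k\setminus I_{k-1}$ is a $\mathscr J$-class of $M_{\geq X}$, hence (by regularity, the left duo property, and $\sigma_{M_{\geq X}}=\sigma|_{M_{\geq X}}$) an $\mathscr L$-class $L_{e_k}$, so that $KI_k/KI_{k-1}\cong KL_{e_k}$ is projective by Proposition~\ref{p:schutz.proj}, and then invokes the fact that a module filtered by projectives is projective (\cite[Lemma~5.9]{ourmemoirs}). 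You instead produce the explicit idempotent $\zeta_X=\sum_{Y\geq X}\eta_Y\in KB$ and verify that $\rho_X$ restricts to an isomorphism $KM\zeta_X\to KM_{\geq X}$; your two key steps --- the support computation giving $\rho_X(\eta_Y)=0$ for $Y\not\geq X$ (hence $\rho_X(\zeta_X)=1$ and surjectivity) and the dimension count via $KM\eta_Y\cong KL_{e_Y}$ together with $L_{e_Y}=\sigma^{-1}(Y)$ --- are both sound, resting only on the expansion following \eqref{eq:defidempotents}, Proposition~\ref{p:primidempotentprops}, Proposition~\ref{p:schutz.proj}(2), and the identification of $\Lambda(M)$ with $\Lambda(B)$. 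Your approach buys something extra: projectivity is immediate because $KM\zeta_X$ is a direct summand of the regular module, and you obtain the sharper, more explicit conclusion $KM_{\geq X}\cong KM\zeta_X\cong\bigoplus_{Y\geq X}KL_{e_Y}$, i.e., the paper's filtration actually splits into a direct sum cut out by an explicit idempotent of $KB$. The paper's approach buys brevity: given the principal-series machinery and the filtered-by-projectives lemma, its proof is three lines and needs nothing about the supports of the idempotents $\eta_Y$.
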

\begin{proof}
First note that if $e\in E(M)$, then $KL_e$ is projective by Proposition~\ref{p:schutz.proj}.  Next observe that if $\emptyset= I_0\subsetneq I_1\subsetneq\cdots\subsetneq I_n=M_{\geq X}$ is a principal series, then each $I_k\setminus I_{k-1}=J_{e_k}=L_{e_k}$ for some idempotent $e_k\in M_{\geq X}$ because $M$ is regular and left duo.  Thus $KI_k/K_{k-1}\cong KL_{e_k}$ as a $KM$-module (since $\sigma_{M_{\geq X}} =\sigma|_{M_{\geq X}}$). Since a module filtered by projectives is projective (cf.~\cite[Lemma~5.9]{ourmemoirs}),  $KM_{\geq X}$ is projective. 	
\end{proof}

The following theorem generalizes~\cite[Theorem~5.12]{ourmemoirs} beyond the case of left regular bands.

\begin{Thm}\label{t:proj.res.duo}
Let $M$ be a finite regular left duo monoid with left regular band of idempotents $B$ and $K$ a field whose characteristic does not divide the order of any maximal subgroup of $M$.   Let $X\in \Lambda(B)$ and $V$  simple $KG_X$-module, where $G_X$ is a maximal subgroup of the $\eL$-class $\sigma\inv(X)$.
Then $C_\bullet(\Delta(B_{\geq X}),V)\to V$ is a projective resolution of $V$, where $M_{\geq X}$ acts simplicially on $\Delta(B_{\geq X})$ as per Proposition~\ref{p:identify.poset}, and we view $C_q(\Delta(B_{\geq X}),V)$ as the tensor product $C_q(\Delta(B_{\geq X}),K)\otimes_K V$ as a $KM_{\geq X}$-module, which we then inflate to a $KM$-module via $\rho_X$.
\end{Thm}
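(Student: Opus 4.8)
The plan is to prove the statement by reducing the problem from $M$ to the contraction $M_{\geq X}$ and then transporting the resolution back up. Concretely, I would first produce a projective resolution of $V$ over $KM_{\geq X}$ built from the order complex $\Delta(B_{\geq X})$, and then inflate it along $\rho_X\colon KM\to KM_{\geq X}$, using that $KM_{\geq X}$ is itself a projective $KM$-module by Proposition~\ref{p:contract.proj}.

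The inflation step is the routine half, so I would dispose of it first. The map $\rho_X\colon KM\to KM_{\geq X}$ is a surjective algebra homomorphism (the quotient by $KI_X$, where $I_X=M\setminus M_{\geq X}$), and inflation along $\rho_X$ is restriction of scalars, hence exact and compatible with direct sums. Viewed as a left $KM$-module through $\rho_X$, namely as $KM/KI_X$, the algebra $KM_{\geq X}$ is projective by Proposition~\ref{p:contract.proj}; therefore every free $KM_{\geq X}$-module inflates to a projective $KM$-module, and so does every projective $KM_{\geq X}$-module, being a summand of a free one. Thus an exact complex of projective $KM_{\geq X}$-modules augmenting to $V$ inflates to an exact complex of projective $KM$-modules augmenting to $V$; here one checks that the inflation of $V$ as a $KM_{\geq X}$-module (on which $M_{\geq X}$ acts through the group completion $\psi_X$) is exactly the simple $KM$-module $V$ with apex $X$, since the latter is inflated via $\psi_X\rho_X$.

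It remains to construct the $KM_{\geq X}$-resolution, which is where the work lies. Now $M_{\geq X}$ is itself a finite regular left duo monoid whose minimal ideal is the $\eL$-class $\sigma\inv(X)$, and whose group completion is $G_X=e_XM_{\geq X}e_X$ by Proposition~\ref{p:completeness}(7), with $V$ inflated from $KG_X$ via $\psi_X$. Exactness of $C_\bullet(\Delta(B_{\geq X}),V)\to V$ is essentially topological: the poset $B_{\geq X}$ has maximum element $1$ (since $\sigma(1)$ is the top of $\Lambda(B)$ and $e\leq 1$ for every $e$), so $\Delta(B_{\geq X})$ is a cone and hence contractible. Consequently the augmented complex $C_\bullet(\Delta(B_{\geq X}),K)\to K$ is acyclic; tensoring over the field $K$ with the flat module $V$ preserves exactness, and because $\Delta(B_{\geq X})$ is connected the induced $M_{\geq X}$-action on $H_0=K$ is trivial, so the augmentation target is $V$ with its $\psi_X$-action. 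The nontrivial input is that each chain module $C_q(\Delta(B_{\geq X}),V)$ is a projective $KM_{\geq X}$-module. This is supplied by \cite[Theorem~6.4]{affinetoprep} applied to $M_{\geq X}$, after identifying $\Omega_{M_{\geq X}}(M_{\geq X})=M_{\geq X}/{\R}$ with $B_{\geq X}$ and its $mem^{\dagger}$-action via Proposition~\ref{p:identify.poset}; concretely, the $M_{\geq X}$-action on the $q$-simplices decomposes $C_q(\Delta(B_{\geq X}),K)$ as a direct sum of induced modules $\Ind_{e}(U)$ indexed by orbit representatives of flags, each a summand of copies of $KL_e$ and hence projective by Proposition~\ref{p:schutz.proj}, and a projection-formula argument $\Ind_e(U)\otimes_K V\cong \Ind_e(U\otimes_K V)$ together with semisimplicity of $KG_e$ keeps the diagonal tensor with $V$ projective.

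The main obstacle is precisely this projectivity of the chain modules: the orbit–stabilizer analysis of the $M_{\geq X}$-action on flags in $B_{\geq X}$ that yields the decomposition into induced modules, together with the verification that the hypotheses of \cite[Theorem~6.4]{affinetoprep} hold for $M_{\geq X}$ and that its output matches $\Delta(B_{\geq X})$ under Proposition~\ref{p:identify.poset}. Good characteristic is indispensable here, since it makes $KG_X$ semisimple so that the group-twisted induced modules remain projective; without it the resolution would instead acquire the group-cohomology contributions reflected in the spectral sequence of Theorem~\ref{t:left.duo}(3).
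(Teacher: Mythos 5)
Your proposal is correct and follows essentially the same route as the paper: the paper's proof likewise uses Proposition~\ref{p:contract.proj} to inflate projectives along $\rho_X$, then applies \cite[Theorem~6.4]{affinetoprep} to $M_{\geq X}$ together with the identification of $M_{\geq X}/{\R}$ with $B_{\geq X}$ from Proposition~\ref{p:identify.poset}, with projectivity of the induced modules in good characteristic supplied by Corollary~\ref{c:proj.cover} (your variant via Proposition~\ref{p:schutz.proj} and semisimplicity of $KG_e$ is the same point). The extra details you supply (contractibility of the cone $\Delta(B_{\geq X})$, the orbit decomposition of chains, the projection formula) are exactly the content the paper delegates to the cited theorem.
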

\begin{proof}
It follows from Proposition~\ref{p:contract.proj} that any projective $KM_{\geq X}$-module inflates to a projective $KM$-module.  The result then follows from Proposition~\ref{p:identify.poset} and~\cite[Theorem~6.4]{affinetoprep}, since each induced module is projective by Corollary~\ref{c:proj.cover} (which shows that the induction of any simple, and hence semisimple, module is projective under our assumption on $K$).
\end{proof}

We end this section with a construction of the minimal projective resolutions of the simple modules for a CW left regular band of abelian groups, generalizing the case of CW left regular bands from~\cite{ourmemoirs}.

 Let $A$ be a finite dimensional $K$-algebra and let $M$ be a finite dimensional $A$-module.  A projective resolution $P_\bullet\to M$ is said to be \emph{minimal} if, for each $q>0$, one has $d(P_q)\subseteq \rad(P_{q-1})$ or, equivalently, $d\colon P_{q-1}\to d(P_{q-1})$ is a projective cover. Every finite dimensional module admits a minimal projective resolution, which is unique up to isomorphism.  The following proposition is stated in the context of group algebras in~\cite[Proposition~3.2.3]{cohomologyringbook}, but the proof given there is valid for any finite dimensional algebra.

\begin{Prop}\label{p:minresolution}
Let $A$ be a finite dimensional $K$-algebra over a field $K$, $M$ a finite dimensional $A$-module and $P_\bullet\to M$ a projective resolution.  Then the following are equivalent.
\begin{enumerate}
\item $P_\bullet\to M$ is the minimal resolution of $M$.
\item $\Hom_A(P_q,S)\cong\Ext_A^q(M,S)$ for any $q\geq 0$ and simple $A$-module $S$.
\item The coboundary map $d^*\colon \Hom_A(P_q,S)\to \Hom_A(P_{q+1},S)$ is the zero map for all simple $A$-modules $S$ and $q\geq 0$.
\item If $Q_\bullet\to M$ is a projective resolution of $M$, then the chain map $Q_\bullet\to P_\bullet$ lifting the identity map on $M$ is surjective in each degree $q\geq 0$.
\item If $Q_\bullet\to M$ is a projective resolution of $M$, then the chain map $P_\bullet\to Q_\bullet$ lifting the identity map on $M$ is injective in each degree $q\geq 0$.
\end{enumerate}
\end{Prop}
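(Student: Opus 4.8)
The plan is to prove the five conditions equivalent by establishing the chain $(1)\iff(3)\iff(2)$ and then $(1)\iff(4)$ and $(1)\iff(5)$ separately, following the standard argument for group algebras essentially verbatim, since nothing in it uses more than finite-dimensionality of $A$ and $M$. Throughout I would rely on the basic fact that, because $P_\bullet\to M$ is a projective resolution, $\Ext^q_A(M,S)\cong H^q(\Hom_A(P_\bullet,S))$ for every module $S$ and all $q\geq 0$, where the coboundary $d^*$ on $\Hom_A(P_\bullet,S)$ is precomposition with the differential $d\colon P_{q+1}\to P_q$.

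First I would treat $(1)\iff(3)$. The key observation is that for a \emph{simple} module $S$ any homomorphism $P_q\to S$ kills $\rad(P_q)$, so $\Hom_A(P_q,S)=\Hom_A(P_q/\rad(P_q),S)$. If $(1)$ holds, then $d(P_{q+1})\subseteq\rad(P_q)$, so every $\varphi\colon P_q\to S$ satisfies $\varphi\circ d=0$, giving $(3)$. Conversely, if $d(P_{q+1})\not\subseteq\rad(P_q)$, then the image of $d$ projects nontrivially onto the semisimple module $P_q/\rad(P_q)$, so there is a simple quotient $\varphi\colon P_q\to S$ with $\varphi\circ d\neq 0$, contradicting $(3)$; hence $(3)$ implies $(1)$. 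Then $(3)\iff(2)$ is a dimension count: writing $d^*_q\colon\Hom_A(P_q,S)\to\Hom_A(P_{q+1},S)$, one has $\dim_K\Ext^q_A(M,S)=\dim_K\ker d^*_q-\dim_K\operatorname{im}d^*_{q-1}$ while $\dim_K\Hom_A(P_q,S)=\dim_K\ker d^*_q+\dim_K\operatorname{im}d^*_q$. If every $d^*_q=0$ (condition $(3)$) these agree, giving $(2)$; conversely, equality for all $q$ forces $\dim_K\operatorname{im}d^*_{q-1}+\dim_K\operatorname{im}d^*_q=0$, so every coboundary vanishes. Finite-dimensionality of the $\Hom$-spaces is what makes this count legitimate.

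The main work, and the step I expect to be the real obstacle, is the equivalence of $(1)$ with the comparison conditions $(4)$ and $(5)$, since these quantify over \emph{all} resolutions $Q_\bullet$ and the lifting chain maps are only unique up to homotopy. The crux is the lemma that if $P_\bullet$ is minimal, then any chain endomorphism $u\colon P_\bullet\to P_\bullet$ homotopic to the identity is an isomorphism in each degree. To see this, write $\mathrm{id}-u=dh+hd$; since $d(P_{q+1})\subseteq\rad(P_q)$ and module homomorphisms carry radicals into radicals, both $dh$ and $hd$ map $P_q$ into $\rad(P_q)$, so $u\equiv\mathrm{id}\pmod{\rad(P_q)}$ in each degree. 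Hence $u$ is surjective by Nakayama's lemma, and bijective as $P_q$ is finite-dimensional.

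Granting this lemma, for any resolution $Q_\bullet$ the comparison theorem yields lifts $f\colon Q_\bullet\to P_\bullet$ and $g\colon P_\bullet\to Q_\bullet$ of $\mathrm{id}_M$ with $fg\simeq\mathrm{id}_{P_\bullet}$; the lemma makes $fg$ a degreewise isomorphism, so $f$ is a split surjection (giving $(4)$) and $g$ a split injection (giving $(5)$) in each degree. For the converses I would take $Q_\bullet$ to be the minimal resolution $P'_\bullet$: condition $(4)$ makes the lift $f\colon P'_\bullet\to P_\bullet$ surjective, while $gf$ is a degreewise isomorphism by the lemma applied to the minimal $P'_\bullet$, forcing $f$ injective as well, so $f$ is a degreewise—hence chain—isomorphism and $P_\bullet\cong P'_\bullet$ is minimal; the argument from $(5)$ is symmetric. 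The only subtlety to watch is that the lemma must be invoked for whichever of $P_\bullet$, $P'_\bullet$ is known to be minimal, and that, because the lift is well-defined up to homotopy, the split property is independent of the chosen lift so that the statements of $(4)$ and $(5)$ are well-posed.
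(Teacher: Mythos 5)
Your proof is correct and is essentially the argument the paper itself relies on: the paper offers no proof of its own but defers to \cite[Proposition~3.2.3]{cohomologyringbook}, whose standard argument --- the radical characterization of minimality for $(1)\Leftrightarrow(3)$, the dimension count for $(2)\Leftrightarrow(3)$, and the key lemma (via Nakayama and the nilpotence of $\rad(A)$) that a chain endomorphism of a minimal resolution homotopic to the identity is a degreewise isomorphism, combined with the comparison theorem for $(4)$ and $(5)$ --- is exactly what you have reproduced, and your observation that the proof uses nothing beyond finite-dimensionality of $A$ and $M$ is precisely the paper's justification for stating it in this generality. The only points left implicit (that the terms of a minimal resolution are finite-dimensional, which your injectivity step uses, and that $\Ext^q_A(M,S)$ is finite-dimensional, which legitimizes the dimension count under hypothesis $(2)$) are standard and do not constitute gaps.
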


 If $B$ is a CW left regular band, then there is a regular CW complex $\Sigma(B)$ with face poset $B$.  Moreover, it is shown in~\cite{ourmemoirs} that there is an action of $B$ on $\Sigma(B)$ by regular cellular mappings (mappings that send open/closed $n$-cells onto open/closed $m$-cells with $m\leq n$) which induces the original action of $B$ on the face poset.  It is proved in~\cite[Corollary~5.32]{ourmemoirs} that the augmented cellular chain complex $C_{\bullet}(\Sigma(B),K)\to K$ is the minimal projective resolution of $K$.   More generally, it is shown that each projective  $KB_{\geq X}$-module inflates along the projection $KB\to KB_{\geq X}$ to a projective $KB$-module and the augmented cellular chain complex $C_{\bullet}(\Sigma(B_{\geq X}),K)\to K$ is the minimal projective resolution of the one-dimensional simple $KB$-module with apex $X$.  We now shall extend this to CW left regular bands of abelian groups over algebraically closed fields of good characteristic.  One could use these resolutions to prove directly that the algebra of a CW left regular band of abelian groups is Koszul, as was done for CW left regular bands in~\cite[Theorem~7.12]{ourmemoirs}, but we shall not do so here.

 \begin{Prop}\label{p:map.to.band}
 Let $M$ be a left regular band of groups with left regular band of idempotents $B$.  Then $\psi\colon M\to B$ given by $\psi(m)=m^{\omega}$ is a surjective  homomorphism fixing $B$ such that $mem^\dagger =\psi(m)e$ for all $m\in M$ and $e\in B$.  If $K$ is a field whose characteristic does not divide the order of any maximal subgroup of $M$, then $KB$ inflates to a projective $KM$-module under $\psi$.
 \end{Prop}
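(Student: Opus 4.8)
The first four assertions are immediate from Proposition~\ref{p:lrbg.pi}. Condition~(2) there states that $(mn)^\omega=m^\omega n^\omega$, which is exactly the statement that $\psi(m)=m^\omega$ is a homomorphism $M\to B$; it fixes $B$ because $e^\omega=e$ for every idempotent $e$, and surjectivity follows since $\psi|_B$ is the identity. The identity $mem^\dagger=\psi(m)e$ is condition~(3) of that proposition verbatim. So the only substantive work is the projectivity of the inflated module $KB$.

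For the projectivity, the plan is to reduce to induced modules. Observe first that the $KM$-action on the inflation of $KB$ factors through the algebra homomorphism $KM\to KB$ induced by $\psi$ (concretely, $m$ acts as $\psi(m)=m^\omega\in B$), so every $KB$-submodule of $KB$ is automatically a $KM$-submodule. Using the orthogonal primitive idempotents $\{\eta_X\}_{X\in\Lambda(B)}$ of $KB$ from \eqref{eq:defidempotents} together with $1=\sum_X\eta_X$, I would decompose the inflated module as $KB=\bigoplus_{X\in\Lambda(B)}KB\eta_X$, now a decomposition of $KM$-modules. It therefore suffices to show that each inflated summand $KB\eta_X$ is a projective $KM$-module.

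The main step is to identify the inflated module $KB\eta_X$ with the induced module $\Ind_{e_X}(K)$, where $K$ denotes the trivial $KG_{e_X}$-module. Applying Proposition~\ref{p:schutz.proj} to the band $B$ (whose maximal subgroups are trivial) gives $KB\eta_X\cong KL^B_{e_X}$ with $K$-basis $\{b\eta_X\mid b\in B\cap L_{e_X}\}$ indexed by the idempotents $\mathscr L$-equivalent to $e_X$. On the other side, $KL_{e_X}$ is free as a right $KG_{e_X}$-module with the $\mathscr R$-classes as orbits, each orbit containing a unique idempotent of $B\cap L_{e_X}$, so $\Ind_{e_X}(K)=KL_{e_X}\otimes_{KG_{e_X}}K$ has basis $\{b\otimes 1\mid b\in B\cap L_{e_X}\}$. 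I would define $\Phi(b\otimes 1)=b\eta_X$ and check $KM$-equivariance. The decisive computation is that, for $b\in B\cap L_{e_X}$ and $m\in M$, one has $\sigma(mb)=\sigma(m)\wedge X$, so $mb\in L_{e_X}$ if and only if $m\in M_{\geq X}$; in that case the idempotent in the $\mathscr R$-class of $mb$ is $(mb)^\omega=m^\omega b^\omega=m^\omega b$ (using that $\psi$ is a homomorphism and $b$ is idempotent). Hence the induced action sends $b\otimes 1\mapsto m^\omega b\otimes 1$, matching $m\cdot(b\eta_X)=m^\omega b\eta_X$. When $m\notin M_{\geq X}$ both sides vanish: on the inflation side because $m^\omega b\notin B_{\geq X}$ forces $m^\omega b\eta_X=0$ by Proposition~\ref{p:primidempotentprops}(1).

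Finally, since the characteristic of $K$ divides no $|G_{e_X}|$, the group algebra $KG_{e_X}$ is semisimple by Maschke's theorem, so the trivial module $K$ is its own projective cover; thus $\Ind_{e_X}(K)$ is projective by Corollary~\ref{c:proj.cover}. Summing over $X\in\Lambda(B)$ then shows that the inflated $KB$ is projective, completing the argument. I expect the module identification of the previous paragraph to be the main obstacle, precisely because one must correctly reconcile the two left actions — inflation multiplies by $m^\omega$, whereas induction multiplies by $m$ and then collapses the result modulo its $G_{e_X}$-orbit — and must carefully account for the basis vectors that are sent to zero.
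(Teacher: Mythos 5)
Your proof is correct, and its core strategy coincides with the paper's: break the inflated module $KB$ into pieces indexed by $\Lambda(B)$, identify the piece at $X$ with the induced module $\Ind_{e_X}(K)$ of the trivial $KG_{e_X}$-module, and conclude projectivity from Corollary~\ref{c:proj.cover} together with Maschke's theorem. The difference is in the decomposition step. The paper filters $KB$ by the ideals coming from a principal series of $B$, so its pieces are the successive quotients $KL'_{e_X}$ (the $\mathscr L$-classes of $B$), and projectivity of $KB$ then follows from the fact that a module filtered by projectives is projective, as in Proposition~\ref{p:contract.proj}. You instead use Saliola's orthogonal primitive idempotents $\eta_X$ and Proposition~\ref{p:schutz.proj} applied to the band $B$ itself to obtain an honest direct sum decomposition $KB=\bigoplus_{X\in\Lambda(B)}KB\eta_X$ of $KM$-modules, with $KB\eta_X\cong KL'_{e_X}$. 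Your route avoids the filtration lemma and exhibits the inflation of $KB$ directly as a direct sum $\bigoplus_{X}\Ind_{e_X}(K)$ of projectives, while the paper's filtration is quicker to state given the machinery it has already set up. Your explicit equivariance check is also more detailed than the paper's: the computation $(mb)^\omega=m^\omega b$ for $b\in B\cap L_{e_X}$ and $m\in M_{\geq X}$ --- which is exactly where the left-regular-band-of-groups hypothesis, via Proposition~\ref{p:lrbg.pi}, enters --- together with the observation that both actions vanish when $m\notin M_{\geq X}$, spells out what the paper compresses into the single sentence that $\psi$ carries the right $G_X$-orbits of $L_{e_X}$ bijectively to $L'_{e_X}$.
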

 \begin{proof}
Trivially, $\psi$ fixes $B$.  It is a monoid homomorphism by  Proposition~\ref{p:lrbg.pi}.  Moreover, $mem^\dagger =m^{\omega}e=\psi(m)e$ by Proposition~\ref{p:lrbg.pi}.

Assume now that the characteristic of $K$ does not divide the order of any maximal subgroup of $M$.
Let $L_{e_X}$ denote the $\mathscr L$-class of $e_X$ in $M$ and $L'_{e_X}$ denote the $\mathscr L$-class of $e_X$ in $B$ for $X\in \Lambda(B)$ and $e_X\in B$ with $X=Be_X$.  Then the module $KB$ is filtered by the modules $KL'_{e_X}$, and so it suffices to show that $KL'_{e_X}$ is a projective $KM$-module.  But notice that $\psi$ takes the right $G_X$-orbits of $L_{e_X}$ bijectively to $L'_{e_X}$ (since each $G_X$-orbit contains a unique idempotent),  and hence we have an isomorphism $\Ind_{e_X}(K) = KL_{e_X}\otimes_{KG_{e_X}}K\to KL'_{e_X}$.  But $\Ind_{e_X}(K)$ is projective by Corollary~\ref{c:proj.cover} since $K$ is a projective $KG_{e_X}$-module.
\end{proof}

Note that Proposition~\ref{p:map.to.band} implies that the action of $M$ on $B$ from Proposition~\ref{p:identify.poset} is inflated from the left action of $B$ on itself via $\psi$.

We now state our next main result, which applies, in particular, to Hsiao's monoid of ordered $G$-partitions when $G$ is abelian.

\begin{Thm}\label{t:min.resolutions}
Let $M$ be a CW left regular band of abelian groups with CW left regular band of idempotents $B$ and $K$ an algebraically closed field whose characteristic does not divide the order of any maximal subgroup.  Fix $e_X\in B$, for each $X\in \Lambda(B)$, with $Be_X=X$ and let $G_X=G_{e_X}$.  Let $V$ be a simple $KG_X$-module (which we then inflate to a simple $KM$-module).  Then $C_{\bullet}(\Sigma(B_{\geq X}),V)\to V$ is the minimal projective resolution of $V$ over $KM$, where we view $C_q(\Sigma(B_{\geq X}), K)$ as a $KM$-module via inflation along the projection $\psi\colon M\to B$ and $C_q(\Sigma(B_{\geq X}),V)=C_q(\Sigma(B_{\geq X}),K)\otimes_K V$ as a tensor product of $KM$-modules.
\end{Thm}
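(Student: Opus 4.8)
The plan is to verify the three defining features of a minimal projective resolution in turn — projectivity of each chain module, exactness, and minimality — deducing the last from Proposition~\ref{p:minresolution}. I would begin by pinning down the $KM$-module structure of the chain groups. As in the proof of~\cite[Corollary~5.32]{ourmemoirs}, the $q$-cells of $\Sigma(B_{\geq X})$ are the elements $b\in B_{\geq X}$ whose cell has dimension $q$, and these are partitioned by their support $\sigma(b)=Y$ into the $\mathscr{L}$-classes $L'_{e_Y}$ of $B$ with $\rk[X,Y]=q$; moreover, since $\sigma(ab)=\sigma(a)\wedge\sigma(b)\le\sigma(b)$, the cellular left action of $B$ carries the span of the cells of support $Y$ into itself. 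Thus, as a $KB$-module,
\[
C_q(\Sigma(B_{\geq X}),K)\cong\bigoplus_{Y:\,\rk[X,Y]=q}KL'_{e_Y},
\]
and inflating along $\psi\colon M\to B$ and applying Proposition~\ref{p:map.to.band}, each summand becomes $\Ind_{e_Y}(K)$ as a $KM$-module.

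Next I would prove a projection formula. Because $Y\ge X$ forces $\p_{e_X}(e_Y)=e_Xe_Y=e_X$, the idempotent $e_Y$ acts as the identity on $V$, so $e_YV=V$, and as a $KG_Y$-module this is $V$ with $G_Y$ acting through $\rho_X$. The $KM$-linear map $\Ind_{e_Y}(U\otimes_K V)\to\Ind_{e_Y}(U)\otimes_K V$ defined by $\ell\otimes(u\otimes w)\mapsto(\ell\otimes u)\otimes\ell w$ is well defined and $M$-equivariant, and it is bijective because every $\ell\in L_{e_Y}$ acts invertibly on $V$ via $\rho_X$ (giving the explicit inverse $(\ell\otimes u)\otimes v\mapsto\ell\otimes(u\otimes\rho_X(\ell)^{-1}v)$). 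Taking $U=K$ yields
\[
C_q(\Sigma(B_{\geq X}),V)=C_q(\Sigma(B_{\geq X}),K)\otimes_K V\cong\bigoplus_{Y:\,\rk[X,Y]=q}\Ind_{e_Y}(e_YV).
\]
Since $\mathrm{char}\,K$ divides no $|G_Y|$, each $e_YV$ is a projective $KG_Y$-module, so each $\Ind_{e_Y}(e_YV)$ is a projective $KM$-module by Corollary~\ref{c:proj.cover}, and every $C_q(\Sigma(B_{\geq X}),V)$ is projective. Exactness is immediate: $\Sigma(B_{\geq X})$ is a ball, hence contractible, so the augmented complex $C_\bullet(\Sigma(B_{\geq X}),K)\to K$ is acyclic, and tensoring with $V$ over the field $K$ preserves exactness. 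Thus $C_\bullet(\Sigma(B_{\geq X}),V)\to V$ is a projective resolution.

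For minimality I would invoke Proposition~\ref{p:minresolution}: it suffices to show $\dim_K\Hom_{KM}(C_q(\Sigma(B_{\geq X}),V),S)=\dim_K\Ext^q_{KM}(V,S)$ for every simple $KM$-module $S$ and every $q$, since for any resolution $\Ext^q(V,S)$ is a subquotient of $\Hom(P_q,S)$ and equality throughout forces the coboundary maps to vanish. Fix $S$ with apex $Z$ and associated $KG_Z$-module $S_Z$. By the standard adjunction $\Hom_{KM}(\Ind_{e_Y}(U),S)\cong\Hom_{KG_Y}(U,e_YS)$ of~\cite{repbook} together with the fact that $\Ind_{e_Y}(e_YV)$ is projective with top supported on the apex $Y$, the $Y$-indexed term vanishes unless $Y=Z$, in which case it equals $\Hom_{KG_Z}(e_ZV,S_Z)$. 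Summing over the $Y$ with $\rk[X,Y]=q$ gives $\Hom_{KG_Z}(e_ZV,S_Z)$ when $X\le Z$ and $q=\rk[X,Z]$, and $0$ otherwise — which is precisely $\Ext^q_{KM}(V,S)$ as computed in Theorem~\ref{t:first.stab} in good characteristic, since there $V$ is regarded as a $KG_Z$-module through $\rho_X$, namely $e_ZV$. Minimality follows.

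The main obstacle I anticipate lies in the first two steps: correctly identifying the $KB$-module structure of $C_q(\Sigma(B_{\geq X}),K)$ — in particular reconciling the orientation signs in the regular cellular action with the identification of each support-$Y$ block with $KL'_{e_Y}$, as handled in~\cite{ourmemoirs} — and verifying that the projection-formula map is genuinely bijective. Both hinge on the single observation that $e_Y$ acts as the identity on $V$ whenever $Y\ge X$; once this is secured, projectivity, exactness, and the $\Hom$ computation are routine.
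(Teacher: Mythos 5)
Your proposal is correct, but it takes a genuinely different route from the paper's proof. The paper first reduces to the case where $X$ is the minimum of $\Lambda(B)$ via Proposition~\ref{p:contract.proj}, then quotes~\cite[Corollary~5.32]{ourmemoirs} together with Proposition~\ref{p:map.to.band} to get that $C_\bullet(\Sigma(B),K)\to K$ is already the minimal projective resolution of the trivial simple module \emph{over $KM$}; it then twists by $V$, using that $-\otimes_K V$ is exact and preserves projectivity when $V$ is inflated from the group completion~\cite[Theorem~4.4]{affinetoprep}, and verifies criterion (2) of Proposition~\ref{p:minresolution} abstractly via the natural isomorphisms $\Ext^q_{KM}(V,W)\cong \Ext^q_{KM}(K,\Hom_K(V,W))$ and $\Hom_{KM}(C_q\otimes_K V,W)\cong \Hom_{KM}(C_q,\Hom_K(V,W))$ — no decomposition of the chain modules and no appeal to Theorem~\ref{t:first.stab} is needed. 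You instead work at general $X$, decompose each chain module by support into Schützenberger modules, prove a projection formula to identify $C_q(\Sigma(B_{\geq X}),V)\cong\bigoplus_{\rk[X,Y]=q}\Ind_{e_Y}(e_YV)$, and match dimensions of $\Hom_{KM}(C_q,S)$ against the $\Ext$ computation of Theorem~\ref{t:first.stab}. Your approach is more computational but yields strictly more explicit information: it exhibits each $C_q$ as the direct sum of exactly those projective covers predicted by the $\Ext$ computation, whereas the paper's bootstrap is shorter and independent of Theorem~\ref{t:first.stab}. One caveat: the ``standard adjunction'' $\Hom_{KM}(\Ind_{e_Y}(U),S)\cong\Hom_{KG_Y}(U,e_YS)$ you invoke is not valid in general — the functor $W\mapsto e_YW$ is right adjoint to $KMe_Y\otimes_{e_YKMe_Y}(-)$, not to $\Ind_{e_Y}=KL_{e_Y}\otimes_{KG_Y}(-)$; already for a band and a simple $S$ with apex $Z<Y$ one has $e_YS\neq 0$ while $\Hom_{KM}(\Ind_{e_Y}(K),S)=0$. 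Fortunately this misstatement is harmless in your argument, because the other fact you cite (that $\Ind_{e_Y}(e_YV)$ is, by Corollary~\ref{c:proj.cover}, the projective cover of the simple with apex $Y$ and underlying $KG_Y$-module $e_YV$, so that $\Hom_{KM}(\Ind_{e_Y}(e_YV),S)\cong\Hom_{KM}(\mathrm{top},S)$) gives exactly the vanishing and one-dimensionality you need; I recommend deleting the adjunction and resting the step on the projective-cover property alone.
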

\begin{proof}
By Proposition~\ref{p:contract.proj}, we may assume without loss of generality that $X$ is the minimum element of $\Lambda(B)$, as a minimal projective resolution of $V$ over $KM_{\geq X}$ (which is a CW left regular band of abelian groups) then inflates to one over $KM$.
First note that since $KB$ is a projective $KM$-module via inflation along $\psi$ by Proposition~\ref{p:map.to.band}, we deduce from~\cite[Corollary~5.32]{ourmemoirs} that $C_{\bullet}(\Sigma(B),K)\to K$ is the minimal projective resolution of the trivial $KM$-module.  Then $C_{\bullet}(\Sigma(B),V)\to V$ (with the above described module structure) is a projective resolution of $V$  since tensoring with $V$ is exact and preserves projectivity by~\cite[Theorem~4.4]{affinetoprep}, as $V$ is inflated from a $KG(M)$-module by Proposition~\ref{p:completeness}.    We now check (2) of Proposition~\ref{p:minresolution}.  Let $W$ be a simple $KM$-module.  Note that $\dim_K V=1=\dim_K W$.   We have that $\Ext^q_{KM}(V,W) \cong \Ext^q_{KM}(K,\Hom_K(V,W))$ by~\cite[Corollary~4.5]{affinetoprep}.  Observe that $\Hom_K(V,W)$ is one-dimensional, and hence simple.  Thus $\Ext^q_{KM}(K,\Hom_K(V,W))\cong \Hom_{KM}(C_q(\Sigma(B),K),\Hom_K(V,W))$ by (2) of Proposition~\ref{p:minresolution} applied to the minimal resolution $C_{\bullet}(\Sigma(B),K)\to K$.  But $\Hom_{KM}(C_q(\Sigma(B),K),\Hom_K(V,W))\cong \Hom_{KM}(C_q(\Sigma(B),K)\otimes_K V,W)$ by~\cite[Theorem~4.4]{affinetoprep}.  Thus $\Ext^q_{KM}(V,W)\cong \Hom_{KM}(C_q(\Sigma(B),V),W)$, as required. This completes the proof.
\end{proof}

\def\malce{\mathbin{\hbox{$\bigcirc$\rlap{\kern-7.75pt\raise0,50pt\hbox{${\tt
  m}$}}}}}\def\cprime{$'$} \def\cprime{$'$} \def\cprime{$'$} \def\cprime{$'$}
  \def\cprime{$'$} \def\cprime{$'$} \def\cprime{$'$} \def\cprime{$'$}
  \def\cprime{$'$} \def\cprime{$'$}

\end{document}